\newtheorem{theo}{Theorem}[section]
\newtheorem{lemma}{Lemma}[section]
\newtheorem{corol}{Corollary}[section]
\newtheorem{prop}{Proposition}[section]
\newtheorem{es}{Example}[section]
\newtheorem{rem}{Remark}[section]
\def\RR{\vbox {\hbox{I\hskip-2.1pt R\hfil}}}
\def\PP{\vbox {\hbox{I\hskip-2.1pt P\hfil}}}
\def\C{\mathcal{C}}
\newcommand{\ga}{\gamma}
\newcommand{\de}{\delta}
\newcommand{\be}{\beta}
\newcommand{\la}{\lambda}
\newcommand{\al}{\alpha}
\newcommand{\vphi}{\varphi}
\newcommand{\vgd}{u}
\newcommand{\vgdf}{u\vphi}
\newcommand{\vgdfy}{(u\vphi)(y)}
\newcommand{\vv}{u \rho}
\newcommand{\vgdal}{u \rho }
\def\v{\rho}
\def\iv{\rho^{-1}}
\newcommand{\nequiv}{{\ \equiv{\hspace{-.34cm}\slash} \ }}
\newcommand{\sgn}{{\rm sgn}}
\newcommand{\diag}{{\rm diag}}
\newcommand{\cond}{{\rm cond}}
\newcommand{\beqns}{\begin{eqnarray*}}
\newcommand{\beqn}{\begin{eqnarray}}
\newcommand{\beq}{\begin{equation}}
\newcommand{\eeqns}{\end{eqnarray*}}
\newcommand{\eeqn}{\end{eqnarray}}
\newcommand{\eeq}{\end{equation}}
\begin{document}


\title{Quadrature methods for
integro-differential equations of Prandtl's type in weighted spaces of continuous functions}

\author{Maria Carmela De Bonis and Donatella Occorsio}

%
\maketitle

\begin{abstract}
The paper deals with the approximate solution of integro-differential equations of Prandtl's type.
Quadrature methods involving ``optimal'' Lagrange interpolation processes are proposed and conditions under which they are stable and convergent in suitable weighted spaces of continuous functions are proved.

The efficiency of the method has been tested by  some numerical experiments,  some of them including comparisons with other numerical procedures. In particular,  as an application, we have implemented the method for solving Prandtl's equation governing the circulation air flow along the contour of a plane wing profile, in the case of elliptic or rectangular wing-shape.
\end{abstract}

{\bf Keywords}: Hypersingular integral equation, Lagrange interpolation, quadrature method, Prandtl's integral equation.

{\bf MSC[2010]} 45E05;  65R20; 41A05


\section{Introduction}
Hypersingular Integro-Differential Equations (IDE) find application in the treatment of
  many  physics and engineering problems  (for instance, see \cite{vainikko}, \cite{muske}, \cite{mkhi}, \cite{aruty}, \cite{smirnov} \cite{MoPe} and the references therein).
In particular, the IDE of Prandtl's type
\begin{equation}\label{iniziale1}
\sigma(y)\zeta(y)+a \zeta'(y)+\frac b\pi\int_{-1}^1\frac{\zeta'(x)}{x-y}dx+\frac 1\pi\int_{-1}^1 \bar k(x,y)\zeta(x)dx=g(y), \  y\in (-1, 1),
\end{equation}
with $\sigma(y), \bar k(x,y)$ and $g(y)$ given functions, the constants $a, b\in \RR$  s.t. $a^2+b^2=1$, and the unknown solution $\zeta$ is a differentiable function,
satisfying the zero boundary condition
\begin{equation}\label{ucond}\zeta(-1)=\zeta(1)=0,\end{equation}
is well-known  in aerodynamics. In fact, the solution $\zeta$ can represent the circulation distribution of air flow along the contour of a wing profile (see, for instance, \cite{Pra}, \cite{mkhi}, \cite{dragos1}, \cite{dragos2}, \cite{Gol} and the references therein). (Some experiments concerned with this application will be proposed in Section 4.)

Taking into account the zero boundary condition \eqref{ucond}, the solution  $\zeta$ is conveniently represented as the product of a smooth function $f$ for a Jacobi weight, i.e.
\begin{equation}\label{zetaesp}\zeta(x)=f(x)v^{\al,\be}(x),\ \ v^{\al,\be}(x)=(1-x)^\al (1+x)^{\be}, \ \al,\be> 0.\end{equation}

Several authors have studied this kind of IDEs and  introduced numerical methods for approximating their solutions (see \cite{vekua,kalandiya, golberg, CaCriJu,Calio, CaCriJuLu} and the references therein), mainly  in the  case $\al=\be=\frac 12$.
In \cite{CaCriJu} and \cite{CaCriJuLu}, when $\sigma\equiv 0$, the equation has been also considered in the more general case $0<\al<1,\ \be=1-\al$. In particular in  \cite{CaCriJu} the authors introduced collocation and quadrature methods based on Jacobi zeros studying  stability and convergence in weighted $L^2$ spaces and in \cite{CaCriJuLu} a regularized version of \eqref{iniziale1} has been investigated in a scale of pairs of weighted  Besov spaces.

Here, we consider the equation \eqref{iniziale1} both  for $\sigma \nequiv \ 0$, $\al=\be=\frac 12$ and for $\sigma\equiv 0$, $0<\al<1,\ \be=1-\al$. In both  cases we seek the solution in a couple of weighted Zygmund-type spaces equipped with uniform norm. Two quadrature methods which make use of optimal Lagrange interpolation processes are proposed  and for them we determine conditions assuring stability and convergence. The error estimates in weighted uniform norm and the conditioning  of the final linear systems are studied. Finally, some  numerical tests, which confirm the  agreement among the theoretical estimates with the numerical results, are provided.

The plan of the paper is the following. Next section contains some basic results and notation used along the paper. In Section 3 the numerical procedures are described  and the results about their stability and convergence are stated. Section 4 contains some numerical tests to show the efficiency  of the proposed  procedure, some of them in comparison with other ones. In Section 5 the proofs of the main results are given, while Section 6  contains conclusions and a brief discussion on the numerical experiments.

\section{Preliminaries}
From now on the following setting will be used along all the paper:
\begin{equation}\label{defpesi}u=v^{\ga,\de}, \ga, \de\ge 0,\quad w=v^{1-\al,\al},\quad \rho=v^{\al, 1-\al},\quad 0<\al<1.\end{equation}
Moreover the constant $\C$ will be used several times, having different meaning in different formulas.  We will write $\C \neq \mathcal{C}(a,b,\ldots)$  to say that $\C$ is a positive constant independent of the parameters $a,b,\ldots$, and $\C = \C(a,b,\ldots)$ to say that
$\mathcal{C}$ depends on $a,b,\ldots$. If $A,B \geq 0$ are quantities depending on some parameters, we will write $A \sim B,$ if there exists a constant $0<\C\neq \C(A,B)$  such that
\[\frac{B}{\C} \leq A  \leq \C B. \]
$\PP_m$ will denote the space of the algebraic polynomials of degree at most $m$. For a bivariate function $k(x,y)$ we  use $k_x$ (or $k_y$) to regard  $k$ as  function of the only variable $y$ (or $x$).

Many properties holding for FP integrals  can be found in \cite{kutt}, \cite{monegato}  (see also \cite{DBOLagos} and the references therein). Here we recall \cite[Lemma 6.1, Cap II]{miklin}

\[\frac{d} {dy}\int_{-1}^1\frac{g(x)}{x-y}dx=\int_{-1}^1\frac{g'(x)}{x-y}dx-\frac{g(-1)}{1+y}-\frac{g(1)}{1-y},\quad -1<y<1,\]
holding if $g$ has a  generalized derivative $g'\in L_p(-1,1),$ for some $p>1$.
Then, under the zero endpoints  conditions \eqref{ucond}-\eqref{zetaesp} with $\be=1-\al$, equation \eqref{iniziale1} can be rewritten as
\[(M_{\sigma\v}+DA^{\rho}+K+H)f(y)=g(y),\]
where
\[(M_{\sigma\v} f)(y)= (\sigma\v f)(y), \quad (D q)(y)=\frac{d} {dy}q(y),\]
\[ (A^{\rho}f)(y)=a (f\v)(y)+\frac b\pi\int_{-1}^1\frac{(f\v)(x)}{x-y}dx,\]
\[ (Kf)(y)=\frac 1{\pi}\int_{-1}^1\!\! k(x,y)(f\v)(x)dx, \quad (Hf)(y)=\frac 1{\pi}\int_{-1}^1\!\! h(x,y)(f\v)(x)dx, \]
with $\sigma(y)$ a given function, $k(x,y)$ and $h(x,y)$ smooth and weakly singular kernels, respectively, such that $\bar k $ in (\ref{iniziale1})
satisfies $\bar k(x,y)=k(x,y)+h(x,y).$
The  Fredholm index of the Cauchy singular integral operator $A^{\rho}:L^2_\rho\to L^2_\rho$ is equal to $-1$ if $a=\cos{(\pi \al)}, \ b=-\sin{(\pi\al)}$ (see, for instance, \cite{PSbook}). Here, $L^2_\rho$ is the Hilbert space defined by the inner product
\begin{equation}
\label{inner}<f,g>_\rho=\int_{-1}^1f(x)\overline{g(x)}\rho(x)dx.
\end{equation}

\subsection{Function spaces}
We consider the space of functions
\[
C_{u} = \begin{cases}\left\{f\in C^0 ((-1,1)) \ : \ \lim_{x\to \pm 1^\mp}(f u)(x)=0\right\}, &  \ga>0, \de>0\\
\left\{f\in C^0 ((-1,1]) \ : \ \lim_{x\to -1^+}(f u)(x)=0\right\}, & \ga=0, \de>0\\
\left\{f\in C^0 ([-1,1)) \ : \ \lim_{x\to 1^-}(f u)(x)=0\right\}, & \ga>0, \de=0\\
C^0 ([-1,1]), & \ga=\de=0
\end{cases},
\]
equipped  with the norm
\[
    \|f\|_{C_{u}}:=  \|fu\|_\infty=\max_{|x|\le 1}\left|(fu)(x) \right|.
\]
Somewhere, for brevity, we will set $\|f\|_A:=\max_{x\in A}|f(x)|$.

Note that the limit conditions are necessary for the validity of the Weierstrass theorem in $C_u$.
Then, denoting by
$$E_m(f)_{u}=\inf_{P_m\in \PP_m}\|(f-P_m)u\|_\infty$$
the error of best polynomial approximation of $f\in C_{u}$ by means of polynomials of degree at most $m$, we have \cite[p. 172 (2.5.23)]{mastromilo}
\begin{equation}
\label{weie}\lim_m E_m(f)_u=0.
\end{equation}

Setting $\vphi(x)=\sqrt{1-x^2}$, for any $f\in C_{u}$  and for an integer $k\ge 1$, we consider the main part of the \textit{$\varphi$-modulus of smoothness} \cite[p. 90]{DT}
\begin{equation}
\label{mainMod}\Omega_{\vphi}^k(f,t)_{u}=\sup_{0<\tau\leq t}\|u \Delta_{\tau\vphi}^k f\|_{I_{k\tau}}, \quad I_{k\tau}=[-1+(2k\tau)^2,1-(2k\tau)^2],
\end{equation}
where
\[\Delta_{\tau\vphi}^k f(x)=\sum_{i=0}^k (-1)^i \binom k i f\left(x+\frac{\tau\vphi(x)}2(k-2i)\right).\]

By means of $\Omega_{\vphi}^k(f,t)_{u}$, we define the Zygmund
space of order $r \in \RR, r > 0,$
\[Z_{r,k}(u)=\left\{f\in C_{u} \ : \ \sup_{t> 0}\frac{\Omega_{\vphi}^k(f,t)_{u}}{t^r} <+\infty\right\}, \quad k\geq r,\]
endowed with the  norm
\begin{eqnarray}
\|f\|_{Z_{r,k}(u)}&=&\|f u\|_\infty +  \sup_{t> 0}\frac{\Omega_{\vphi}^k(f,t)_{u}}{t^r}.\label{norm1}
\end{eqnarray}
The following equivalence holds true (see, for instance, \cite[p. 172]{mastromilo})
\begin{eqnarray}
\sup_{t> 0}\frac{\Omega_{\vphi}^k(f,t)_{u}}{t^r}\sim  
\sup_{i\geq 0}(1+i)^{r} E_i(f)_{u}, \label{norm2}
\end{eqnarray}
where the constants in ``$\sim$'' depends on $r$. Such norms equivalence ensures that the definition of the Zygmund space doesn't depend on $k\geq r$ and therefore we will set $Z_{r}(u):=Z_{r,k}(u).$

When $r$ is a positive integer,  we define the Sobolev space
\[W_r(u)=\left\{f\in C_u: f^{(r-1)}\in AC(-1,1),\quad \|f^{(r)}\varphi^r u\|_\infty<\infty\right\},\]
where $AC(-1,1)$ denotes the set of the functions which are absolutely continuous on every closed subinterval of $(-1,1)$, equipped with the  norm
\[\|f\|_{W_r(u)}=\|f u\|_\infty + \|f^{(r)}\vphi^r u\|_\infty.\]


In order to estimate $E_m(f)_u$ we recall the Favard inequality (see, for instance, \cite[p. 172]{mastromilo})
\begin{equation}
\label{Favard}E_m(f)_u\leq \frac{\C}{m^r} \|f\|_{Z_r(u)}, \quad \forall f\in Z_r(u),
\end{equation}
where the constant $\C$ does not depend on $m$ and $f$ but depends on $r$.
Moreover, letting $E_m(f)_{Z_r(u)}=\inf_{P_m\in \PP_m}\|f-P_m\|_{Z_r(u)},$
we recall  \cite[p. 33]{dbmastroparma}
\begin{equation}
\label{Em-est-Zr}E_m(f)_{Z_r(u)}\leq \C \sup_{k\ge 1} k^r E_k(f)_{u}, \quad \C\neq\C(m,f).
\end{equation}
In the sequel we will write $Z_r:=Z_r(v^{0,0})$, $E_m(f)_{v^{0,0}}:=E_m(f)$, and $Z_0(u)= W_0(u)=C_u.$

\subsection{Lagrange interpolation}

For a given Jacobi weight  $\theta=v^{\alpha,\beta}, \ \alpha,\beta>-1$,  let $\{p_m^{\theta}\}_{m=0}^\infty$ be the corresponding sequence of  orthonormal polynomials with positive leading coefficients and let $\{\la_{m,k}^{\theta}\}_{k=1}^m $ be  the Christoffel numbers w.r.t. $\theta$.
Let  $\rho$ and $w$ be defined in (\ref{defpesi}). Let $L^{w}_m(G,x)$ be the Lagrange polynomial interpolating a given function $G\in C_{u\varphi}$ at
the zeros $\{x_i\}_{i=1}^m$ of  $p_m^{w}$ and let $L^{\rho}_m(G,x)$ be the Lagrange polynomial interpolating  $G\in C_{u\rho}$ at the zeros $\{t_i\}_{i=1}^m$ of $p_m^\rho.$ Following an idea in \cite{La,DBSIam}, we represent $L^{w}_m(G,x)$ in the basis
$$\psi_i^{w}(x)=\frac{\la_{m,i}^{w}\sum_{j=0}^{m-1}p_j^{w}(x_i)p_j^w(x)}{(u\varphi)(x_i)},\quad i=1,2,\dots,m,$$
of $\PP_{m-1}$ and  $L^{\rho}_m(G,x)$ in the basis
$$\psi_i^{\rho}(x)=\frac{\la_{m,i}^{\rho}\sum_{j=0}^{m-1}p_j^{\rho}(t_i)p_j^\rho(x)}{(u\rho)(t_i)},\quad i=1,2,\dots,m,$$
of $\PP_{m-1}$. More precisely, we write
\begin{equation}\label{lag_w}L^{w}_m(G,x)=\sum_{i=1}^m \psi_i^{w}(x) (u\varphi G )(x_i)\end{equation}
and
\begin{equation}\label{lag_rho}L^{\rho}_m(G,x)=\sum_{i=1}^m \psi_i^{\rho}(x) (u \rho G )(t_i).\end{equation}
The choice of these bases is crucial in the study of the conditioning of the linear systems involved in our numerical methods (see Theorems \ref{condiz} and \ref{condiz2}).

Next lemma, a consequence of \cite[Theorem 2.2]{MR}, states the conditions under which the above introduced Lagrange processes are optimal:
\begin{lemma}\label{Lag1} Let $0< \al<1$. If $\ga,\de$ satisfy
\begin{equation}
\label{gade-cond3}
 -\frac \al 2 +\frac 14\leq \ga <  -\frac{\al}2+\frac 54 ,\quad\qquad\quad \frac \al 2 -\frac 14\leq \de < \frac{\al}2+\frac 34,
\end{equation}
then
\begin{equation}
\label{lag1alal}\| L_m^{w}(f)\vgdf\|_{\infty}\leq \C \log m \| f \vgdf\|_{\infty}, \quad \forall f\in C_{\vgdf},
\end{equation}
\begin{equation}
\label{lagal1al}\| L_m^{\rho}(f)u\rho\|_{\infty}\leq \C \log m \| f u\rho \|_{\infty}, \quad \forall f\in C_{u\rho},
\end{equation}
where $\C\neq \C(m,f).$
\end{lemma}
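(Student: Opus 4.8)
\noindent\textit{Proof idea.} The plan is to recognize that \eqref{lag1alal} and \eqref{lagal1al} are exactly statements about weighted Lebesgue constants of Lagrange interpolation at Jacobi zeros, and then to invoke \cite[Theorem 2.2]{MR}. First I would check that the basis polynomials are, up to a normalizing constant, the usual fundamental Lagrange polynomials. Writing $K_m^{w}(x,t)=\sum_{j=0}^{m-1}p_j^{w}(x)p_j^{w}(t)$ for the Christoffel--Darboux kernel, one has $\la_{m,i}^{w}K_m^{w}(x_i,x_i)=1$ and, since $p_m^{w}(x_i)=p_m^{w}(x_k)=0$, $K_m^{w}(x_i,x_k)=0$ for $k\neq i$; hence $\la_{m,i}^{w}K_m^{w}(\cdot,x_i)=\ell_i^{w}$, the $i$-th fundamental Lagrange polynomial on the nodes $\{x_i\}$, and $\psi_i^{w}=\ell_i^{w}/(u\varphi)(x_i)$. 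Consequently \eqref{lag_w} reads $L_m^{w}(G,x)=\sum_{i}\ell_i^{w}(x)G(x_i)$, i.e. $L_m^w(G)$ is the ordinary Lagrange interpolant of $G$ at the zeros of $p_m^w$, and, putting $H=u\varphi\,G$, the left-hand side of \eqref{lag1alal} equals $\|u\varphi\,L_m^{w}\!\big(H/(u\varphi)\big)\|_\infty$. Thus \eqref{lag1alal} asserts precisely that the Lebesgue constant of Lagrange interpolation at the zeros of $p_m^{w}$, relative to the weight $u\varphi$, is $\mathcal{O}(\log m)$; the same reduction, with $w,\varphi,\{x_i\}$ replaced by $\rho,\rho,\{t_i\}$, turns \eqref{lagal1al} into the corresponding statement for $p_m^{\rho}$ and the weight $u\rho$.

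\smallskip
\noindent Next I would recall \cite[Theorem 2.2]{MR}: for interpolation at the zeros of $p_m^{v^{a,b}}$, $a,b>-1$, and a weight $v^{\gamma_0,\de_0}$ with $\gamma_0,\de_0\ge 0$, one has $\|v^{\gamma_0,\de_0}L_m(v^{a,b},F)\|_\infty\le\C\log m\,\|v^{\gamma_0,\de_0}F\|_\infty$ for every $F\in C_{v^{\gamma_0,\de_0}}$ if and only if
\[
\frac{a}{2}+\frac14\le\gamma_0<\frac{a}{2}+\frac54,\qquad\qquad \frac{b}{2}+\frac14\le\de_0<\frac{b}{2}+\frac54 .
\]
It then only remains to specialize. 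Since $\varphi=v^{1/2,1/2}$, we have $u\varphi=v^{\ga+1/2,\,\de+1/2}$ and $u\rho=v^{\ga+\al,\,\de+1-\al}$. For \eqref{lag1alal} apply the criterion with $a=1-\al$, $b=\al$, $\gamma_0=\ga+\frac12$, $\de_0=\de+\frac12$: subtracting $\frac12$ from each chain of inequalities gives exactly \eqref{gade-cond3}. For \eqref{lagal1al} apply it with $a=\al$, $b=1-\al$, $\gamma_0=\ga+\al$, $\de_0=\de+1-\al$: elementary simplification again reproduces \eqref{gade-cond3}. Finally \eqref{gade-cond3} forces $\gamma_0,\de_0>\frac14>0$ in both cases, so the hypothesis $\gamma_0,\de_0\ge 0$ of the criterion is met and the functions $G$ are indeed taken in the spaces $C_{u\varphi}$, $C_{u\rho}$ on which it operates.

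\smallskip
\noindent Since the analytic work is carried by \cite[Theorem 2.2]{MR}, I do not expect a genuine obstacle; the only delicate point is bookkeeping. One must not drop the extra factors $\varphi$, $\rho$ that appear because in \eqref{lag_w}--\eqref{lag_rho} the interpolated function $G$ lives in $C_{u\varphi}$ (resp. $C_{u\rho}$) and not in $C_u$, and one has to verify that the two a priori distinct sets of conditions coming from \cite[Theorem 2.2]{MR} for $L_m^{w}$ and for $L_m^{\rho}$ collapse to the single pair \eqref{gade-cond3}, so that the hypothesis of the lemma stays as clean as stated. It is also worth noting why the upper inequalities in \eqref{gade-cond3} are strict: at the right endpoints the weighted Lebesgue constant is no longer $\mathcal{O}(\log m)$, which is precisely the borderline that singles out the ``optimal'' interpolation processes referred to in the statement.
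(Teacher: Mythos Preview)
Your proposal is correct and follows exactly the route the paper indicates: the paper does not give a detailed proof but simply states that the lemma is ``a consequence of \cite[Theorem 2.2]{MR}'', and your argument is precisely the parameter-checking verification that this citation entails. Your bookkeeping (identifying $\psi_i^w$ with $\ell_i^w/(u\varphi)(x_i)$, computing the exponents of $u\varphi=v^{\ga+1/2,\de+1/2}$ and $u\rho=v^{\ga+\al,\de+1-\al}$, and showing that both specializations of the Mastroianni--Russo criterion collapse to the single pair \eqref{gade-cond3}) is accurate.
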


The following lemma, a special case of \cite[Th.1, p. 680]{Nevai}, will be also useful in the sequel.
\begin{lemma}\label{Nevai} Let $0< \al<1$. If $\ga,\de$ satisfy $0\leq  \ga <  -\frac{\al}2+\frac 34, \ 0\leq \de < \frac{\al}2+\frac 14,$
then, for any $f\in C^0([-1, 1])$,
\[\int_{-1}^1 |L_m^{\rho}(f,x)|u^{-1}(x)dx \leq \C \|f\|_{\infty},\quad \C\neq \C(m,f).\]

\end{lemma}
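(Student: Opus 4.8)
The claim to prove is Lemma \ref{Nevai}, which asserts a weighted $L^1$ bound on the Lagrange operator $L_m^{\rho}$ when the nodes are the zeros of $p_m^{\rho}$ with $\rho = v^{\alpha,1-\alpha}$, and the weight $u^{-1} = v^{-\gamma,-\delta}$ appears on the left. Since the statement is explicitly labelled ``a special case of \cite[Th.1, p.~680]{Nevai}'', the plan is not to reprove Nevai's theorem from scratch but to \emph{specialize} it: identify the general hypotheses of that theorem, match them against the concrete Jacobi data $\rho = v^{\alpha,1-\alpha}$ and the multiplier $u^{-1}$, and verify the parameter inequalities. Concretely, Nevai's theorem gives a necessary and sufficient (or at least sufficient) condition of Szeg\H{o}--Markov type for $\sup_m \int_{-1}^1 |L_m^{\theta}(f,x)|\, U(x)\,dx \le \C\|f\|_\infty$ in terms of the exponents of the interpolation weight $\theta = v^{a,b}$ and of the extra weight $U = v^{c,d}$; the conditions are typically of the form $c > \frac{a}{2} - \frac{3}{4}$ and $d > \frac{b}{2} - \frac{3}{4}$ (together with integrability $c,d > -1$), coming from the behaviour of the Christoffel function and of $p_m^{\theta}$ near $\pm 1$.

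The key steps, in order, would be: (i) write $\theta = \rho = v^{\alpha, 1-\alpha}$, so $a = \alpha$, $b = 1-\alpha$, and write $U = u^{-1} = v^{-\gamma,-\delta}$, so $c = -\gamma$, $d = -\delta$; (ii) from $f \in C^0([-1,1])$ (no weight on $f$), we are in the ``$\|f\|_\infty$ on the right'' regime, so no compatibility between $u$ and the interpolation is needed beyond what Nevai requires; (iii) plug into the endpoint inequalities: at $x = 1$ the condition reads $-\gamma > \frac{\alpha}{2} - \frac{3}{4}$, i.e. $\gamma < -\frac{\alpha}{2} + \frac{3}{4}$, which is exactly the stated upper bound on $\gamma$; at $x = -1$ it reads $-\delta > \frac{1-\alpha}{2} - \frac{3}{4} = -\frac{\alpha}{2} - \frac{1}{4}$, i.e. $\delta < \frac{\alpha}{2} + \frac{1}{4}$, which is exactly the stated upper bound on $\delta$; (iv) the lower bounds $\gamma \ge 0$, $\delta \ge 0$ ensure $u^{-1}$ has no positive spurious singularity and in fact guarantee $u^{-1} \le \C$ is irrelevant — rather they guarantee that $C^0([-1,1]) \subseteq C_{u}$ and that the integral on the left is over a genuine (finite) weight $u^{-1}$ with exponents $\le 0 < 1$, so $u^{-1} \in L^1$ trivially and the remaining Nevai hypotheses (such as $c,d>-1$, automatic here) hold. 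Then invoke \cite[Th.1, p.~680]{Nevai} directly to conclude with a constant $\C$ independent of $m$ and $f$.

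The main obstacle is bookkeeping rather than mathematics: Nevai's theorem is stated in his own normalization (often with the ``$(1-x)^{1/4}(1+x)^{1/4}$'' or ``$\varphi^{-1/2}$'' factors absorbed differently, and sometimes with an extra $p_m$ normalization or with the weight written as $w = v^{a,b}$ but the $L^p$ statement carrying $v^{a,b}$ itself rather than a free multiplier), so one must be careful to translate between his exponent conventions and the ones used here so that the two inequalities in \eqref{gade-cond3}-style form come out as claimed. A secondary point to check is that the regime $0 < \alpha < 1$ keeps both $\alpha$ and $1-\alpha$ in $(-1,\infty)$ (indeed in $(0,1)$), so $p_m^{\rho}$ is a bona fide Jacobi orthonormal system and Nevai's theorem applies without degeneracy; and that the hypotheses $\gamma \ge 0$, $\delta \ge 0$ together with the upper bounds are consistent (they are, since $-\frac{\alpha}{2}+\frac{3}{4} > 0$ and $\frac{\alpha}{2}+\frac{1}{4} > 0$ for all $\alpha \in (0,1)$), so the admissible parameter region is nonempty. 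Once the translation of conventions is pinned down, the proof is a one-line citation.
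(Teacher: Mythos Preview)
Your proposal is correct and matches the paper's approach exactly: the paper does not give an independent proof but simply cites \cite[Th.~1, p.~680]{Nevai} and implicitly specializes the parameters, which is precisely the bookkeeping you carry out. Your translation of the endpoint conditions ($-\gamma > \frac{\alpha}{2}-\frac{3}{4}$ and $-\delta > \frac{1-\alpha}{2}-\frac{3}{4}$) recovers the stated bounds on $\gamma,\delta$, so nothing further is needed.
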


\section{Main results}
We present now our main results, concerned with the equations
\begin{eqnarray}(DA^{\rho}+K+H)f&=&g, \label{eqSigma0}\quad 0<\al<1,\\
(M_{\sigma\vphi}+DA^{\vphi}+K+H)f&=&g\label{eqSigma}.
\end{eqnarray}

\noindent We start  investigating  (\ref{eqSigma0}) in the pair  of  Zygmund spaces $\left(Z_r(\vgdal), Z_{r-1}(\vgdf)\right).$
\begin{theo}\label{teo-sigma0} Let $0<\al<1$. Assume that
with $\ga,\de$ satisfying
\begin{equation}\max\left\{0,-\frac \al 2 +\frac 14\right\}\leq \ga <  -\frac{\al}2+\frac 12,\qquad
\max\left\{0,\frac \al 2 -\frac 14\right\}\leq \de < \frac{\al}2,      \label{ipotesitheo3.2}
\end{equation}
 and for some $s>0$ it is
\begin{equation}\label{ky-cond}
\sup_{|x|\leq 1}\|k_x\|_{Z_{s}(\vgdf)}<+\infty,
\end{equation}
\begin{equation}\label{h-cond1}\sup_{|y|\leq 1}\vgdfy  \int_{-1}^1 |h(x,y)|u^{-1}(x)dx <+\infty,
\end{equation}
and
\begin{equation}\label{h-cond2}
A(\tau):=\sup_{y\in I_\tau}(u\varphi)(y) \int_{-1}^1 |\Delta _{\tau\vphi(y)}h(x,y)|u^{-1}(x)dx<\C \tau^s,
\end{equation}
with $\C\neq\C(\tau)$, $I_\tau=[-1+(2\tau)^2,1-(2\tau)^2]$. If $Ker(DA^{\rho}+K+H)=\{0\}$ in $Z_r(\vgdal)$ with $1< r<s+1$, then  equation (\ref{eqSigma0})
admits a unique solution $f^*$ in $Z_r(\vgdal)$ for any $g\in Z_{r-1}(\vgdf)$.
\end{theo}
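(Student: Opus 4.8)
The plan is to read \eqref{eqSigma0} as the operator equation $\mathcal{A}f=g$ with $\mathcal{A}:=DA^{\rho}+K+H$ acting from $Z_r(\vgdal)$ into $Z_{r-1}(\vgdf)$, and to show that $\mathcal{A}$ is a linear homeomorphism of the first space onto the second; once this is done, existence of a solution for every $g$ together with its uniqueness is immediate. I would obtain the homeomorphism property through a Fredholm argument: first prove that the dominant operator $DA^{\rho}$ is Fredholm of index $0$ from $Z_r(\vgdal)$ onto $Z_{r-1}(\vgdf)$ (and in fact a homeomorphism, since the endpoint conditions \eqref{ucond} are already absorbed into the Jacobi factor $\v=v^{\al,1-\al}$ via \eqref{zetaesp} with $\be=1-\al$); then prove that $K$ and $H$ are \emph{compact} from $Z_r(\vgdal)$ into $Z_{r-1}(\vgdf)$, so that $\mathcal{A}$ is again Fredholm of index $0$; finally use that an index-$0$ Fredholm operator is onto as soon as it is one-to-one, injectivity being exactly the hypothesis $\mathrm{Ker}(DA^{\rho}+K+H)=\{0\}$.

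\emph{The dominant operator.} Solving $(DA^{\rho}f)(y)=g(y)$ with $g\in Z_{r-1}(\vgdf)$ amounts to recovering $A^{\rho}f$ as the (suitably normalized) primitive of $g$ and then inverting the Cauchy singular operator $A^{\rho}$ by the classical closed-form inversion of the dominant airfoil equation, which — under the index relation $a=\cos(\pi\al)$, $b=-\sin(\pi\al)$ recalled in Section~2, where $A^{\rho}$ has Fredholm index $-1$ on $L^2_\rho$ — determines $f$ uniquely. To turn this scheme into two-sided bounds between the weighted Zygmund spaces one needs: (i)~that differentiation maps the range of $A^{\rho}$ boundedly onto $Z_{r-1}(\vgdf)$, with one-dimensional kernel (the constants), which is obtained from the characterization \eqref{norm2} together with \eqref{Favard}, \eqref{Em-est-Zr} and a Bernstein-type inequality in the $\varphi$-metric, the hypothesis $r>1$ making $D$ meaningful on $Z_r$; and (ii)~the weighted boundedness of the finite Hilbert transform with Jacobi weight $\v=v^{\al,1-\al}$ in the $\varphi$-modulus scale, which is exactly what turns a $\vgdal$-weighted datum into a $\vgdf$-weighted one. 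The range \eqref{ipotesitheo3.2} of $(\ga,\de)$ is precisely the one in which these weighted estimates (and also the boundedness of $L_m^{w}$, $L_m^{\rho}$ from Lemma~\ref{Lag1}) hold; combining the $+1$ contributed by $D$ with the $-1$ index of $A^{\rho}$ gives $\mathrm{ind}(DA^{\rho})=0$.

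\emph{Compactness of $K$ and $H$, and conclusion.} From \eqref{ipotesitheo3.2} one has $\ga,\de<\tfrac12$, hence $u^{-1}\in L^1(-1,1)$; using $\|fu\v\|_\infty\le\|f\|_{Z_r(\vgdal)}$ and the identity $\Delta^{k}_{\tau\varphi}(Kf)(y)=\tfrac1\pi\int_{-1}^1(\Delta^{k}_{\tau\varphi}k_x)(y)(f\v)(x)\,dx$, hypothesis \eqref{ky-cond} yields
\[
\Omega_{\varphi}^{k}(Kf,t)_{\vgdf}\ \le\ \frac{1}{\pi}\,\|fu\v\|_\infty\Big(\sup_{|x|\le1}\|k_x\|_{Z_{s}(\vgdf)}\Big)\,t^{s}\!\int_{-1}^{1}\!u^{-1}(x)\,dx\ \le\ \C\,t^{s}\,\|f\|_{Z_r(\vgdal)},
\]
and similarly $\|(Kf)u\varphi\|_\infty\le\C\|f\|_{Z_r(\vgdal)}$, so $K$ maps $Z_r(\vgdal)$ boundedly into $Z_{s}(\vgdf)$. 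Likewise, \eqref{h-cond1} gives $\|(Hf)u\varphi\|_\infty\le\C\|fu\v\|_\infty$, while $\Delta^{1}_{\tau\varphi}(Hf)(y)=\tfrac1\pi\int_{-1}^1\Delta_{\tau\varphi(y)}h(x,y)(f\v)(x)\,dx$ together with \eqref{h-cond2} gives $\Omega_{\varphi}^{1}(Hf,t)_{\vgdf}\le\tfrac1\pi\big(\sup_{0<\tau\le t}A(\tau)\big)\|fu\v\|_\infty\le\C\,t^{s}\|f\|_{Z_r(\vgdal)}$, so $H$ too maps $Z_r(\vgdal)$ boundedly into $Z_{s}(\vgdf)$. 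Since $r-1<s$ (because $r<s+1$), the embedding $Z_{s}(\vgdf)\hookrightarrow Z_{r-1}(\vgdf)$ is compact — a ball of $Z_{s}(\vgdf)$ has best-approximation errors $E_i(\cdot)_{\vgdf}=O(i^{-s})$ uniformly, hence is totally bounded for the weaker $Z_{r-1}(\vgdf)$-norm by \eqref{norm2} and a standard truncation (Ascoli) argument — so $K$ and $H$ are compact from $Z_r(\vgdal)$ into $Z_{r-1}(\vgdf)$. Consequently $\mathcal{A}=DA^{\rho}+K+H$ is Fredholm of index $0$; since it is injective by hypothesis, it is bijective, and by the open mapping theorem $\mathcal{A}^{-1}$ is bounded, so for every $g\in Z_{r-1}(\vgdf)$ equation \eqref{eqSigma0} has the unique solution $f^{*}=\mathcal{A}^{-1}g\in Z_r(\vgdal)$.

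The main obstacle is the step on the dominant operator: transporting the Fredholm (indeed invertibility) information for the airfoil operator from the $L^2_\rho$ setting, where it is classical, into the pair of weighted Zygmund spaces with uniform norms. This requires matching the endpoint behaviour of $\v=v^{\al,1-\al}$ on the domain side with that of $\varphi=v^{1/2,1/2}$ on the range side — the finite Hilbert transform trading one for the other up to a smooth factor — while simultaneously offsetting the loss of one order of smoothness caused by $D$ against the $+1$ it contributes to the index; the admissible range \eqref{ipotesitheo3.2} of $(\ga,\de)$ and the condition $r>1$ are tailored precisely so that these weighted mapping estimates hold. Everything else, once this is in place, is routine.
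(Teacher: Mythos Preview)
Your proposal is correct and follows essentially the same strategy as the paper: show $DA^{\rho}:Z_r(u\rho)\to Z_{r-1}(u\varphi)$ is an isomorphism, show $K$ and $H$ are compact via boundedness into $Z_s(u\varphi)$ together with the compact embedding $Z_s(u\varphi)\hookrightarrow Z_{r-1}(u\varphi)$, and conclude by the Fredholm alternative. The only cosmetic differences are that the paper makes the dominant step concrete by factoring $DA^{\rho}$ through the codimension-one subspace $Z_{r,0}(u)=\{f\in Z_r(u):\int_{-1}^1 f\rho^{-1}=0\}$ (with $A^{\rho}:Z_r(u\rho)\to Z_{r,0}(u)$ an isomorphism by a cited result, and $D:Z_{r,0}(u)\to Z_{r-1}(u\varphi)$ an isomorphism proved via Bernstein/weak-Jackson), and that it bounds $E_m(Kf)_{u\varphi}$ through a polynomial-in-$y$ approximation lemma rather than by passing $\Delta^k_{\tau\varphi}$ under the integral as you do.
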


Provided the conditions assuring existence and uniqueness of the solution of equation \eqref{eqSigma0},  we go to describe the
 numerical method proposed to approximate its solution.
Letting  
\begin{equation}\label{Km-def}(K_mf)(y)=\frac 1{\pi}\int_{-1}^1 L_m^{\rho}(k_y,x)(f\v)(x)dx,\end{equation}
we proceed to solve the finite dimensional equation
\begin{equation}\label{eqap}(D A^{\rho}+L_m^{w} K_m+L_m^{w}H)f_m=L_m^{w}g, \quad m\geq 1,\end{equation}
in the unknown $f_m$, where 
 \begin{equation}\label{fm1}f_m(y)=\sum_{k=1}^m \psi_k^{\rho}(y)a_k.  \end{equation}
 Since by \cite[Theorems 9.9 and 9.14, Remark 9.15]{PSbook} and \cite[(4.21.7)]{Szego} we get
\begin{eqnarray}
\label{Dap}DA^{\rho} p_m^{\rho}= (m+1)p_m^{w}, \quad m=0,1,\ldots,
\end{eqnarray}
equation \eqref{eqap} can be written as
\begin{equation*}
L_m^{w}( D A^{\rho}f_m +K_m f_m+H f_m)=L_m^{w}(g)
\end{equation*}
and collocating it at the zeros $\{x_i:= x_i^{w}\}_{i=1}^m$ of $p_m^{w}$, we get, for $i=1,\dots,m,$
\begin{eqnarray}\label{systemdef} (u\vphi D A^{\rho}f_m)(x_i)+ (u\vphi K_mf_m)(x_i)+(u\vphi H f_m)(x_i)=(u\vphi g )(x_i).
\end{eqnarray}
In view of \eqref{lag_rho} and \eqref{Dap}
\begin{equation}
\label{DAxi}(D A^{\rho}f_m)(x_i)=\sum_{k=1}^m \frac{a_k}{(u\rho)(t_k)}\la_{m,k}^{\rho}\sum_{j=0}^{m-1}p_j^{\rho}(t_k)(j+1)
p_j^{w}(x_i)
\end{equation}
and, by \eqref{Km-def},
\begin{equation}
\label{Kxi}(K_mf_m)(x_i)=\frac 1\pi\sum_{k=1}^m \frac{a_k}{(u\rho)(t_k)}\la_{m,k}^{\rho} \, k(t_k,x_i).
\end{equation}
Moreover, we have
\begin{equation}
\label{Hxi} (H f_m)(x_i)=\!\frac 1\pi\!\sum_{k=1}^m \!\frac{a_k \la_{m,k}^{\rho}}
{(u \rho)(t_k)}\!\sum_{j=0}^{m-1}p_j^{\rho}(t_k)c_j(x_i),\ \ \ c_j(y)=\int_{-1}^1\!\! h(x,y)p_j^{\rho}(x)\v(x)dx.
\end{equation}
Thus, combining \eqref{DAxi}, \eqref{Kxi} and \eqref{Hxi} with \eqref{systemdef}, setting $\mathbf{a}_m=[a_1,\dots,a_m]^T,$ we get the linear system
\begin{equation}
\label{system1}\mathbf{A}_m\mathbf{a}_m=\mathbf{b}_m,
\end{equation}
\begin{equation}\label{Ambmdef}\mathbf{A}_m=\mathbf{U}_m\left(\mathbf{V}_m \left[\mathbf{D}_m
\mathbf{Z}_m+\mathbf{W}_m\right]+\mathbf{K}_m\right)\mathbf{\Lambda}_m, \quad \quad \mathbf{b}_m=\mathbf{U}_m \mathbf{g}_m,\end{equation}
with $\displaystyle \mathbf{U}_m=\diag\left((u\varphi)(x_1),\dots,(u\varphi)(x_m) \right),\quad \mathbf{g}_m=[g(x_1),\dots,g(x_m)]^T,$
$$\hspace*{-1cm}\left\{\mathbf{W}_m(i,j)=\frac 1\pi c_i(x_j)\right\}_{_{j=1,\dots,m}^{i=0,1,\dots,m-1}}, \  \left\{ \mathbf{K}_m(i,k)=\frac 1\pi k(t_k,x_i)\right\}_{^{i=1,\dots,m}_{k=1,\dots,m}}, $$
$$\mathbf{D}_m=\diag(1,\dots,m),\quad  \mathbf{\Lambda}_m=\diag\left(\frac{\la_{m,1}^{\rho}}{(\vgdal)(t_1)},\dots,\frac{\la_{m,m}^{\rho}}{(\vgdal)(t_m)} \right),$$
$$\{\mathbf{V}_m(i,j)=p_j^{\rho}(t_i)\}_{^{i=1,\dots,m}_{j=0,1,\dots,m-1}},\quad  \{\mathbf{Z}_m(i,j)=p_i^{w}(x_j)\}_{_{j=1,\dots,m}^{i=0,1,\dots,m-1}}.$$
Therefore, if $\mathbf{a}_m^*=[a_1^*,\dots,a_m^*]^T$ is the unique solution of the linear system \eqref{system1}, we construct  the unique solution of the equation \eqref{eqap} as follows
\begin{equation*}f_m^*(y)=\sum_{k=1}^m \psi_k^{\rho}(y) \ a_k^*.
\end{equation*}

\noindent About the stability and the convergence of the method, we  prove the following
\begin{theo}\label{stability-convergence} Let $0<\al<1$. Let us assume that \eqref{ipotesitheo3.2} holds
and that, for some $s>0$, the kernels $k$ and $h$ satisfy the assumptions \eqref{ky-cond}-\eqref{h-cond2}, $g\in Z_{s}(\vgdf)$, and $Ker(DA^{\rho}+K+H)=\{0\}$ in $Z_r(\vgdal)$ with $1< r<s+1$.\\
Then, for $m$ sufficiently large (say $m>m_0$), the operators $D A^{\rho}+L_m^{w} K_m+L_m^{w}H: (\PP_{m-1},\|\cdot\|_{Z_r(\vgdal)})\to (\PP_{m-1}, \|\cdot\|_{Z_{r-1}(\vgdf)})$ are invertible and their inverses are uniformly bounded.
Moreover, the unique solution $f^*$ of \eqref{eqSigma0} belongs to $Z_{s+1}(\vgdal)$ and if $f_m^*$ denotes the unique solution of \eqref{eqap}, for all $1< r< s+1,$ the following error estimate holds true
\vspace*{-3mm}
\begin{equation}\label{convergence}\|f^*-f_m^*\|_{Z_r(\vgdal)}\leq \C \left(\frac{\log m}{m^{s-r+1}}\right) \|f^*\|_{Z_{s+1}(\vgdal)},\end{equation}
where the constant $\C$ is independent of $m$ and $f^*$. \end{theo}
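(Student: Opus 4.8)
\noindent\emph{Plan of proof.} The strategy is to treat $DA^{\rho}$ as the dominant, boundedly invertible part of the operator and to recast both \eqref{eqSigma0} and its discretization \eqref{eqap} as identity-plus-compact, respectively identity-plus-collectively-compact, equations in $Z_r(\vgdal)$. First I would record the mapping properties of $DA^{\rho}$: by \eqref{Dap} it acts as $p_n^{\rho}\mapsto(n+1)p_n^{w}$, $n\ge 0$, and combining this with the characterization \eqref{norm2} of the Zygmund norms by best polynomial approximation one gets that $DA^{\rho}\colon Z_r(\vgdal)\to Z_{r-1}(\vgdf)$ and $DA^{\rho}\colon Z_{s+1}(\vgdal)\to Z_{s}(\vgdf)$ are isomorphisms, and that $DA^{\rho}$ maps $\PP_{m-1}$ onto $\PP_{m-1}$ with $\|(DA^{\rho})^{-1}|_{\PP_{m-1}}\|\le\C$, $\C\neq\C(m)$.

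Next, under \eqref{ky-cond} and \eqref{h-cond1}--\eqref{h-cond2} the operators $K$ and $H$ map $C_{\vgdal}$ boundedly into $Z_{s}(\vgdf)$ (here Lemma \ref{Nevai} together with \eqref{h-cond1}--\eqref{h-cond2} handle the weakly singular part $h$), so $\mathcal T:=(DA^{\rho})^{-1}(K+H)$ maps $Z_r(\vgdal)$ into $Z_{s+1}(\vgdal)$, and since $s+1>r$ the embedding $Z_{s+1}(\vgdal)\hookrightarrow Z_r(\vgdal)$ is compact, whence $\mathcal T$ is compact on $Z_r(\vgdal)$. Equation \eqref{eqSigma0} is then equivalent to $(I+\mathcal T)f=(DA^{\rho})^{-1}g$, and $Ker(DA^{\rho}+K+H)=\{0\}$ together with Fredholm's alternative gives $(I+\mathcal T)^{-1}\in\mathcal{L}(Z_r(\vgdal))$ (this is essentially Theorem \ref{teo-sigma0}); inserting $g\in Z_s(\vgdf)$ into $f^{*}=(DA^{\rho})^{-1}g-\mathcal T f^{*}$ and using the regularity just recorded shows $f^{*}\in Z_{s+1}(\vgdal)$ with $\|f^{*}\|_{Z_{s+1}(\vgdal)}\le\C\|g\|_{Z_{s}(\vgdf)}$. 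For the stability statement I would rewrite \eqref{eqap} as $(I+\mathcal T_m)f_m=(DA^{\rho})^{-1}L_m^{w}g$ on $\PP_{m-1}$, with $\mathcal T_m:=(DA^{\rho})^{-1}L_m^{w}(K_m+H)\big|_{\PP_{m-1}}$, and then compare $\mathcal T_m$ with $\mathcal T$: the technical core is a group of weighted estimates in the Zygmund norms, to be derived from Lemma \ref{Lag1} and the $\vphi$-modulus-of-smoothness machinery behind \eqref{mainMod}--\eqref{Em-est-Zr}, namely a Lebesgue-type bound $\|L_m^{w}G\|_{Z_{r-1}(\vgdf)}\le\C\log m\,\|G\|_{Z_{r-1}(\vgdf)}$ and the error estimate $\|(I-L_m^{w})G\|_{Z_{r-1}(\vgdf)}\le\C(\log m)\,m^{-(s-r+1)}\|G\|_{Z_{s}(\vgdf)}$, the analogous facts for $L_m^{\rho}$, and the quadrature error $\|(K-K_m)f\|_{Z_{r-1}(\vgdf)}\le\C(\log m)\,m^{-s}\|f\|_{C_{\vgdal}}$ coming from the Lagrange interpolation error of $k$ controlled by \eqref{ky-cond}. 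Since $K,H$ smooth $C_{\vgdal}$ into $Z_s(\vgdf)$, these estimates make $\{\mathcal T_m\}$ collectively compact and force $\mathcal T_m\to\mathcal T$ in the mode required by the classical perturbation theorem for projection/discretization methods; as $I+\mathcal T$ is invertible, it follows that for $m>m_0$ the operators $I+\mathcal T_m$, equivalently $DA^{\rho}+L_m^{w}K_m+L_m^{w}H$, are invertible on $\PP_{m-1}$ with uniformly bounded inverses.

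Finally, for the rate, let $P_{m-1}^{*}\in\PP_{m-1}$ be of near best approximation to $f^{*}$ in $Z_r(\vgdal)$; by \eqref{Em-est-Zr} and $f^{*}\in Z_{s+1}(\vgdal)$ it can be chosen so that $\|f^{*}-P_{m-1}^{*}\|_{Z_r(\vgdal)}\le\C\, m^{-(s-r+1)}\|f^{*}\|_{Z_{s+1}(\vgdal)}$. Then
\[
\|f^{*}-f_m^{*}\|_{Z_r(\vgdal)}\le\|f^{*}-P_{m-1}^{*}\|_{Z_r(\vgdal)}+\big\|(DA^{\rho}+L_m^{w}K_m+L_m^{w}H)^{-1}\big\|\;\|\mathbf R_m\|_{Z_{r-1}(\vgdf)},
\]
and, using $g=(DA^{\rho}+K+H)f^{*}$ and $DA^{\rho}P_{m-1}^{*}\in\PP_{m-1}$, the residual is
\[
\mathbf R_m=L_m^{w}\Big[DA^{\rho}(P_{m-1}^{*}-f^{*})+\big(K_mP_{m-1}^{*}-Kf^{*}\big)+H(P_{m-1}^{*}-f^{*})\Big].
\]
Splitting $K_mP_{m-1}^{*}-Kf^{*}=(K_m-K)P_{m-1}^{*}+K(P_{m-1}^{*}-f^{*})$, each summand is controlled by the Zygmund-norm estimates of the previous paragraph (applied to $DA^{\rho}f^{*}\in Z_s(\vgdf)$ and to $K,H$ mapping into $Z_s(\vgdf)$) times $\|f^{*}-P_{m-1}^{*}\|_{Z_r(\vgdal)}$ or the quadrature error, giving $\|\mathbf R_m\|_{Z_{r-1}(\vgdf)}\le\C(\log m)\,m^{-(s-r+1)}\|f^{*}\|_{Z_{s+1}(\vgdal)}$ and hence \eqref{convergence}. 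The main obstacle is precisely this block of weighted Zygmund-norm estimates for $L_m^{w}$, $L_m^{\rho}$ and for the quadrature error $K-K_m$: unlike the uniform-norm bounds of Lemma \ref{Lag1}, they require passing through the $\vphi$-moduli \eqref{mainMod} and the equivalence \eqref{norm2}, and they must be combined carefully with the smoothing of $K+H$ to legitimize the collectively-compact perturbation argument, since the interpolation projections do not converge in the Zygmund norm.
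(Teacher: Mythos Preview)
Your outline is correct and would yield a valid proof, but it is more elaborate than the route the paper actually takes. The key simplification in the paper is that, because $K$ and $H$ already smooth $C_{\vgdal}$ into $Z_s(\vgdf)$, the perturbations $K-L_m^{w}K_m$ and $H-L_m^{w}H$ tend to zero in the \emph{operator norm} from $C_{\vgdal}$ (hence from $Z_r(\vgdal)$) into $Z_{r-1}(\vgdf)$, with the explicit rate $\log m/m^{s-r+1}$ (Lemmas \ref{(K-Km)fm} and \ref{(H-Hm)fm}). This norm convergence makes the collectively-compact machinery you invoke unnecessary: a plain Neumann-type perturbation of the already invertible $DA^{\rho}+K+H$ (as in \cite[Theorem 10.1]{Kress}) gives the uniform boundedness of $(DA^{\rho}+L_m^{w}K_m+L_m^{w}H)^{-1}$ on the whole space $Z_r(\vgdal)$ directly. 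For the error estimate the paper then uses the single residual identity
\[
f^*-f_m^*=(DA^{\rho}+L_m^{w}K_m+L_m^{w}H)^{-1}\bigl[(g-L_m^{w}g)-(L_m^{w}K_m-K)f^*-(L_m^{w}H-H)f^*\bigr],
\]
each term of which is already controlled by the same two lemmas and by \eqref{immZ} applied to $g\in Z_s(\vgdf)$; your intermediate near-best polynomial $P_{m-1}^*$ and the associated residual splitting are not needed. One caution about your list of ``technical core'' estimates: the Lebesgue-type bound $\|L_m^{w}G\|_{Z_{r-1}(\vgdf)}\le\C\log m\,\|G\|_{Z_{r-1}(\vgdf)}$ is neither established in the paper nor required; what is actually used is only the smoothing estimate \eqref{immZ}, always applied to functions that already lie in $Z_s(\vgdf)$ with $s>r-1$. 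Your argument can be made to work without that bound (by splitting $L_m^{w}DA^{\rho}(P_{m-1}^*-f^*)$ using $L_m^{w}DA^{\rho}P_{m-1}^*=DA^{\rho}P_{m-1}^*$ and $DA^{\rho}f^*\in Z_s(\vgdf)$), but the paper's direct identity avoids the issue entirely.
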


We conclude with the study of the  linear system conditioning.
\begin{theo}\label{condiz}
Under \ the \ assumptions \ of \ Theorem \ref{stability-convergence}, \ denoting by \ $\cond(\mathbf{A}_m)$ the con\-di\-tion num\-ber of $\mathbf{A}_m$ in infinity norm, we have

\begin{equation}
\label{cond1}\cond(\mathbf{A}_m)\leq  \C \ \left\|\left((DA^{\rho}+L_m^{w}K_m+L_m^{w}H)_{\ |\PP_{m-1}}\right)^{-1}\right\|_{C_{\vgdf}\to C_{\vv}} m \log^3 m,
\end{equation}
where $\C\neq\C(m).$
\end{theo}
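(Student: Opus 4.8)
The plan is to bound $\cond(\mathbf{A}_m) = \|\mathbf{A}_m\|_\infty \|\mathbf{A}_m^{-1}\|_\infty$ by separately estimating the two factors, using the factorization \eqref{Ambmdef} and relating the matrix $\mathbf{A}_m$ to the operator $(DA^\rho + L_m^w K_m + L_m^w H)_{|\PP_{m-1}}$ acting between the finite-dimensional spaces $(\PP_{m-1},\|\cdot\|_{C_{\vgdf}})$ and $(\PP_{m-1},\|\cdot\|_{C_{\vv}})$. The crucial observation is that, by the choice of the bases $\{\psi_k^\rho\}$ and $\{\psi_i^w\}$ together with \eqref{lag_rho}, the coordinate vector $\mathbf{a}_m = [a_1,\dots,a_m]^T$ of $f_m = \sum_k \psi_k^\rho a_k$ satisfies $a_k = (u\rho f_m)(t_k)$, so that $\|\mathbf{a}_m\|_\infty = \max_k |(u\rho f_m)(t_k)| \le \|f_m\|_{C_{\vv}}$; conversely, by the Marcinkiewicz-type inequality underlying Lemma \ref{Lag1} (i.e. \eqref{lagal1al} applied with $G=f_m$, since $L_m^\rho f_m = f_m$ on $\PP_{m-1}$) one gets $\|f_m\|_{C_{\vv}} \le \C\log m\,\|\mathbf{a}_m\|_\infty$. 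Thus passing between a polynomial in $\PP_{m-1}$ and its coefficient vector costs at most a factor $\log m$ in each direction when the domain norm is the (weighted) sup-norm on $\PP_{m-1}$; an analogous statement holds on the range side for the vector $\mathbf{U}_m\mathbf{g}_m$ versus $\|g\|_{C_{\vgdf}}$, at the cost of another $\log m$ via \eqref{lag1alal}.

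First I would estimate $\|\mathbf{A}_m^{-1}\|_\infty$. Writing $\mathbf{A}_m\mathbf{a}_m = \mathbf{b}_m$ as the discrete form of $(DA^\rho + L_m^w K_m + L_m^w H)f_m = L_m^w g$ and using the two coordinate-vs-norm comparisons above, one obtains
\[
\|\mathbf{A}_m^{-1}\|_\infty \le \C\,\log^2 m\;\left\|\left((DA^\rho + L_m^w K_m + L_m^w H)_{|\PP_{m-1}}\right)^{-1}\right\|_{C_{\vgdf}\to C_{\vv}},
\]
where one $\log m$ comes from recovering $f_m$ from $\mathbf{a}_m$ in $C_{\vv}$ and the other from relating $\|L_m^w g\|_{C_{\vgdf}}$ back to the nodal data $\mathbf{U}_m\mathbf{g}_m$ (here one uses that $L_m^w$ reproduces polynomials and the estimate \eqref{lag1alal}). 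Next I would estimate $\|\mathbf{A}_m\|_\infty$ directly from the factorization \eqref{Ambmdef}: one needs $\|\mathbf{A}_m\|_\infty \le \C\, m\log m$. The factor $\mathbf{D}_m = \diag(1,\dots,m)$ supplies the linear factor $m$ (this reflects the order-raising identity \eqref{Dap}), while the remaining matrices — $\mathbf{U}_m$, $\mathbf{\Lambda}_m$, $\mathbf{V}_m$, $\mathbf{Z}_m$, $\mathbf{W}_m$, $\mathbf{K}_m$ — are controlled using standard estimates for Christoffel numbers ($\la_{m,k}^\rho \sim \frac{1}{m}\rho(t_k)\varphi(t_k)$ up to boundary corrections), Gaussian-rule bounds of the form $\sum_k \la_{m,k}^\rho |p_j^\rho(t_k)| \le \C$, the weighted bound $\|p_j^\rho\rho^{1/2}\varphi^{1/2}\|_\infty \le \C$ for Jacobi polynomials, Lemma \ref{Nevai} applied to the rows involving $h$, and assumption \eqref{ky-cond} for the rows involving $k$; each of these contributes at most a logarithmic or constant factor, the single $\log m$ being absorbed from the Lebesgue-type estimates of Lemma \ref{Lag1}. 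Multiplying the two bounds gives the factor $m\log^3 m$ in \eqref{cond1}.

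The main obstacle is the sharp bookkeeping in the bound for $\|\mathbf{A}_m\|_\infty$: one must verify that the various weighted Jacobi-polynomial and Christoffel-number estimates combine so that the whole block $\mathbf{V}_m[\mathbf{D}_m\mathbf{Z}_m + \mathbf{W}_m] + \mathbf{K}_m$, pre- and post-multiplied by $\mathbf{U}_m$ and $\mathbf{\Lambda}_m$, produces exactly one extra power of $m$ (from $\mathbf{D}_m$) and only one extra $\log m$ — in particular checking that the boundary behaviour of the weights $u\varphi$ and $u\rho$ under the restriction \eqref{ipotesitheo3.2} on $\ga,\de$ is compatible with these estimates, so no hidden growth factors appear at the endpoints. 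The remaining steps — the two coordinate-vs-norm comparisons and their combination — are routine consequences of the choice of the interpolatory bases and of Lemma \ref{Lag1}, already exploited in the stability part (Theorem \ref{stability-convergence}).
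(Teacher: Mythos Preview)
Your overall strategy is the paper's: exploit the basis choice so that the coordinate vector of $f_m\in\PP_{m-1}$ in the $\{\psi_k^\rho\}$ basis is $((u\rho f_m)(t_k))_k$, and similarly on the range side, then bound $\|\mathbf{A}_m\|_\infty$ and $\|\mathbf{A}_m^{-1}\|_\infty$ via the operator norms of $(DA^\rho+L_m^wK_m+L_m^wH)_{|\PP_{m-1}}$ and its inverse. The final count $m\log^3 m$ is right, but your allocation of the three logarithms between the two factors is inverted, and the argument you sketch for each side would not quite deliver what you claim.

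For $\|\mathbf{A}_m^{-1}\|_\infty$ only \emph{one} logarithm is needed, not two. The step ``recovering $f_m$ from $\mathbf a_m$ in $C_{u\rho}$'' is in the wrong direction: what you actually use is $\|\mathbf a_m\|_\infty=\max_k|(u\rho f_m)(t_k)|\le\|f_m\|_{C_{u\rho}}$, which is free. The single $\log m$ comes only from $\|\tilde\eta\|_{C_{u\varphi}}\le\C\log m\,\|\eta\|_\infty$ via \eqref{lag1alal}. This is exactly how the paper gets \eqref{norminvA}.

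Conversely, for $\|\mathbf{A}_m\|_\infty$ the paper obtains $\C m\log^2 m$, not $\C m\log m$, and it does \emph{not} grind through the factorization \eqref{Ambmdef} as you propose. Instead it stays at the operator level: from $\|\mathbf A_m\theta\|_\infty\le\|\tilde\eta\|_{C_{u\varphi}}=\|(DA^\rho+L_m^wK_m+L_m^wH)\tilde\theta\|_{C_{u\varphi}}$, the dominant term is handled by the key polynomial estimate
\[
\|DA^\rho P\|_{C_{u\varphi}}\le \C\, m\log m\,\|P\|_{C_{u\rho}},\qquad P\in\PP_{m-1},
\]
obtained by first proving $\|A^\rho P\|_{C_u}\le\C\log m\,\|P\|_{C_{u\rho}}$ (splitting the integral $\int_0^1 \Omega_\varphi^k(P,t)_{u\rho}\,t^{-1}dt$ at $1/m$ and invoking Bernstein) and then applying Bernstein once more for the derivative. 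A second $\log m$ then enters through $\|\tilde\theta\|_{C_{u\rho}}\le\C\log m\,\|\theta\|_\infty$ from \eqref{lagal1al}. The $K_m$ and $H$ terms contribute only $\log^2 m$ via Lemmas~\ref{Lag1} and~\ref{Nevai} together with \eqref{ky-cond} and \eqref{h-cond1}. Thus the ``sharp bookkeeping'' over the matrix factors that you flag as the main obstacle is bypassed entirely; the factorization \eqref{Ambmdef} is used only to \emph{set up} the linear system, not to estimate it.
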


Now we treat the case of the equation (\ref{eqSigma}).
Next theorem assigns sufficient conditions under which it is unisolvent.
\begin{theo}\label{teo-sigma}
Let us assume that for some $s>0$, $0\leq \ga <  \frac 14$ and $0\leq \de < \frac{1}4$,
the kernels $k$ and $h$ satisfy the assumptions \eqref{ky-cond}-\eqref{h-cond2} with $\al=\frac 12$ and $\sigma\vphi\in Z_s.$
If $Ker(M_{\sigma\vphi}+DA^{\vphi}+K+H)=\{0\}$ in $Z_r(\vgdf)$ with $1< r<s+1$, then equation \eqref{eqSigma}
admits a unique solution $f^*$ in $Z_r(\vgdf)$ for any $g\in Z_{r-1}(\vgdf)$.
\end{theo}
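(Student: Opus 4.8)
The plan is to reduce equation \eqref{eqSigma} to an already-understood situation by exploiting the structural parallel with \eqref{eqSigma0}. When $\al=\frac12$ we have $\v=w=v^{1/2,1/2}=\vphi$, so $A^{\v}=A^{\vphi}$ coincides with the Cauchy singular operator from Theorem \ref{teo-sigma0} specialized to $\al=\frac12$, and the mapping properties of $DA^{\vphi}$, $K$ and $H$ between the Zygmund spaces $Z_r(\vgdf)$ and $Z_{r-1}(\vgdf)$ are exactly those established (implicitly) in the proof of Theorem \ref{teo-sigma0}. The only genuinely new term is the multiplication operator $M_{\sigma\vphi}$, so the whole argument hinges on showing that $f\mapsto (\sigma\vphi)f$ is a \emph{compact} perturbation of $DA^{\vphi}+K+H$ in the relevant pair of spaces, after which the standard Fredholm alternative together with the trivial-kernel hypothesis yields unisolvency.

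First I would fix $\al=\frac12$ and record that under $0\le\ga<\frac14$, $0\le\de<\frac14$ the hypotheses \eqref{ipotesitheo3.2} are satisfied, so everything proved for \eqref{eqSigma0} applies verbatim: in particular $DA^{\vphi}+K+H:Z_r(\vgdf)\to Z_{r-1}(\vgdf)$ is Fredholm of index $0$ for $1<r<s+1$ (the index-$-1$ statement for $A^{\rho}$ on $L^2_\rho$ is compensated by the derivative $D$, as \eqref{Dap} makes explicit, so $DA^{\vphi}$ is bounded and boundedly invertible modulo finite-rank between these spaces). Next I would show $\sigma\vphi\in Z_s$ implies $M_{\sigma\vphi}:Z_r(\vgdf)\to Z_r(\vgdf)$ is bounded for $1<r\le s$, using that $Z_s$ is a Banach algebra acting on $Z_r$ when $r\le s$ — this is the product/multiplier estimate for $\vphi$-moduli of smoothness, controllable through the equivalence \eqref{norm2} and \eqref{Em-est-Zr}. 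Since the inclusion $Z_r(\vgdf)\hookrightarrow Z_{r-1}(\vgdf)$ is compact (again by \eqref{norm2}: the best-approximation tails $(1+i)^{r-1}E_i(f)_u$ are dominated by a null sequence times $(1+i)^rE_i(f)_u$), the composite $M_{\sigma\vphi}:Z_r(\vgdf)\to Z_{r-1}(\vgdf)$ is compact.

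With compactness in hand the conclusion is immediate: $M_{\sigma\vphi}+DA^{\vphi}+K+H$ is a compact perturbation of a Fredholm index-$0$ operator, hence itself Fredholm of index $0$ from $Z_r(\vgdf)$ to $Z_{r-1}(\vgdf)$; the hypothesis $Ker(M_{\sigma\vphi}+DA^{\vphi}+K+H)=\{0\}$ in $Z_r(\vgdf)$ then forces the operator to be a bijection, so for every $g\in Z_{r-1}(\vgdf)$ there is a unique solution $f^*\in Z_r(\vgdf)$. I would also remark that, as in Theorem \ref{stability-convergence}, a bootstrap argument using \eqref{ky-cond}, \eqref{h-cond2} and $\sigma\vphi\in Z_s$ upgrades the regularity of $f^*$ from $Z_r$ to $Z_{s+1}$, though that is not needed for the bare unisolvency statement.

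The main obstacle I anticipate is the multiplier estimate for $M_{\sigma\vphi}$ on the weighted Zygmund space: one must verify that multiplication by a $Z_s$ function maps $Z_r(\vgdf)$ boundedly into itself (for $r\le s$), which requires a Leibniz-type bound $\Omega_\vphi^k(fg,t)_u\le \C\big(\|fu\|_\infty\,\Omega_\vphi^k(g,t)_{v^{0,0}}+\|g\|_\infty\,\Omega_\vphi^k(f,t)_u+\text{(mixed terms)}\big)$, and care is needed because $\sigma\vphi$ carries the factor $\vphi$ which vanishes at $\pm1$, matching the weights $w=\rho=\vphi$ in the $\al=\frac12$ case. Once this estimate is secured the rest is soft functional analysis; everything else is either inherited from the proof of Theorem \ref{teo-sigma0} or standard compact-embedding reasoning.
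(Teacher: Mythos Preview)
Your approach is essentially the paper's own: it proves exactly your multiplier claim as a separate lemma (Lemma~\ref{M-compact-1}), showing $M_{\sigma\vphi}:Z_r(\vgdf)\to Z_r(\vgdf)$ is bounded and then invoking the compact embedding $Z_r(\vgdf)\hookrightarrow Z_{r-1}(\vgdf)$, after which the Fredholm alternative combined with Corollary~\ref{corolInv} and Lemmas~\ref{K-compact-1}, \ref{H-compact-1} finishes the proof. The multiplier bound you flag as the main obstacle is dispatched more simply than via a Leibniz-type modulus estimate: the paper uses the elementary splitting $E_m(\sigma\vphi\, f)_{\vgdf}\le \|\sigma\vphi\|_\infty E_{\lfloor m/2\rfloor}(f)_{\vgdf}+2\|f\vgdf\|_\infty E_{\lfloor m/2\rfloor}(\sigma\vphi)$ together with~\eqref{norm2}, which avoids handling mixed modulus terms altogether.
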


Now, to approximate the solution of equation \eqref{eqSigma} we  solve the following finite dimensional equation
\begin{equation}\label{eqfinSigma}(L_m^{\vphi}M_{\sigma\vphi}+D A^{\vphi}+L_m^{\vphi}K_m+ L_m^{\vphi}H) f_m=L_m^{\vphi}(g), \quad m\geq 1,\end{equation}
in the unknown
\begin{equation}\label{fm2}f_m(y)=\sum_{k=1}^m \psi_k^{\vphi}(y)\bar a_k, \quad \psi_k^{\vphi}(y)=\frac{\ell_{m,k}^{\vphi}(y)}{(u\varphi)(x_k^{\vphi})}, \quad x_k^{\vphi} \mbox{ zeros of } p_m^{\vphi}.
\end{equation}
By \eqref{DAxi}, \eqref{Kxi}, \eqref{Hxi} with $\al=\frac 12$ and
\[(\vgdf M_{\sigma\vphi}f_m)(x_i^{\vphi})=(\sigma\vphi)(x_i^{\vphi})a_i,\quad 1\le i\le m,\]
the finite dimensional equation \eqref{eqfinSigma} is equivalent to the linear system
\begin{equation*}
\bar{\mathbf{A}}_m \mathbf{\bar a}_m=\mathbf{b}_m,
\end{equation*}
where
$$\mathbf{\bar{a}}_m=[\bar a_1,\dots,\bar a_m]^T, \quad \mathbf{b}_m=\mathbf{U}_m \mathbf{g}_m, \quad \bar{\mathbf{A}}_m=\mathbf{\Gamma}_m+\mathbf{A}_m,$$
with $\mathbf{A}_m$ and $\mathbf{b}_m$ defined in \eqref{Ambmdef} and
$\mathbf{\Gamma}_m=\diag((\sigma\vphi)(x_1^{\vphi}),\dots,(\sigma\vphi)(x_m^{\vphi})).$

About  the stability and the convergence of the method and the conditioning of the linear systems, next theorems hold true.

\begin{theo}\label{stability-convergence2} Under the same assumptions of Theorem \ref{stability-convergence} with $\al=\frac 12$, if for some $s>0$, $\sigma\vphi\in Z_s$ and $Ker(M_{\sigma\vphi}+DA^{\vphi}+K+H)=\{0\}$ in $Z_r(\vgdf),$ with $1< r<s+1,$
then, for $m$ sufficiently large, the operators $L_m^{\vphi}M_{\sigma\vphi} +D A^{\vphi}+L_m^{\vphi}K_m + L_m^{\vphi}H: (\PP_{m-1},\|\cdot\|_{Z_r(\vgdf)})\to (\PP_{m-1}, \|\cdot\|_{Z_{r-1}(\vgdf)})$ are invertible and their inverses are uniformly bounded.

\noindent Moreover,  the unique solution $f_m^*$ of \eqref{eqfinSigma} converges to the unique solution  $f^*\in Z_{s+1}(\vgdf)$ of \eqref{eqSigma} and, for all $1< r< s+1,$ the following error estimate holds
\begin{equation}
\label{convergence2}
\|f^*-f_m^*\|_{Z_r(\vgdf)}\leq \C \left(\frac{\log m}{m^{s-r+1}}\right) \|f^*\|_{Z_{s+1}(\vgdf)},
\end{equation}
where the constant $\C$ is independent of $m$ and $f^*$.
\end{theo}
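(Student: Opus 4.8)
The plan is to reduce the analysis of equation \eqref{eqfinSigma} to that of equation \eqref{eqap} with $\al=\frac12$ (already handled in Theorem \ref{stability-convergence}) plus a perturbation coming from the multiplication operator $M_{\sigma\vphi}$ and the extra interpolation operator $L_m^{\vphi}$. First I would record the mapping properties of the relevant operators: by Theorem \ref{teo-sigma} the continuous equation \eqref{eqSigma} is unisolvent in $Z_r(\vgdf)$, and $M_{\sigma\vphi}\colon Z_r(\vgdf)\to Z_r(\vgdf)$ is bounded because $\sigma\vphi\in Z_s$ with $s\ge r$ (so pointwise multiplication preserves the Zygmund class, with the norm of $M_{\sigma\vphi}$ controlled by $\|\sigma\vphi\|_{Z_s}$); together with $DA^{\vphi}\colon Z_r(\vgdf)\to Z_{r-1}(\vphi\text{-weight})$ and the compactness of $K,H$ already exploited in the proof of Theorem \ref{stability-convergence}. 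The key point is that with $\al=\frac12$ one has $w=\rho=\vphi$, so the two interpolation processes of Lemma \ref{Lag1} collapse to a single process $L_m^{\vphi}$ at the Chebyshev-type nodes $x_k^{\vphi}$, and the basis \eqref{fm2} is exactly the one used in \S3.

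The core of the argument is a uniform-boundedness/invertibility step for the finite-dimensional operators $\mathcal{B}_m:=L_m^{\vphi}M_{\sigma\vphi}+DA^{\vphi}+L_m^{\vphi}K_m+L_m^{\vphi}H$ on $(\PP_{m-1},\|\cdot\|_{Z_r(\vgdf)})$. I would write $\mathcal{B}_m=L_m^{\vphi}M_{\sigma\vphi}+\mathcal{A}_m$, where $\mathcal{A}_m=DA^{\vphi}+L_m^{\vphi}K_m+L_m^{\vphi}H$ is precisely the operator of \eqref{eqap} with $\al=\frac12$, which by Theorem \ref{stability-convergence} is invertible with uniformly bounded inverse for $m>m_0$. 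Thus it suffices to show $\mathcal{A}_m^{-1}L_m^{\vphi}M_{\sigma\vphi}$ is a uniformly small-in-norm (or, more safely, collectively compact) perturbation of the identity on $\PP_{m-1}$. For this I would use: (i) the Lagrange boundedness \eqref{lag1alal}–\eqref{lagal1al} promoted to the Zygmund norm via the equivalence \eqref{norm2} and the estimate \eqref{Em-est-Zr}, giving $\|L_m^{\vphi}F\|_{Z_{r-1}(\vphi w)}\le \C\log m\,\|F\|_{Z_{r}(\vphi w)}$-type bounds; (ii) the fact that $M_{\sigma\vphi}$ maps $Z_r(\vgdf)$ boundedly into $Z_r(\vgdf)$ and, because $\sigma\vphi\in Z_s$ with $s>r-1$, the operator $F\mapsto M_{\sigma\vphi}F-L_m^{\vphi}(M_{\sigma\vphi}F)$ has norm $\le \C (\log m)/m^{s-r+1}\to0$ from $Z_r$ to $Z_{r-1}$. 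Combining these with $\mathrm{Ker}(M_{\sigma\vphi}+DA^{\vphi}+K+H)=\{0\}$ in $Z_r(\vgdf)$ and the standard argument (as in Theorem \ref{stability-convergence}) that a collectively compact sequence converging strongly to an operator with trivial kernel yields uniformly bounded inverses for large $m$, I obtain invertibility of $\mathcal{B}_m$ with $\sup_{m>m_0}\|\mathcal{B}_m^{-1}\|<\infty$.

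Finally, for the error estimate I would use the usual perturbation identity: writing $\mathcal{B}f^*=g$ for the exact equation (which holds because $M_{\sigma\vphi}+DA^{\vphi}+K+H$ annihilates $f^*-$ nothing, i.e.\ $f^*$ solves \eqref{eqSigma}) and $\mathcal{B}_m f_m^*=L_m^{\vphi}g$, one gets
\[
f^*-f_m^*=\mathcal{B}_m^{-1}\big[(\mathcal{B}_m-\mathcal{B})f^*+(\mathcal{B}g-L_m^{\vphi}g)\big]\ \text{(suitably interpreted on }\PP_{m-1}\text{)},
\]
and then bounds each term by $\C(\log m)/m^{s-r+1}$ times $\|f^*\|_{Z_{s+1}(\vgdf)}$, using: the convergence of $L_m^{\vphi}$ in Zygmund norm at the rate governed by $s$, the Favard inequality \eqref{Favard} and \eqref{Em-est-Zr} to convert best-approximation rates into Zygmund-norm rates, the kernel assumptions \eqref{ky-cond}–\eqref{h-cond2} for the $K,H$ parts exactly as in Theorem \ref{stability-convergence}, and the new ingredient $\|(I-L_m^{\vphi})M_{\sigma\vphi}f^*\|_{Z_{r-1}(\vgdf)}\le \C (\log m)/m^{s-r+1}\,\|\sigma\vphi\|_{Z_s}\|f^*\|_{Z_{s+1}(\vgdf)}$, which follows because $M_{\sigma\vphi}f^*\in Z_{s}(\vgdf)$ (product of a $Z_s$ function with a $Z_{s+1}$ function). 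The regularity $f^*\in Z_{s+1}(\vgdf)$ itself is obtained, as in Theorem \ref{stability-convergence}, by a bootstrap on the continuous equation: $f^*=(A^{\vphi})^{-1}D^{-1}(g-M_{\sigma\vphi}f^*-Kf^*-Hf^*)$ with the right-hand side in $Z_s$, using the mapping properties of $(DA^{\vphi})^{-1}$ on Zygmund scales. I expect the main obstacle to be the precise verification that $M_{\sigma\vphi}$ is bounded (and behaves well under interpolation) on the \emph{weighted} Zygmund spaces $Z_r(\vgdf)$ — i.e.\ controlling the $\vphi$-modulus of smoothness of a product $\sigma\vphi\cdot f$ in terms of those of the factors, including the behavior near $\pm1$ where the Jacobi weight $u$ is singular; this is where the hypothesis $\sigma\vphi\in Z_s$ (rather than merely $\sigma\in Z_s$) and the range restrictions $0\le\ga<\tfrac14$, $0\le\de<\tfrac14$ are used.
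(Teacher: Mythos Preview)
Your proposal identifies the correct new ingredient, namely the estimate
\[
\|(I-L_m^{\vphi})M_{\sigma\vphi}f\|_{Z_{r-1}(\vgdf)}\le \C\,\frac{\log m}{m^{s-r+1}}\,\|\sigma\vphi\|_{Z_s}\,\|f\|_{Z_s(\vgdf)},
\]
which is exactly what the paper isolates as Lemma~\ref{ConvM}, together with the boundedness and compactness of $M_{\sigma\vphi}$ on the Zygmund scale (the paper's Lemma~\ref{M-compact-1}). So the pieces you plan to use are the right ones.

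Where you diverge from the paper is in the \emph{organization} of the perturbation argument. You factor $\mathcal{B}_m=\mathcal{A}_m+L_m^{\vphi}M_{\sigma\vphi}$, invoke Theorem~\ref{stability-convergence} to make $\mathcal{A}_m$ invertible, and then treat $L_m^{\vphi}M_{\sigma\vphi}$ via a collectively-compact argument. The paper instead repeats the proof of Theorem~\ref{stability-convergence} verbatim with the \emph{continuous} operator $T=M_{\sigma\vphi}+DA^{\vphi}+K+H$ in place of $DA^{\vphi}+K+H$: one writes $T_m=T+(T_m-T)$ with
\[
T_m-T=(L_m^{\vphi}M_{\sigma\vphi}-M_{\sigma\vphi})+(L_m^{\vphi}K_m-K)+(L_m^{\vphi}H-H),
\]
observes that each summand tends to zero in operator norm $Z_r(\vgdf)\to Z_{r-1}(\vgdf)$ by Lemmas~\ref{(K-Km)fm}, \ref{(H-Hm)fm} and~\ref{ConvM}, and concludes by the same Neumann-series step used in Theorem~\ref{stability-convergence}. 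This is shorter and, more importantly, uses only the kernel hypothesis actually stated in Theorem~\ref{stability-convergence2}, namely $\mathrm{Ker}(M_{\sigma\vphi}+DA^{\vphi}+K+H)=\{0\}$.

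Your route has a genuine risk here: to apply Theorem~\ref{stability-convergence} and get $\mathcal{A}_m^{-1}$ uniformly bounded you need $\mathrm{Ker}(DA^{\vphi}+K+H)=\{0\}$, which is \emph{not} implied by $\mathrm{Ker}(M_{\sigma\vphi}+DA^{\vphi}+K+H)=\{0\}$. Whether ``the same assumptions of Theorem~\ref{stability-convergence}'' is meant to carry that old kernel condition along is at best ambiguous; the paper's direct argument avoids the issue entirely. Even granting both kernel conditions, the collectively-compact machinery you invoke is unnecessary overhead once you notice that $(L_m^{\vphi}M_{\sigma\vphi}-M_{\sigma\vphi})\to 0$ in norm. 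Your error-estimate paragraph is essentially correct (modulo the typo ``$\mathcal{B}g$'' for ``$g$'') and matches the paper's identity \eqref{rel2} with the extra $M_{\sigma\vphi}$ term; just note that the inverse there acts on the full Zygmund space, not merely on $\PP_{m-1}$.
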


\begin{theo}\label{condiz2}
Under \ the \ assumptions \ of \ Theorem \ref{stability-convergence2}, \ denoting by $\cond(\bar{\mathbf{A}}_m)$ the  condition number of the matrix $\bar{\mathbf{A}}_m$  in infinity norm, we get
\begin{small}
\begin{equation}
\label{cond2}\cond(\bar{\mathbf{A}}_m)\leq\! \C \! \left\|\left((L_m^{\vphi}M_{\sigma\vphi}+ DA^{\vphi}+L_m^{\vphi}K_m+L_m^{\vphi}H)_{\ |\PP_{m-1}}\right)^{-1}\right\|_{C_{\vgdf}\to C_{\vgdf}}\! m\log^3 m,
\end{equation}
\end{small}
where $\C\neq\C(m).$
\end{theo}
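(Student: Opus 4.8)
The plan is to follow the same route used for Theorem \ref{condiz}, adapting it to the presence of the diagonal block $\mathbf{\Gamma}_m$ coming from the multiplication operator $M_{\sigma\vphi}$. First I would write $\cond(\bar{\mathbf{A}}_m)=\|\bar{\mathbf{A}}_m\|_\infty\,\|\bar{\mathbf{A}}_m^{-1}\|_\infty$ and estimate the two factors separately. For $\|\bar{\mathbf{A}}_m\|_\infty$, since $\bar{\mathbf{A}}_m=\mathbf{\Gamma}_m+\mathbf{A}_m$, it suffices to bound $\|\mathbf{\Gamma}_m\|_\infty=\max_k|(\sigma\vphi)(x_k^{\vphi})|\le\|\sigma\vphi\|_\infty\le\C\|\sigma\vphi\|_{Z_s}$ (a constant, hence negligible) and $\|\mathbf{A}_m\|_\infty$, which is handled exactly as in the proof of Theorem \ref{condiz}: the factorization \eqref{Ambmdef} together with the bound $DA^{\rho}p_m^{\rho}=(m+1)p_m^{w}$ from \eqref{Dap} produces the factor $m$, while the quadrature/interpolation estimates based on Lemma \ref{Lag1} and Lemma \ref{Nevai}, the Gaussian-rule bounds for the Christoffel numbers, and the properties of the matrices $\mathbf{V}_m,\mathbf{Z}_m,\mathbf{D}_m,\mathbf{\Lambda}_m,\mathbf{U}_m$ contribute the remaining $\log^3 m$. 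The key point is that adding the bounded diagonal $\mathbf{\Gamma}_m$ does not worsen the order, so $\|\bar{\mathbf{A}}_m\|_\infty\le\C\,m\log^3 m$.

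For the inverse, I would exploit the fact established in Theorem \ref{stability-convergence2} that the finite-dimensional operator
\[
B_m:=L_m^{\vphi}M_{\sigma\vphi}+DA^{\vphi}+L_m^{\vphi}K_m+L_m^{\vphi}H
\]
restricted to $\PP_{m-1}$ is invertible for $m$ large with uniformly bounded inverse in the $Z_r(\vgdf)\to Z_{r-1}(\vgdf)$ norm, and a fortiori (since $Z_0(\vgdf)=C_{\vgdf}$ and the norms are comparable on $\PP_{m-1}$ up to polynomial-in-$m$ factors, via \eqref{norm2} and Favard/Bernstein-type inequalities) in the $C_{\vgdf}\to C_{\vgdf}$ norm, with a controlled loss. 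Writing $\mathbf{\bar a}_m$ in terms of the coefficient functional, the matrix $\bar{\mathbf{A}}_m$ is the representation of $B_m$ in the bases $\{\psi_k^{\vphi}\}$ (domain side) and point-evaluation at $\{x_i^{\vphi}\}$ weighted by $u\vphi$ (range side). Hence $\|\bar{\mathbf{A}}_m^{-1}\|_\infty$ is controlled by $\|B_m^{-1}\|_{C_{\vgdf}\to C_{\vgdf}}$ times the norms of the two change-of-representation maps: the map sending a vector of weighted nodal values $(u\vphi g)(x_i^{\vphi})$ to the function $L_m^{\vphi}g\in C_{\vgdf}$, whose norm is $\C\log m$ by the Lebesgue-constant estimate analogous to \eqref{lag1alal}, and the map sending $f_m=\sum_k\psi_k^{\vphi}\bar a_k\in\PP_{m-1}$ to its coefficient vector $\mathbf{\bar a}_m$, whose norm is again $\C\log m$ by the dual estimate for the basis $\{\psi_k^{\vphi}\}$ (here $\al=\tfrac12$, so $w=\rho=\vphi$ and the lemmas apply with the stated restriction $0\le\ga<\tfrac14$, $0\le\de<\tfrac14$). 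Collecting, $\|\bar{\mathbf{A}}_m^{-1}\|_\infty\le\C\log^2 m\,\|B_m^{-1}\|_{C_{\vgdf}\to C_{\vgdf}}$, and multiplying by the bound on $\|\bar{\mathbf{A}}_m\|_\infty$ gives \eqref{cond2}.

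The main obstacle I anticipate is bookkeeping the two change-of-basis operators carefully enough to be sure no extra power of $m$ sneaks in: one must verify that $\|\psi_k^{\vphi}\|$-type sums and the inverse nodal map really cost only $\log m$ each, which relies on the specific choice of the bases $\psi_i^{\vphi}$ (the Darboux-kernel representation) rather than the naive Lagrange fundamental polynomials — this is precisely the point flagged after \eqref{lag_rho}. A secondary technical nuisance is passing from the $Z_r\to Z_{r-1}$ uniform boundedness of $B_m^{-1}$ (Theorem \ref{stability-convergence2}) to a $C_{\vgdf}\to C_{\vgdf}$ statement on $\PP_{m-1}$; this should follow from the embedding $Z_{r-1}(\vgdf)\hookrightarrow C_{\vgdf}$ together with a polynomial inverse inequality controlling $\|\cdot\|_{Z_r(\vgdf)}$ by $\|\cdot\|_{C_{\vgdf}}$ on $\PP_{m-1}$, and one must check that the resulting $m$-dependence is absorbed into the claimed $m\log^3 m$ (equivalently, that the argument is organized so this inverse inequality is not needed at all, exactly as in Theorem \ref{condiz}). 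Apart from that, the diagonal perturbation $\mathbf{\Gamma}_m$ is harmless and the proof parallels that of Theorem \ref{condiz} line by line.
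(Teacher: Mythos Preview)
Your approach is the same as the paper's: reduce to Theorem~\ref{condiz} and check that the extra multiplication term is harmless. The paper handles the new piece at the operator level, bounding $\|L_m^{\vphi}M_{\sigma\vphi}\tilde\theta\|_{C_{\vgdf}}\le \C\log^2 m\,\|\theta\|_\infty$ via \eqref{lag1alal}, \eqref{Msf-bound} and \eqref{lagal1al}; your matrix-level observation $\|\mathbf{\Gamma}_m\|_\infty\le\|\sigma\vphi\|_\infty$ is an equivalent (and slightly cleaner) way to see the same thing.

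Your logarithm bookkeeping, however, does not add up. With your stated bounds $\|\bar{\mathbf{A}}_m\|_\infty\le\C m\log^3 m$ and $\|\bar{\mathbf{A}}_m^{-1}\|_\infty\le\C\log^2 m\,\|B_m^{-1}\|$, the product is $m\log^5 m$, not the $m\log^3 m$ in \eqref{cond2}. The source of the overcount is the change-of-representation maps: the map sending $f_m=\sum_k\psi_k^{\vphi}\bar a_k$ to its coefficient vector has norm $\le 1$, not $\log m$, because $\bar a_k=(u\vphi f_m)(x_k^{\vphi})$ (the $\psi_k^{\vphi}$ are normalized Lagrange fundamental polynomials, so coefficients are weighted nodal values). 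Only the reverse map---from nodal data to the interpolant in $C_{\vgdf}$---costs $\log m$, via the Lebesgue estimate of Lemma~\ref{Lag1}. With this correction the paper's counting is $\|\bar{\mathbf{A}}_m\|_\infty\le\C m\log^2 m$ (the factor $m\log m$ from \eqref{DAP-est} and one $\log m$ from \eqref{lagal1al}) and $\|\bar{\mathbf{A}}_m^{-1}\|_\infty\le\C\log m\,\|B_m^{-1}\|_{C_{\vgdf}\to C_{\vgdf}}$, whose product is exactly \eqref{cond2}. Your worry about passing from the $Z_r\to Z_{r-1}$ bound to a $C_{\vgdf}\to C_{\vgdf}$ bound is unnecessary: as in Theorem~\ref{condiz}, the norm $\|B_m^{-1}\|_{C_{\vgdf}\to C_{\vgdf}}$ is simply left as an explicit factor in the estimate and is never claimed to be uniformly bounded.
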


\begin{rem} Firstly we recall that the following  subspace of $L^2_\rho$
\[L_{\rho}^{2,r+1}=\left\{f\in L^2_{\rho} \ : \ \|f\|_{L_{\rho}^{2,r+1}}:= \left(\sum_{n=0}^\infty (1+n)^{2(r+1)} c_n^2\right)^\frac 12<+\infty\right\},\]
where $\{c_n\}_{n=0}^\infty$ are the Fourier coefficients of $f$ in the  orthonormal system $\{p_n^\rho\}_{n=0}^\infty$ w.r.t. the inner product \eqref{inner}, is embedded in the Zygmund space $Z_r(\vgdal)$ (see \cite{MR2}, \cite{F}), i.e.,
\[\|f\|_{Z_r(\vgdal)}\leq \C \|f\|_{L_{\rho}^{2,r+1}}, \quad \C\neq\C(f).\]
This observation could allow to deduce error estimates in $\|\cdot\|_{Z_r(u\rho)}$ starting from that obtained  in $\|\cdot\|_{L_{\rho}^{2,r+1}}.$ In fact, by using the estimate in \cite[Theorem 3.1]{CaCriJu}, one can  prove 
\[\|f^*-f_m^*\|_{Z_r(\vgdal)}\leq \C \|f^*-f_m^*\|_{L_{\rho}^{2,r+1}}\leq \frac{\C}{m^{s-r}}\|f^*\|_{L_{\rho}^{2,s+1}}, \quad \C\neq\C(m,f^*). \]
However, comparing the latter bound with the one in \eqref{convergence},  it is clear  that a direct estimate in Zygmund norm, let us get a better rate of convergence.
\end{rem}

Now, if $f^*$ is the solution of the equation (\ref{eqSigma0}) (or (\ref{eqSigma})) and $f_m^*$ is the solution of \eqref{eqap} (or \eqref{eqfinSigma}), we denote by $\zeta^*=f^*\v$ the exact solution of the initial Prandtl's equation \eqref{iniziale1} and by $\zeta_m^*:=f_m^*\v$
its  $m-$th approximation. By Theorems \ref{stability-convergence} and \ref{stability-convergence2} we can easily deduce the following
\begin{corol} Under the assumptions of Theorems \ref{stability-convergence} or \ref{stability-convergence2}, for any $\varepsilon> 0$, one has
\begin{equation}\label{stima-estrema}\|(\zeta^*-\zeta_m^*)\vgd\|_{\infty}=\|(f^*-f_m^*)\vgdal\|_{\infty} \leq \C \|f^*-f_m^*\|_{Z_{1+\varepsilon}(\vgdal)}\le  \C \ \frac{\log m}{m^{s-\varepsilon}},\end{equation}
where $\C\neq\C(m).$
\end{corol}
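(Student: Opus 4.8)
The plan is to deduce the Corollary as a direct consequence of the convergence estimates \eqref{convergence} and \eqref{convergence2}, together with the elementary embedding of Zygmund spaces into the weighted $C^0$ space. First I would fix $\varepsilon>0$ small enough that $1+\varepsilon<s+1$ (possible since $s>0$), so that Theorem \ref{stability-convergence} (resp. Theorem \ref{stability-convergence2}) applies with $r=1+\varepsilon$. The starting point is the trivial norm comparison $\|g\va\|_\infty = \|g\|_{C_{u\rho}} \le \|g\|_{Z_{1+\varepsilon}(u\rho)}$, which holds because the Zygmund norm \eqref{norm1} is the weighted sup-norm plus a nonnegative seminorm term; I would apply this to $g = f^*-f_m^*$. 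The same identity $\|(\zeta^*-\zeta_m^*)u\|_\infty = \|(f^*-f_m^*)u\rho\|_\infty$ follows at once from the definitions $\zeta^*=f^*\rho$, $\zeta_m^*=f_m^*\rho$ recalled just before the statement (and analogously with $\rho$ replaced by $\varphi$ in the $\sigma\not\equiv 0$ case, where $u\rho=u\varphi$ since $\al=\tfrac12$).

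Next I would invoke the error bound \eqref{convergence}: under the hypotheses of Theorem \ref{stability-convergence}, for $1<r<s+1$,
\[
\|f^*-f_m^*\|_{Z_r(u\rho)}\le \C\,\frac{\log m}{m^{s-r+1}}\,\|f^*\|_{Z_{s+1}(u\rho)}.
\]
Specializing to $r=1+\varepsilon$ gives the exponent $s-r+1=s-\varepsilon$, so that
\[
\|(\zeta^*-\zeta_m^*)u\|_\infty=\|(f^*-f_m^*)u\rho\|_\infty\le \C\,\|f^*-f_m^*\|_{Z_{1+\varepsilon}(u\rho)}\le \C\,\frac{\log m}{m^{s-\varepsilon}}\,\|f^*\|_{Z_{s+1}(u\rho)},
\]
and since $f^*\in Z_{s+1}(u\rho)$ is a fixed function, $\|f^*\|_{Z_{s+1}(u\rho)}$ is absorbed into a constant independent of $m$, which yields \eqref{stima-estrema}. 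For the case of equation \eqref{eqSigma} the argument is identical, using \eqref{convergence2} in place of \eqref{convergence} and noting that with $\al=\tfrac12$ one has $w=\rho=v^{1/2,1/2}=\varphi$, so $C_{u\rho}=C_{u\varphi}$ and the two formulations agree; this is why the chain of inequalities in \eqref{stima-estrema} is written with $u\rho$.

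There is essentially no obstacle here: the only points requiring a word of care are (i) checking that $\varepsilon$ can indeed be chosen with $1+\varepsilon$ in the admissible range $(1,s+1)$, which is immediate from $s>0$; (ii) the harmless abuse that the constant $\C$ now also depends on $\|f^*\|_{Z_{s+1}(u\rho)}$ and on $\varepsilon$, consistent with the convention $\C\neq\C(m)$ stated in Section 2; and (iii) verifying that the identification $u\rho = u\varphi$ in the $\sigma\not\equiv 0$ setting is legitimate, which follows from \eqref{defpesi} at $\al=\tfrac12$. Thus the Corollary is just the combination of the embedding $\|\cdot\|_{C_{u\rho}}\le\|\cdot\|_{Z_{1+\varepsilon}(u\rho)}$ with Theorems \ref{stability-convergence} and \ref{stability-convergence2} evaluated at $r=1+\varepsilon$.
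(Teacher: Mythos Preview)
Your argument is correct and is exactly the derivation the paper has in mind: the Corollary is stated immediately after the remark ``By Theorems \ref{stability-convergence} and \ref{stability-convergence2} we can easily deduce the following'' and is given no separate proof, so your chain (the identity $\zeta^*-\zeta_m^*=(f^*-f_m^*)\rho$, the trivial embedding $\|\cdot\|_{C_{u\rho}}\le\|\cdot\|_{Z_{1+\varepsilon}(u\rho)}$ from \eqref{norm1}, and then \eqref{convergence}/\eqref{convergence2} with $r=1+\varepsilon$) is precisely the intended justification. The only cosmetic slip is the typo $\|g\va\|_\infty$, which should read $\|g\,u\rho\|_\infty$.
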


\begin{rem}\label{rem3.2} Estimate \eqref{stima-estrema} will be useful in the practical evaluation of the error in the numerical tests, since the discrete absolute error on the left hand side is what we want in order to deduce the number of the exact digits we can reach.
From \eqref{stima-estrema} we can deduce that the convergence order of the proposed method is at least $s-\varepsilon$.
We recall that for functions belonging to $Z_{s+1}(\vgdal)$ the convergence order of the polynomial of best approximation is $s+1$ (see \eqref{Em-est-Zr}).
\end{rem}

\begin{rem}

As you can see, the estimates of $\cond(\mathbf{A}_m)$ and $\cond(\bar{\mathbf{A}}_m)$ given in Theorems \ref{condiz} and \ref{condiz2} are not complete, since we are not able to state the uniformly boundedness of the norms in \eqref{cond1} and \eqref{cond2}.
Nevertheless, the numerical evidences provided by the numerical tests (see Section 4) encourage us to believe that such norms do not increase w.r.t. $m$.
\end{rem}

We conclude by showing some  weakly singular kernels satisfying  \eqref{h-cond1}-\eqref{h-cond2}.
\begin{prop}\label{remark1} Under the assumptions $0\le \ga,\de<1$ and $-1<\mu<0$, the  kernels
\begin{equation*}
h(x,y)=\begin{cases}
|x-y|^\mu,\\
|x-y|^\mu \sgn(x-y),\\
\log|x-y|,\\
|x-y|^\mu \log|x-y|,\end{cases}
\end{equation*}
satisfy \eqref{h-cond1} and, with  $A(t)$  as  in (\ref{h-cond2}), next estimates hold
\begin{equation}\label{xi-value}A(\tau)\leq \C \begin{cases}
\tau^{\mu+1} & h(x,y)=|x-y|^\mu \\
\tau^{\mu+1} & h(x,y)=|x-y|^\mu \sgn(x-y)\\
\tau \log \tau^{-1} & h(x,y)=\log|x-y|\\
\tau^{\mu+1}\log \tau^{-1} & h(x,y)=|x-y|^\mu\log|x-y|.
\end{cases}
\end{equation}
\end{prop}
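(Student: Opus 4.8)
The plan is to verify the two conditions \eqref{h-cond1} and \eqref{h-cond2} directly for each of the four kernels, exploiting the fact that they all behave essentially like $|x-y|^\mu$ (up to logarithmic factors) and that the weights $u^{\pm 1}=v^{\gamma,\delta}$, $\varphi$ are uniformly bounded above and below away from the endpoints on the interval $I_\tau$. First I would dispose of \eqref{h-cond1}: since $0\le\gamma,\delta<1$, the weight $u^{-1}(x)=(1-x)^{-\gamma}(1+x)^{-\delta}$ is integrable on $(-1,1)$, and for $-1<\mu<0$ the function $x\mapsto|x-y|^\mu$ is integrable uniformly in $y$ (a standard splitting of the integral into the region $|x-y|\le\frac12$, where $\int|x-y|^\mu u^{-1}(x)dx$ is controlled by $\int_0^1 t^\mu\,dt<\infty$ plus the boundedness of $u^{-1}$ near $y$, and the region $|x-y|>\frac12$, where $|x-y|^\mu\le 2^{-\mu}$); the logarithmic and mixed kernels are handled the same way since $\log|x-y|$ and $|x-y|^\mu\log|x-y|$ are dominated by $|x-y|^{\mu'}$ for any $\mu<\mu'<0$ near the singularity and are bounded away from it. Multiplying by $(u\varphi)(y)$, which is bounded on $[-1,1]$, gives \eqref{h-cond1}.

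The substantive part is \eqref{h-cond2}. Here I would write out $\Delta_{\tau\varphi(y)}h(x,y)=h(x,y+\tfrac{\tau\varphi(y)}2)-h(x,y-\tfrac{\tau\varphi(y)}2)$ (the case $k=1$ of the difference operator), set $\eta=\tfrac{\tau\varphi(y)}2$, note $\eta\le\tau$, and estimate
\[
\int_{-1}^1\bigl|h(x,y+\eta)-h(x,y-\eta)\bigr|\,u^{-1}(x)\,dx.
\]
For $h(x,y)=|x-y|^\mu$ the key elementary inequality is that for $-1<\mu<0$ one has $\bigl||a|^\mu-|b|^\mu\bigr|\le C\,|a-b|^{\mu+1}\cdot(\text{something integrable})$ in a form suitable for integration; concretely I would split the $x$-integral into the ``near'' zone $|x-y|\le 2\eta$ and the ``far'' zone $|x-y|>2\eta$. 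In the near zone both $|x-y\pm\eta|^\mu$ are integrated separately, each contributing $\int_{0}^{3\eta}t^\mu\,dt\sim\eta^{\mu+1}$, and $u^{-1}$ is bounded there (since $y\in I_\tau$ keeps us away from $\pm1$). In the far zone $|x-y|>2\eta$ one uses the mean value theorem: $\bigl||x-y+\eta|^\mu-|x-y-\eta|^\mu\bigr|\le C\eta\,|x-y|^{\mu-1}$, and $\int_{|x-y|>2\eta}|x-y|^{\mu-1}dx\sim \eta^{\mu}$ (for $\mu<0$ the integral near the cutoff dominates), giving again $\eta\cdot\eta^{\mu}=\eta^{\mu+1}$. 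Summing and using $\eta\le\tau$, $(u\varphi)(y)\le C$ yields $A(\tau)\le C\tau^{\mu+1}$. The sign kernel is treated identically since the sign only creates an extra bounded jump that is already covered by the near-zone analysis (the far-zone difference of signs vanishes). For $h=\log|x-y|$ the far-zone MVT estimate becomes $\bigl|\log|x-y+\eta|-\log|x-y-\eta|\bigr|\le C\eta|x-y|^{-1}$ and $\int_{2\eta<|x-y|}|x-y|^{-1}dx\sim\log\tau^{-1}$, while the near zone gives $\int_0^{3\eta}|\log t|\,dt\sim\eta\log\eta^{-1}$; combining yields $A(\tau)\le C\tau\log\tau^{-1}$. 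The mixed kernel $|x-y|^\mu\log|x-y|$ combines the two computations: near zone $\int_0^{3\eta}t^\mu|\log t|dt\sim\eta^{\mu+1}\log\eta^{-1}$, far zone MVT gives $C\eta\int_{2\eta}^{\cdot}|x-y|^{\mu-1}(1+|\log|x-y||)dx\sim\eta\cdot\eta^{\mu}\log\eta^{-1}$, hence $A(\tau)\le C\tau^{\mu+1}\log\tau^{-1}$.

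The main obstacle I anticipate is bookkeeping the endpoint behaviour cleanly: one must make sure that when $y$ ranges over $I_\tau=[-1+(2\tau)^2,1-(2\tau)^2]$ and $x$ ranges over $(-1,1)$, the shifted points $y\pm\tfrac{\tau\varphi(y)}2$ stay inside $(-1,1)$ (they do, because $\varphi(y)=\sqrt{1-y^2}$ is exactly the factor that keeps the shift proportional to the distance to the boundary), and that the weight $u^{-1}(x)$ does not spoil the near-zone estimates — for this one observes that on the near zone $|x-y|\le 2\eta$ we have $x$ within $O(\tau)$ of $y\in I_\tau$, so $1\pm x\ge c\tau^2$, whence $u^{-1}(x)\le C\tau^{-2\max\{\gamma,\delta\}}$; since this factor is polynomial in $\tau^{-1}$ while the near-zone integrand already gives $\eta^{\mu+1}=O(\tau^{\mu+1})$, and $(u\varphi)(y)$ carries a compensating $\varphi(y)\le C\tau$... one checks that the exponents still balance out to the claimed $\tau^{\mu+1}$ — this is exactly the kind of delicate exponent accounting that the conditions \eqref{ipotesitheo3.2} on $\gamma,\delta$ (here weakened to $0\le\gamma,\delta<1$) are designed to make work, and verifying it carefully is the crux of the proof. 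Everything else is a routine application of the mean value theorem and splitting of integrals.
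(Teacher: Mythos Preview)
Your strategy --- near zone/far zone split, direct integration close to the singularity and the mean value theorem away from it --- is exactly the mechanism the paper uses, and your treatment of the three variant kernels is correct in outline. The place where your write-up is genuinely incomplete, and where the paper invests all of its effort, is precisely the endpoint accounting you flag in your last paragraph.

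The specific gap is your sentence ``$u^{-1}$ is bounded there (since $y\in I_\tau$ keeps us away from $\pm1$)''. This is false when $y$ sits at the edge of $I_\tau$: if $1+y=4\tau^2$ then the near zone $|x-y|\le 2\eta\sim\tau^2$ forces $1+x\sim\tau^2$ as well, so $u^{-1}(x)\sim\tau^{-2\delta}$ blows up. The compensation you gesture at --- the factor $(u\varphi)(y)$ --- does work, but only if you keep track of it throughout rather than discarding it via $(u\varphi)(y)\le\C$. Similarly, in your far zone the bound $\int_{|x-y|>2\eta}|x-y|^{\mu-1}dx\sim\eta^\mu$ silently drops $u^{-1}(x)$; when $y$ is near $-1$ the piece of the far zone lying in $[-1,y-2\eta]$ has both $|x-y|^{\mu-1}$ large and $(1+x)^{-\delta}$ large, and the two singularities interact.

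The paper resolves this by refining your two-zone split into six pieces (for $y\in[-1+4\tau^2,0]$): it separates a strip $[-1,-1+\tfrac{1+y}{2}]$ adjacent to the endpoint, where the MVT bound $|\Delta_{\tau\varphi_1(y)}h(x,y)|\le\tau\varphi_1(y)\,(y-\tfrac\tau2\varphi_1(y)-x)^{\mu-1}$ is combined with $(y-\tfrac\tau2\varphi_1(y)-x)^{\mu-1}\le(\tfrac{1+y}{4})^{\mu-1}$ and the explicit integral $\int_{-1}^{-1+(1+y)/2}(1+x)^{-\delta}dx\sim(1+y)^{1-\delta}$; after multiplying by $(u\varphi)(y)\sim(1+y)^{\delta+1/2}$ the powers of $(1+y)$ collapse to $(1+y)^{\mu+1}\le\C$, giving a contribution $\le\C\tau\le\C\tau^{\mu+1}$. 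The remaining pieces ($S_2$--$S_5$) sit where $1+x\sim 1+y$, so $u^{-1}(x)\sim u^{-1}(y)$ cancels against the $u(y)$ in the prefactor, and your near/far calculus goes through cleanly after the change of variable $y-x=t\sqrt{1+y}$. In short: your plan is right, but to close it you must carry the factor $(u\varphi)(y)$ through every estimate and isolate the endpoint strip as a separate piece --- a two-zone split is not fine enough.
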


\section{Numerical Tests}

Now we show the performance of our methods by some numerical examples, where the exact solution $\zeta$ of \eqref{iniziale1} will be approximated by $\zeta_m:=f_m\rho$, with $f_m$   given in \eqref{fm1} or \eqref{fm2}.
When the $\zeta$ is unknown we will retain the approximation $\zeta_{1024}$ as exact.

In the tables we will report, for each  $m$, the maximum absolute error attained by  $\zeta_m$ at the  grid  points  $y_i=-1+\frac i{100}, i=0,\ldots,200,$ i.e.
\begin{equation}\label{errore_def}Err_m=\max_{i=0,\ldots,200}u(y_i)|\zeta(y_i)-\zeta_m(y_i)|, \quad err_m=\max_{i=0,\ldots,200}u(y_i)|\zeta_{1024}(y_i)-\zeta_m(y_i)|. \end{equation}

In order to make comparisons with other methods existing in the literature, in Example 1 we show the numerical results obtained approximating an IDE considered in \cite{Calio} and in Example 2 we compare our results with those achieved with the method in \cite{CaCriJuLu}.

Moreover, to verify the effectiveness of our theoretical estimates, in Examples 2 and 3 we consider suitable test IDEs and we will report the Estimated Order of Convergence (EOC) for increasing values of $m$, i.e.
\[EOC_m=\frac{\log(err_m/err_{2m})}{\log 2}.\]

According to Theorems \ref{condiz} and \ref{condiz2}, the condition numbers of the linear systems increase with $m$ at least as $m \log^3 m$. Presuming a more general increasing behaviour of the condition numbers of order $m^\nu,$ with $\nu>0$, for the Examples 2 and 3,  we will report for each $m$ the following  estimators of $\nu$
\[\nu(m)=\frac{\log\left(\frac{\cond(\mathbf{A}_{2m})}{\cond(\mathbf{A}_m)}\right)}{\log 2} \quad \mbox{and} \quad \overline{\nu}(m)=\frac{\log\left(\frac{\cond(\bar{\mathbf{A}}_{2m})}{\cond(\bar{\mathbf{A}}_m)}\right)}{\log 2},\]
respectively.

The values $\cond(\mathbf{A}_m)$ and $\cond(\bar{\mathbf{A}}_m)$ are computed using the \texttt{MatLab} function \texttt{cond.m} with parameter $P=inf$.

Finally, in Subsection 4.1 we show how our numerical method can be used to approximate the solutions of some special IDEs of Prandtl's type coming from some problems in aerodynamics.

All the computations were performed in $16-$digits arithmetic.

\begin{es} Let us consider the IDE of Prandtl's type \eqref{eqSigma} with
{\small{$$\sigma(y)\!=\!2,\ \ k(x,y)\!\equiv\!0,\  h=\log|x-y|, $$}}
and $g(x)$ such that the exact solution is $\zeta(x)=\sqrt{1-x^2}f(x)$ with $f(x)=\sqrt{(1-x^2)^3}$.
This equation has been considered in \cite{Calio}. The authors show (see \cite[Table 2]{Calio}) only the approximations of the solution obtained for $n=15$: they get at most 2 exact decimal digits. As one can see inspecting Table \ref{tab:b1}, our results are more satisfactory. In fact with $n=16$ we get 3 exact decimal digits and with $n=512$ we attain $11$ exact decimal digits.

\begin{table}[h!]
\caption{Example  4.1}
\label{tab:b1}
\centering
\begin{center}
\begin{tabular}{|c|l|l|} \hline
$m$  & $\cond(\mathbf{\bar A}_m)$  & $Err_m$\\ \hline
$8 $  & $4.9982e+00 $ &$1.5099e-03 $ \\ \hline
$16$  & $9.0130e+00 $ &$7.0718e-05 $ \\ \hline
$32$  & $1.6870e+01 $ &$1.6872e-06 $ \\ \hline
$64$  & $3.2465e+01 $ &$4.5720e-08 $  \\ \hline
$128$ & $6.3581e+01 $ &$8.8290e-10 $ \\ \hline
$256$ & $1.2576e+02 $ &$2.5805e-11 $ \\ \hline
$512$ & $2.5011e+02 $ &$6.4149e-13 $ \\ \hline
\end{tabular}
\end{center}
\end{table}

The same integral equation has been considered in \cite{Calio} also with $g(x)$ such that the exact solution is $\zeta(x)=\sqrt{1-x^2}f(x)$ with $f(x)=x$. Applying their method with $n=35$ the authors get approximations of the solution with at most 3 exact decimal digits. On the contrary, our method allows us to attain approximations of the solution with the machine precision by solving a linear system of order $n=2$.

\end{es}

\begin{es}
Now we consider the equation \eqref{eqSigma0} with $\al=\frac 14$,
$$k(x,y)=\left|\cos\left(y-\frac \pi 4\right)\right|^\frac 92+|\sin(x)|^\frac 72, \ h(x,y)=\left|x-y\right|^{-\frac 13}, \ g(y)=|y|^{\frac{11}2}.$$
The solution is $\zeta(x)=(1-x)^\frac 14 (1+x)^\frac 34f(x)$, $f$ unknown. Here, choosing $\ga=\frac 18$ and $\de=0$ (according to \eqref{ipotesitheo3.2}), $k$ satisfies \eqref{ky-cond} with $s=\frac 92$, $h$ satisfies \eqref{h-cond2} with $s=\frac 23$ (see Proposition \ref{remark1}) and $g\in Z_{\frac {11}2}(v^{\frac 58,\frac 12})$. Thus, by Theorem \ref{stability-convergence}, $f\in Z_{\frac 53}(v^{\frac 38,\frac 34})$ and, by Remark \ref{rem3.2}, the error behaves at least as $\frac{\log m}{m^{\frac 23-\varepsilon}}$.  This slow convergence is confirmed  inspecting  Table \ref{tab:b2}. In fact, the arithmetic mean of the estimated orders of convergence $EOC_m$ is almost $1.1342$. In this case the estimator $\nu(m)$ shows that $\nu\sim 1.00051$.

Note that, the integrals $c_j$ in \eqref{Hxi} have been computed using the recurrence relation showed in \cite[p. 333]{MRT}.

\begin{table}[h!]
\caption{Example  4.2}
\label{tab:b2}
\begin{center}
\begin{tabular}{|c|l|l|l|l|} \hline
$m$  & $\cond(\mathbf{A}_m)$ & $\nu(m)$ & $err_m$& $EOC_m$\\ \hline
$8$  & $5.5777e+00 $&          & $3.5841e-02 $&   \\ \hline
$16$ & $1.1021e+01 $& $9.82621e-01 $& $2.1644e-02 $& $0.7276$\\ \hline
$32$ & $2.1911e+01 $& $9.91306e-01 $& $9.3647e-03 $& $1.2086$\\ \hline
$64$ & $4.3681e+01 $& $9.95335e-01 $& $4.7068e-03 $& $0.9924$\\ \hline
$128$& $8.7390e+01 $& $1.00044e+00 $& $2.2208e-03 $& $1.0836$\\ \hline
$256$& $1.7485e+02 $& $1.00058e+00 $& $9.5749e-04 $& $1.2137$\\ \hline
$512$& $3.4982e+02 $& $1.00051e+00 $& $3.2044e-04 $& $1.5791$\\ \hline
\end{tabular}
\end{center}
\end{table}

Applying the numerical method proposed in \cite[p. 160]{CaCriJuLu} for the numerical resolution of the above integral equation you get the results presented in Table \ref{tab:b22}. As you can see, since the obtained linear systems have higher condition numbers, no correct digits are achieved for the approximations of the solution.

\begin{table}[h!]
\caption{Example  4.2: Numerical results obtained using the method in \cite{CaCriJuLu}}
\label{tab:b22}
\begin{center}
\begin{tabular}{|c|l|l|} \hline
$m$  & $\cond(\mathbf{A}_m)$  & $err_m$\\ \hline
$8 $  & $2.0128e+01 $ &$6.1062e+01 $ \\ \hline
$16$  & $9.3696e+01 $ &$6.0582e+01 $ \\ \hline
$32$  & $4.8291e+02 $ &$5.9618e+01 $ \\ \hline
$64$  & $2.6155e+03 $ &$5.7693e+01 $  \\ \hline
$128$ & $1.4510e+04 $ &$5.3846e+01 $ \\ \hline
$256$ & $8.1311e+04 $ &$4.6153e+01 $ \\ \hline
$512$ & $4.5778e+05 $ &$3.0769e+01 $ \\ \hline
\end{tabular}
\end{center}
\end{table}

\end{es}

\vspace*{-0.3cm}

\begin{es} Consider the integral equation \eqref{eqSigma} with
{\small{$$(\sigma\vphi)(y)=y^2+1,\ \ k(x,y)=\frac{\cos(x+y)}{(x^2+y^2+20)^2}, \ h\equiv 0, \ g(y)=\left|y+\frac 3{10}\right|^\frac 72+y\sin(y).$$}}
In this case the  solution has the form  $\zeta(x)=\sqrt{1-x^2}f(x),$  $f$  unknown. According to \eqref{ipotesitheo3.2} we take $\ga=\de=0$.
Since  $\sigma\vphi\in Z_{s}(\vphi)$ for any $s>0$, $k$ satisfies \eqref{ky-cond} for any $s>0$ and $g\in Z_{\frac 72}(\vphi)$, by Theorem \ref{stability-convergence2}, $s=\frac 72$ and therefore $f\in Z_{\frac 92}(\vphi).$ So,  according to Remark \ref{rem3.2}, the errors behave at least as $\frac{\log m}{m^{\frac 72-\varepsilon}}$. By  Table \ref{tab:b3}, we can conclude that the theoretical expectations are verified, being the arithmetic mean of the $EOC_m$  $\sim 3.9343$. 
In this case,  we have $\cond(\bar{\mathbf{A}}_m)\sim m $ at most.

\begin{table}[h!]
\caption{Example  4.3}
\label{tab:b3}
\begin{center}
\begin{tabular}{|c|l|l|l|l|} \hline
$m$  & $\cond\left(\bar{\mathbf{A}}_m\right)$ & $\overline{\nu}(m)$ & $err_m$& $EOC_m$\\ \hline
$8$  & $4.9498e+00 $&          & $9.7163e-05 $&   \\ \hline
$16$ & $9.1339e+00 $& $8.8384e-01 $& $5.3368e-06 $& $4.18634 $\\ \hline
$32$ & $1.7478e+01 $& $9.3631e-01 $& $3.1042e-07 $& $4.10367 $\\ \hline
$64$ & $3.4150e+01 $& $9.6627e-01 $& $1.5510e-08 $& $4.32298 $\\ \hline
$128$& $6.7481e+01 $& $9.8259e-01 $& $7.4500e-10 $& $4.37980 $\\ \hline
$256$& $1.3413e+02 $& $9.9114e-01 $& $5.1794e-11 $& $3.84638 $\\ \hline
$512$& $2.6744e+02 $& $9.9552e-01 $& $7.6090e-12 $& $2.76699 $\\ \hline
\end{tabular}
\end{center}
\end{table}
\end{es}

\subsection{An application}
The Prandtl's equation (see \cite{dragos1},\cite{dragos2})
\begin{equation}\label{pran_1}\beta C(z)=\frac{\alpha(z)}2\int_{-b}^b \frac{C(\eta)}{(\eta-z)^2}d\eta+j(z),\end{equation}
with the zero boundary  conditions $C(\pm b)=0$,
governs the (unknown) circulation air flow  $C(y)$ along the contour of a plane wing profile.
The constant $\beta=\sqrt{1-M^2},$ where $M$ the Mach number in undisturbed motion, $j,\alpha$ are  given functions  depending on the geometry of the wing and the solution  $C(z)=\sqrt{b^2-z^2}c(z)$.

\subsubsection{Elliptic  wing} In this case, being $x^2+ z^2/b^2=1$ the ellipsis equation,  $2b$ is the wingspan, 
  $$\alpha(z)=b^{-1}\sqrt{b^2-z^2},\quad j(z)=2\pi \alpha(y) \epsilon,$$
with $\epsilon$  acute angle  between the direction of the relative wind and the chord of the wing (the angle of attack).

Introducing the changes of variable $z=by,\ \eta=bx$, we have the equivalent equation %
\begin{equation}\label{pran_2}\beta \zeta(y)=\frac{\tilde \alpha(y)}{2b}\int_{-1}^1 \frac{\zeta(x)}{(x-y)^2}dx+\tilde j(y),\end{equation}
with $\zeta(y)=\sqrt{1-y^2}d_1(y)$ and therefore
\[\frac{2b\beta}\pi d_1(y)-\frac{1}{\pi}\int_{-1}^1 \frac{d_1(x)}{(x-y)^2}\varphi(x)dx=4b\epsilon.\]
Setting $$\sigma_1(y)=\frac{2b\beta}{\pi \varphi(y)} ,\ \rho(y)=\varphi(y),\ g_1(y)=4b\epsilon$$
we have to solve  
\begin{equation}\label{pran_4}(M_{\displaystyle{{\sigma_1}}\vphi}+DA^{\vphi})d_1(y)=g_1(y).\end{equation}
The exact solution is known in this case $$\tilde C(y)=\sqrt{1-y^2}d_1(y)=\sqrt{1-y^2}\frac{4\epsilon b}{1+\frac{2b\beta}\pi}.$$
We have tested our method selecting $\beta=1, b=10$  and choosing two different values for the angle of attack, $\epsilon=0.1$ and $\epsilon=0.0872$. In both the cases, since the solution belongs to $Z_s(\vphi)$ for any $s>1$,   the machine precision is attained by solving a linear system of order $m=2$.
We point out that in \cite{dragos2}, for the same tests,  by using a discretization  based on a $N-th$ Gauss rule, only two exact digits are achieved   with   $N=50$.

\subsubsection{Rectangular wing}
In this case $2b$ is the length of the rectangular's largest dimension and  $$\alpha(z)=1,\quad j(z)=2\pi \epsilon.$$
By (\ref{pran_2}) with $\zeta(y)=\sqrt{1-y^2}d_2(y)$
\[\frac{2b\beta}\pi d_2(y)\varphi(y)-\frac{1}{\pi}\int_{-1}^1 \frac{d_2(x)}{(x-y)^2}\varphi(x)dx=4b\epsilon,\]
and setting $$\sigma_2(y)=\frac{2b\beta}{\pi} ,\ \rho(y)=\varphi(y),\ g_2(y)=4b\epsilon$$
the equation can be rewritten
\begin{equation}\label{pran_5}(M_{{\sigma_2}\vphi}+DA^{\vphi})d_2(y)=g_2(y).\end{equation}
For this case the exact solution is unknown.

Since $\sigma_2\varphi\in Z_{1},$ according to Remark \ref{rem3.2}, the error behaves as $\mathcal{O}\left(\frac{\log m}{m^{1-\varepsilon}} \right)$. Inspecting Table \ref{tab:b4}, one can see that the numerical results are better than the expected ones, as order.

\vspace*{-0.5cm}
\begin{table}[h!]
\caption{Rectangular wing}
\label{tab:b4}
\centering
\begin{center}
\begin{tabular}{|c|l|l|} \hline
$m$  & $\cond(\mathbf{\bar A}_m)$  & $err_m$\\ \hline
$8 $  & $2.9237e+00 $ &$1.0186e-03 $ \\ \hline
$16$  & $4.8340e+00 $ &$4.5344e-05 $ \\ \hline
$32$  & $8.3045e+00 $ &$1.6127e-06 $ \\ \hline
$64$  & $1.5198e+01 $ &$5.7771e-08 $  \\ \hline
$128$ & $2.8963e+01 $ &$2.1943e-09 $ \\ \hline
$256$ & $5.6503e+01 $ &$8.7550e-11 $ \\ \hline
\end{tabular}
\end{center}
\end{table}

\section{The proofs}

\subsection{Proof of Theorem  \ref{teo-sigma0}}

In order to prove the theorem  we need to study the mapping properties of the operators $D, A^{\rho}, K$ and $H$.
To this end for $0<\al<1$ we consider the following subspace of $Z_r(u)$
\[Z_{r,0}(u):=\left\{f\in Z_r(u) \ : \ \int_{-1}^1 f(x)\rho^{-1}(x)dx=0\right\},\ \|f\|_{Z_{r,0}(u)}:= \|f\|_{Z_{r}(u)}.\]

The following lemma states the boundedness of the operator $D: Z_{r}(\vgd) \to  Z_{r-1}(\vgd\vphi), r>1$.
\begin{lemma}\label{exlemma1}
For $r> 1 $
\begin{equation}
\label{ff'equiv}f' \in  Z_{r-1}(u\vphi) \quad \Leftrightarrow \quad f\in Z_{r}(u),
\end{equation}
and
\begin{equation}\label{normequiv}\|f'\|_{Z_{r-1}(\vgd\vphi)}\leq \C \|f\|_{Z_{r}(\vgd)}, \quad \C\neq \C(f).\end{equation}
In particular,  \eqref{normequiv} is not true for $r=1$.
\end{lemma}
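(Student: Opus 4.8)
## Proof proposal

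The plan is to reduce everything to the norm equivalence \eqref{norm2} between $\sup_{t>0}\Omega_\vphi^k(f,t)_u/t^r$ and $\sup_{i\ge 0}(1+i)^r E_i(f)_u$, so that the Zygmund membership is entirely controlled by the decay rate of best approximation errors. The key classical input I would invoke is the fact that differentiation roughly shifts this decay rate by one, together with the companion fact that a polynomial of degree $m$ (a near-best approximant) has a derivative of degree $m-1$, and that good polynomial approximation of $f$ in $C_u$ yields good approximation of $f'$ in $C_{u\vphi}$ — essentially a Bernstein-type inequality with the Ditzian--Totik weight $\vphi$, which is exactly why the weight changes from $u$ to $u\vphi$. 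Concretely, I would use that for $P_m \in \PP_m$ one has $\|P_m' \vphi u\|_\infty \le \C m \|P_m u\|_\infty$ and, more usefully, the ``converse'' direction packaged in the literature on weighted moduli (Ditzian--Totik, or the Mastroianni--Milovanović monograph cited as \cite{mastromilo}): $E_{m-1}(f')_{u\vphi} \le \C\, m\, E_{m-1}(f)_u$ up to the usual telescoping, and conversely $E_m(f)_u \le \frac{\C}{m} E_{m-1}(f')_{u\vphi}$ plus a constant-term correction.

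For the forward implication $f \in Z_r(u) \Rightarrow f' \in Z_{r-1}(u\vphi)$ together with the norm bound \eqref{normequiv}, I would take a sequence $P_m^*$ of best approximants to $f$ in $C_u$ with $\|(f-P_m^*)u\|_\infty = E_m(f)_u \le \C m^{-r}\|f\|_{Z_r(u)}$ (Favard, \eqref{Favard}). Then $(P_m^*)'$ is a polynomial of degree $m-1$, and I would estimate $E_{m-1}(f')_{u\vphi}$ by a telescoping argument: write $f' - (P_{2^k}^*)' = \sum_{j\ge k}\big((P_{2^{j+1}}^*)' - (P_{2^j}^*)'\big)$, apply the weighted Bernstein inequality $\|(Q_{2^{j+1}})'\vphi u\|_\infty \le \C\, 2^{j+1}\|Q_{2^{j+1}} u\|_\infty$ to the difference polynomials $Q_{2^{j+1}} := P_{2^{j+1}}^* - P_{2^j}^*$ (degree $\le 2^{j+1}$, and $\|Q_{2^{j+1}} u\|_\infty \le \C\, 2^{-jr}\|f\|_{Z_r(u)}$ by the triangle inequality), and sum the resulting geometric-type series $\sum_{j\ge k} 2^{j+1} 2^{-jr}$, which converges precisely because $r>1$ and gives $\C\, 2^{-k(r-1)}\|f\|_{Z_r(u)}$. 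Hence $E_i(f')_{u\vphi} \le \C\, i^{-(r-1)}\|f\|_{Z_r(u)}$, and \eqref{norm2} (applied with the weight $u\vphi$ and order $r-1$, noting $Z_{r-1}(u\vphi) = Z_{r-1,k}(u\vphi)$ is independent of $k\ge r-1$) yields $f' \in Z_{r-1}(u\vphi)$ with $\|f'\|_{Z_{r-1}(u\vphi)} \le \C\|f\|_{Z_r(u)}$, which is \eqref{normequiv}.

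For the reverse implication $f' \in Z_{r-1}(u\vphi) \Rightarrow f \in Z_r(u)$, I would reconstruct $f$ by integration. Given best approximants $R_m$ to $f'$ in $C_{u\vphi}$ with $\|(f'-R_m)u\vphi\|_\infty = E_m(f')_{u\vphi} \le \C\, m^{-(r-1)}\|f'\|_{Z_{r-1}(u\vphi)}$, set $\widetilde P_{m+1}(x) := f(0) + \int_0^x R_m(t)\,dt \in \PP_{m+1}$; then $f(x) - \widetilde P_{m+1}(x) = \int_0^x (f'(t)-R_m(t))\,dt$, and one bounds $|(f-\widetilde P_{m+1})(x)|\,u(x)$ using $|f'(t)-R_m(t)| \le E_m(f')_{u\vphi}\,(u\vphi)^{-1}(t)$ and the elementary weighted estimate $u(x)\int_0^x (u\vphi)^{-1}(t)\,dt \le \C$ — this is where the Jacobi-weight exponents $\ga,\de \ge 0$ and the endpoint behaviour of $\vphi^{-1}$ matter; for $\ga,\de>0$ the integral $\int_0^x (u\vphi)^{-1}$ may grow like $u^{-1}(x)$ near $\pm1$, but it is compensated by the factor $u(x)$, uniformly in $x$. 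This gives $E_{m+1}(f)_u \le \C\, m^{-(r-1)}\cdot\frac1m \cdot (\text{something})$ — more carefully one repeats the telescoping trick to get the clean bound $E_m(f)_u \le \C\, m^{-r}\|f'\|_{Z_{r-1}(u\vphi)}$, and then \eqref{norm2} again gives $f \in Z_r(u)$. Finally, for the last sentence, I would exhibit a concrete counterexample showing \eqref{normequiv} fails at $r=1$: take $f$ with $E_i(f)_u \sim i^{-1}$ (so $f\in Z_1(u)$), e.g. a lacunary-type series of polynomials, for which the telescoping sum $\sum_j 2^{j+1}2^{-j}$ diverges logarithmically, giving $\Omega_\vphi^k(f',t)_{u\vphi} \sim t \log(1/t)$ rather than $O(t^0)$ in the relevant sense — in fact $f'\notin C_{u\vphi}$ or $f'\notin Z_0(u\vphi)=C_{u\vphi}$, breaking even the qualitative statement, and certainly the inequality. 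The main obstacle I anticipate is not the telescoping (which is routine) but getting the weighted Bernstein inequality with the correct $\vphi$-weight and, in the reverse direction, the uniform bound on $u(x)\int_0^x(u\vphi)^{-1}(t)\,dt$ cleanly for all admissible $\ga,\de$ including the boundary cases where one of them is $0$; I would look this up in \cite{DT} or \cite{mastromilo} rather than reprove it.
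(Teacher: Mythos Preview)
Your forward direction and the norm bound \eqref{normequiv} are essentially the paper's argument: the paper also telescopes $f'-P_m'=\sum_{i\ge 0}(P_{2^{i+1}m}-P_{2^im})'$, applies the weighted Bernstein inequality $\|(P_{2^{i+1}m}-P_{2^im})'\vphi u\|_\infty\le \C\,2^{i+1}m\,\|(P_{2^{i+1}m}-P_{2^im})u\|_\infty$ together with $\|(P_{2^{i+1}m}-P_{2^im})u\|_\infty\le \C E_{2^im}(f)_u$, and sums the geometric series (convergent exactly because $r>1$). The only extra detail in the paper is that it separately bounds the base term $\|f'u\vphi\|_\infty$ via $E_1(f')_{u\vphi}+\|Q_1 u\vphi\|_\infty$ with $\|Q_1 u\vphi\|_\infty\le\C\|fu\|_\infty$, whereas you fold this implicitly into the telescoping; both are fine.

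For the reverse implication the paper gives no argument at all (it cites \cite[p.~337--338]{Timan}), so your attempt is more than the paper offers --- but your sketch has a genuine gap. The integration step yields
\[
u(x)\,|f(x)-\widetilde P_{m+1}(x)|\le E_m(f')_{u\vphi}\cdot u(x)\!\int_0^x (u\vphi)^{-1}(t)\,dt\le \C\,E_m(f')_{u\vphi}\le \C\,m^{-(r-1)},
\]
which is one order short of the needed $E_m(f)_u\le\C m^{-r}$. The missing factor $1/m$ does \emph{not} come from the uniform bound on $u(x)\int_0^x(u\vphi)^{-1}$, and ``repeating the telescoping'' on the integrated polynomials does not manufacture it either: integrating $\sum_j(R_{2^{j+1}}-R_{2^j})$ just reproduces the same estimate. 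What actually produces the extra $1/m$ is the Ditzian--Totik relation $\Omega_\vphi^{k+1}(f,t)_u\le \C\,t\,\Omega_\vphi^{k}(f',t)_{u\vphi}$ (see \cite{DT}), which together with the weak-Jackson inequality gives $E_m(f)_u\le\C\,\Omega_\vphi^{k+1}(f,1/m)_u\le \C\,m^{-1}\Omega_\vphi^{k}(f',1/m)_{u\vphi}\le\C m^{-r}$. This is presumably what the Timan citation encodes.

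For the failure at $r=1$ the paper gives a much cleaner witness than your lacunary construction: $f(x)=x\log|x|$ belongs to $Z_1(u)$ but not to $W_1(u)$ (its derivative $1+\log|x|$ is not in $C_{u\vphi}$), so already $\|f'u\vphi\|_\infty=\infty$ and \eqref{normequiv} fails outright.
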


\begin{proof}
(\ref{ff'equiv}) follows  by arguments similar to those  used in \cite[p. 337-338]{Timan}.

Start from
\begin{eqnarray*}
\|f'\|_{Z_{r-1}(u\vphi)}&=&\|f' u\vphi\|_\infty + \sup_{t> 0}\frac{\Omega_{\vphi}^{k}(f',t)_{u\vphi}}{t^{r-1}}.
\end{eqnarray*}
First we prove
\begin{equation}\label{second}\sup_{t> 0}\frac{\Omega_{\vphi}^k(f',t)_{u\vphi}}{t^{r-1}}\leq \C \sup_{t> 0}\frac{\Omega_{\vphi}^{k+1}(f,t)_{u}}{t^{r}}, \quad k>r-1. \end{equation}

Since
$f\in C_u$, by \eqref{weie} there exists a sequence $\{P_m\}_m$ of  best approximation polynomials s.t. the series
$P_m+\sum_{i=0}^\infty(P_{2^{i+1}m}-P_{2^i m})$ converges uniformly in $[-1,1]$ to $f$ in $C_u$.
If we prove that the series
\begin{equation}
\label{serie}\sum_{i=0}^\infty (P_{2^{i+1}m}(x)-P_{2^i m}(x))'(u\vphi)(x)
\end{equation}
uniformly converges $\forall x\in [-1,1]$, then the equality
\[\left(\sum_{i=0}^\infty (P_{2^{i+1}m}-P_{2^i m})\right)'u\vphi=\sum_{i=0}^\infty (P_{2^{i+1}m}-P_{2^i m})'u\vphi\]
holds true and the series $P_m'+\sum_{i=0}^\infty(P_{2^{i+1}m}-P_{2^i m})'$ converges uniformly in $[-1,1]$ to $f'$ in $C_{u\vphi}$.

By the Bernstein  inequality \cite[Th. 8.4.7]{DT}
and the weak-Jackson inequality \cite[Th. 8.2.1]{DT}, we have
\begin{eqnarray*}
&&\|(P_{2^{i+1}m}-P_{2^i m})'u\vphi\|_\infty \leq  \C (2^{i+1}m) \|(P_{2^{i+1}m}-P_{2^i m})u\|_\infty\\ &\leq &\C (2^{i+1}m)E_{2^i m}(f)_u \leq  \C (2^{i+1}m) \int_0^{\frac 1{2^i m}} \frac{\Omega_{\vphi}^{k+1}(f,t)_{u}}{t}dt \\
&\leq &\C \frac 1{(2^{i}m)^{r-1}} \sup_{t> 0}\frac{\Omega_{\vphi}^{k+1}(f,t)_{u}}{t^{r}}, \ \ \C\neq\C(m).
\end{eqnarray*}
Thus, by the assumption on $f$, we have
\begin{eqnarray*}\sum_{i=0}^\infty\|(P_{2^{i+1}m}-P_{2^i m})'u \vphi \|_\infty 
&\leq & \frac{\C}{m^{r-1}} \sup_{t> 0}\frac{\Omega_{\vphi}^{k+1}(f,t)_{u}}{t^{r}}
\end{eqnarray*}
and the series \eqref{serie} uniformly converges $\forall x\in [-1,1]$.
Then
\begin{eqnarray} E_m(f')_{u \vphi} & \leq & \|(f-P_m)' u\vphi\|_\infty \leq  \sum_{i=0}^\infty\|(P_{2^{i+1}m}-P_{2^i m})' u\vphi\|_\infty \nonumber \\ & \leq &  \frac{\C}{m^{r-1}} \sup_{t> 0}\frac{\Omega_{\vphi}^{k+1}(f,t)_{u}}{t^{r}},\label{le3.1.1}
\end{eqnarray}
where $\C\neq\C(m)$. Since, using \eqref{norm2}
\[
\sup_{t> 0}\frac{\Omega_{\vphi}^k(f',t)_{u\vphi}}{t^{r-1}}\leq \C \sup_{m\geq 1}m^{r-1} E_m(f')_{u\vphi},
\]
taking into account (\ref{le3.1.1}), (\ref{second}) follows.

Let $Q_{1}$ be the $1$-degree polynomial of best approximation of $f'\in C_{u\vphi}$. We have
\begin{eqnarray*}
\|f' u\vphi\|_\infty&\leq & E_{1}(f')_{u\vphi}+ \|Q_1 u\vphi\|_\infty.
\end{eqnarray*}
By \eqref{le3.1.1} and $\|Q_1 u\vphi\|_\infty\le \C\|fu\|_{\infty}$,
\begin{eqnarray*}
\|f' u\vphi\|_\infty&\leq & \C \left[\sup_{t>0} \frac{\Omega_{\vphi}^{k+1}(f,t)_{u}}{t^r}+ \|fu\|_\infty\right]\le \C\|f\|_{Z_{r}(u)},\ k>r-1.
\end{eqnarray*}
 (\ref{normequiv}) follows by combining the last estimate  with \eqref{second}.

Finally, for $r=1$ \eqref{normequiv} becomes
\[\|f'u\vphi\|_\infty \leq \|f\|_{Z_1(u)} \]
and the above inequality is not true, being $W_1(u)\subset Z_1(u)$. For example the function $f(x)=x \log |x|, |x|\leq 1,$ belongs to $Z_1(u)$ but does not belong to $W_1(u)$ (see \cite[p. 54]{russoPhD}.
\end{proof}

\begin{lemma}\label{Dcontinuo} Let $\ga,\de\geq 0$ and $r> 1$. The operator $D: Z_{r,0}(\vgd) \to  Z_{r-1}(\vgd\vphi)$  is continuous and invertible. Moreover its inverse is bounded.
\end{lemma}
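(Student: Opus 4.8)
The plan is to prove the three assertions — continuity, injectivity/surjectivity, and boundedness of the inverse — by reducing everything to the norm equivalence already established in Lemma \ref{exlemma1} together with the integral constraint defining $Z_{r,0}(\vgd)$.

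\textbf{Continuity.} This is immediate: the restriction of $D$ to $Z_{r,0}(\vgd)$ is just $D$ acting on a subspace of $Z_r(\vgd)$, and the subspace carries the inherited norm $\|\cdot\|_{Z_r(\vgd)}$. So \eqref{normequiv} of Lemma \ref{exlemma1} gives directly $\|Df\|_{Z_{r-1}(\vgd\vphi)}=\|f'\|_{Z_{r-1}(\vgd\vphi)}\leq \C\|f\|_{Z_r(\vgd)}=\C\|f\|_{Z_{r,0}(\vgd)}$ for all $f\in Z_{r,0}(\vgd)$, which is continuity with the same constant.

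\textbf{Injectivity and surjectivity.} First I would check that $D$ is one-to-one on $Z_{r,0}(\vgd)$: if $f'\equiv 0$ then $f$ is constant, say $f\equiv c$; the constraint $\int_{-1}^1 f(x)\iv(x)\,dx=0$ forces $c\int_{-1}^1\iv(x)\,dx=0$, and since $\iv=v^{-\al,\al-1}$ is positive with a finite (for $0<\al<1$) integral, we get $c=0$. For surjectivity, given $h\in Z_{r-1}(\vgd\vphi)$ define the antiderivative $F(y)=\int_{-1}^y h(x)\,dx$; by the equivalence \eqref{ff'equiv} of Lemma \ref{exlemma1}, $F\in Z_r(\vgd)$ (this is the direction $f'\in Z_{r-1}(u\vphi)\Rightarrow f\in Z_r(u)$). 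Then adjust by a constant: set $f=F-c_0$ where $c_0=\left(\int_{-1}^1\iv\right)^{-1}\int_{-1}^1 F(x)\iv(x)\,dx$, so that $f\in Z_{r,0}(\vgd)$ — note subtracting a constant leaves $f'=h$ unchanged and keeps $f$ in $Z_r(\vgd)$ since constants belong to $Z_r(\vgd)$ for $r>1$. Hence $Df=h$ and $D$ is onto.

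\textbf{Boundedness of the inverse.} By the open mapping theorem, once $D$ is a continuous bijection between the Banach spaces $Z_{r,0}(\vgd)$ and $Z_{r-1}(\vgd\vphi)$, its inverse is automatically bounded; but I would prefer to give the explicit estimate to avoid invoking completeness subtleties and to control the constant. With $f=D^{-1}h$ as constructed above, $\|f\|_{Z_{r,0}(\vgd)}=\|f\|_{Z_r(\vgd)}\leq \|F\|_{Z_r(\vgd)}+|c_0|\,\|1\|_{Z_r(\vgd)}$; the first term is bounded by $\C\|h\|_{Z_{r-1}(\vgd\vphi)}$ using \eqref{ff'equiv}/the quantitative form of Lemma \ref{exlemma1}, and $|c_0|\leq \C\|F\iv\|_{L^1}\leq \C\|F u\|_\infty\leq\C\|F\|_{Z_r(\vgd)}$ again by that bound (here one uses that $u\iv=v^{\ga-\al,\de+\al-1}$ is integrable when $\ga,\de\geq 0$ and $0<\al<1$). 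The only genuine subtlety — and the step I would be most careful with — is the quantitative version of the implication $f'\in Z_{r-1}(u\vphi)\Rightarrow f\in Z_r(u)$ in Lemma \ref{exlemma1}: the statement there makes \eqref{normequiv} explicit only in the direction $f\mapsto f'$, so I would either extract the reverse norm bound from the proof technique of \cite[p. 337--338]{Timan} cited there (expressing $f-f(-1)$ as an integral of $f'$ and estimating $E_m(f)_u$ in terms of $E_m(f')_{u\vphi}$ via $\varphi$-moduli, then applying \eqref{norm2}), or observe that once injectivity, surjectivity and continuity are in hand, the closed graph / open mapping theorem supplies the bounded inverse without needing an explicit reverse inequality. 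I would present the open mapping argument as the clean route and remark that the constant can be tracked through Lemma \ref{exlemma1} if desired.
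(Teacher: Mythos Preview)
Your proof is correct and follows essentially the same route as the paper: continuity from Lemma \ref{exlemma1}, surjectivity by invoking \eqref{ff'equiv} and subtracting the appropriate constant to land in $Z_{r,0}(\vgd)$, injectivity from the integral constraint killing constants, and boundedness of the inverse via the open mapping theorem. One small slip in your optional explicit bound: the inequality $\|F\iv\|_{L^1}\le \C\|Fu\|_\infty$ needs $u^{-1}\iv=v^{-\ga-\al,-\de+\al-1}$ (not $u\iv$) to be integrable, and this is \emph{not} guaranteed by $\ga,\de\ge0$ alone---it does hold under \eqref{ipotesitheo3.2}, which is the setting where the lemma is actually used, but since you already flag the open mapping argument as the clean route this does not affect your main line.
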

\begin{proof} Since the continuity of $D$ is a consequence of Lemma \ref{exlemma1} it remains to prove only the invertibility. 
By \eqref{ff'equiv} for any $g\in Z_{r-1}(u \vphi)$ there exists $f\in Z_{r}(u)$ s.t.
$Df=g$, i.e. $D:Z_{r}(u) \to Z_{r-1}(u \vphi)$ is surjective. On the other hand, to any  $f\in Z_{r}(u)$ we can associate the function
$\bar f=f-\frac{\int_{-1}^1 f(x)\rho^{-1}(x)dx}{\int_{-1}^1 \rho^{-1}(x)dx}$
belonging to $Z_{r,0}(u)$, then $D:Z_{r,0}(u) \to Z_{r-1}(u \vphi)$ is surjective too.  Since the injectivity  can be easily proved, it follows that
$D:Z_{r,0}(u) \to Z_{r-1}(u \vphi)$ is  invertible for any  $r> 1$. Moreover, by the open mapping theorem (see, for example, \cite[p. 517]{Atk}), the inverse of $D$ is bounded.
\end{proof}

\vspace*{-1cm}

\begin{eqnarray*}
\textrm{Setting\ }
(A^{\rho^{-1}}f)(x)&=&(\cos\pi\al) \iv(x)f(x)+
\frac{\sin\pi\al}\pi\int_{-1}^1
f(y)\frac{\iv(y)}{y-x}dy,
\end{eqnarray*}
the following result can be found in \cite[Corollary 2.2]{D}.
\begin{lemma}
\label{A-teo} Let $0<\al<1$. Under the assumptions in \eqref{ipotesitheo3.2}
the linear maps $$A^{\rho^{-1}}:Z_{r,0}(u)\ \to \ Z_{r}(u\rho), \quad A^{\rho}:Z_{r}(u\rho)\to Z_{r,0}(u)$$ are both  continuous for $r>0$. Moreover,  $A^{\rho}$ is the inverse of $A^{\rho^{-1}}$ and the following equivalences hold true
\begin{eqnarray}
\|A^{\rho^{-1}}f\|_{Z_{r}(\vv)}&\sim & \|f\|_{Z_{r}(u)}, \quad
\|A^{\rho}f\|_{Z_{r}(u)}\sim  \|f\|_{Z_{r}(\vv)}, \label{normA}
\end{eqnarray}
 where the constants in ``$\sim$" are independent of $f.$
\end{lemma}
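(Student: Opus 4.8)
The plan is to derive the lemma from the classical inversion theory of the dominant (``airfoil'') singular integral operator together with the characterization \eqref{norm2} of the Zygmund seminorm by the rate of best weighted polynomial approximation. Since, by \eqref{norm2}, for $f\in C_u$ (resp. $g\in C_{\vv}$) the norm $\|f\|_{Z_r(u)}$ (resp. $\|g\|_{Z_r(\vv)}$) is equivalent to $\|fu\|_\infty+\sup_{i\ge0}(1+i)^rE_i(f)_u$ (resp. with $\vv$ in place of $u$), I would reduce the whole statement to two facts, with constants independent of $m,f,g$: (a) the weighted uniform bounds
\[\|(A^{\iv}f)\,\vv\|_\infty\le\C\,\|fu\|_\infty,\qquad \|(A^{\rho}g)\,u\|_\infty\le\C\,\|g\,\vv\|_\infty;\]
(b) the approximation estimates $E_m(A^{\iv}f)_{\vv}\le\C\,E_m(f)_u$ and $E_m(A^{\rho}g)_u\le\C\,E_m(g)_{\vv}$, for all $m$. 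Granting these, if $f\in Z_r(u)$ then $\sup_m(1+m)^rE_m(A^{\iv}f)_{\vv}\le\C\sup_m(1+m)^rE_m(f)_u<\infty$, so $A^{\iv}f\in Z_r(\vv)$ with $\|A^{\iv}f\|_{Z_r(\vv)}\le\C\|f\|_{Z_r(u)}$, and symmetrically for $A^{\rho}$; since $r>0$ is arbitrary this yields the continuity statements.

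For (b) I would exploit the fact, classical for dominant singular operators, that $A^{\rho}$ and $A^{\iv}$ carry polynomials (with the relevant Jacobi weight removed) into polynomials of comparable degree — the quantitative form of the relations underlying \eqref{Dap}; see \cite[Ch.~9]{PSbook}, \cite[(4.21.7)]{Szego}. Thus, if $P_m$ denotes a polynomial of best approximation of $f$ in $C_u$, the function $A^{\iv}P_m$ lies within a controlled $C_{\vv}$-distance of the polynomials of degree not exceeding a fixed multiple of $m$, while $A^{\iv}(f-P_m)$ is estimated by the uniform bound in (a); adding the two gives $E_{cm}(A^{\iv}f)_{\vv}\le\C\,E_m(f)_u$ for a fixed $c$, which, by the freedom to reindex in \eqref{norm2}, is exactly (b). The reverse estimate repeats the argument with $A^{\rho}$ in place of $A^{\iv}$.

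For (a) I would appeal to the weighted boundedness of the finite Hilbert transform on spaces of continuous functions with Jacobi weights. Writing out $A^{\iv}$ and $A^{\rho}$, the multiplicative parts $(\cos\pi\al)\iv f$ and $a\,\v g$ are trivially bounded between the relevant weighted spaces, so the issue is the boundedness of $f\mapsto\int_{-1}^1 f(y)\iv(y)(y-x)^{-1}dy$ from $C_u$ into $C_{\vv}$ and of its transpose. This is exactly where the sharp exponent window \eqref{ipotesitheo3.2} is used: it places the pairs of Jacobi weights in question precisely inside the admissible range for boundedness of the Cauchy singular operator on weighted continuous-function spaces. I expect this to be the main obstacle — verifying that \eqref{ipotesitheo3.2} is the correct boundedness range for the weighted finite Hilbert transform at hand, with the extra factor $u=v^{\ga,\de}$; this is the technical core, and it is the content of \cite[Corollary~2.2]{D} and of the mapping theorems established there.

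Finally, that $A^{\rho}$ is a two-sided inverse of $A^{\iv}$ is the classical inversion formula for the dominant equation: I would verify $A^{\rho}A^{\iv}=I$ and $A^{\iv}A^{\rho}=I$ on polynomials using the spectral relations above (the two multiplicative constants being reciprocal, a consequence of $a^2+b^2=1$ with $a=\cos\pi\al$, $b=-\sin\pi\al$), the Poincar\'e--Bertrand commutation formula handling the composition of the two singular integrals, and then extend to $Z_{r,0}(u)$ and $Z_r(\vv)$ by density using the bounds of (a). The orthogonality constraint $\int_{-1}^1 f\iv=0$ defining $Z_{r,0}(u)$ is precisely the cokernel condition of the index $-1$ operator $A^{\rho}$ that makes $A^{\rho}A^{\iv}$ the identity on all of $Z_{r,0}(u)$. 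The equivalences \eqref{normA} then follow at once: boundedness of $A^{\iv}$ gives one inequality, applying the boundedness of $A^{\rho}$ to $A^{\iv}f$ gives the reverse one, and symmetrically for the second equivalence.
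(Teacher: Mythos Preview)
The paper does not prove this lemma at all: immediately before the statement it simply says ``the following result can be found in \cite[Corollary 2.2]{D}'' and records the conclusion. So there is no argument in the paper to compare against.

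That said, your proposed route has a genuine gap. Step~(a) --- the claimed uniform bound $\|(A^{\iv}f)\,\vv\|_\infty\le\C\,\|fu\|_\infty$ for $f\in C_u$ --- is not available: the finite Hilbert transform does not act boundedly between weighted $L^\infty$ spaces, no matter how the Jacobi exponents are tuned. The paper itself makes this visible elsewhere: in the proof of Theorem~\ref{condiz} the authors quote \cite[Th.~3.1]{russoLuther} in the form
\[
\|A^{\rho}P\|_{C_u}\le\C\,\|P\|_{C_{\vv}}+\C\int_0^1\frac{\Omega_\vphi^k(P,t)_{\vv}}{t}\,dt,
\]
and the integral term is essential (on $\PP_{m-1}$ it is what produces the extra $\log m$ in \eqref{DAP-est}). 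Your step~(b) then fails too, since you planned to control $A^{\iv}(f-P_m)$ via~(a). The exponent window \eqref{ipotesitheo3.2} does not rescue this: it is the range for boundedness in the Zygmund scale $Z_r$ with $r>0$, not in $C_u$. A correct argument has to use the positive smoothness $r>0$ from the start --- it is precisely what makes $\int_0^1\Omega_\vphi^k(f,t)_{\vv}\,t^{-1}\,dt$ finite and allows the sup-norm of $A^{\rho}f$ to be controlled --- rather than factoring through the weighted $C$-spaces. This is presumably how \cite{D} proceeds; your remarks on the inversion $A^{\rho}A^{\iv}=I$, the role of the index~$-1$ constraint defining $Z_{r,0}(u)$, and the derivation of \eqref{normA} from mutual boundedness are fine once the boundedness itself is in hand.
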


As a consequence of Lemmas \ref{Dcontinuo} and \ref{A-teo} we deduce the following result.
\begin{corol}
\label{corolInv} Let $0<\al<1$. Under the assumptions in \eqref{ipotesitheo3.2}
the operator $DA^{\rho}: Z_{r}(\vv) \to Z_{r-1}(u \vphi)$ is continuous and invertible for each $r> 1$. Moreover its inverse is bounded.
\end{corol}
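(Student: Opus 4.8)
The plan is to obtain the statement by composing the two structural results that immediately precede it, namely Lemma \ref{Dcontinuo} (the operator $D:Z_{r,0}(u)\to Z_{r-1}(u\vphi)$ is continuous, invertible with bounded inverse for $r>1$) and Lemma \ref{A-teo} (the operators $A^{\rho}:Z_r(\vv)\to Z_{r,0}(u)$ and $A^{\rho^{-1}}:Z_{r,0}(u)\to Z_r(\vv)$ are mutually inverse continuous maps for $r>0$, with the norm equivalences \eqref{normA}). The key point to pin down first is that the composition $D\circ A^{\rho}$ is legitimately defined as a map from $Z_r(\vv)$ into $Z_{r-1}(u\vphi)$: given $f\in Z_r(\vv)$, Lemma \ref{A-teo} places $A^{\rho}f$ in the subspace $Z_{r,0}(u)$ (this is exactly why the zero-moment subspace was introduced), and then Lemma \ref{Dcontinuo} applies on $Z_{r,0}(u)$ to send it into $Z_{r-1}(u\vphi)$. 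So the domains and codomains match up precisely, with no loss.

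Once the composition is seen to be well posed, continuity is immediate: $DA^{\rho}$ is a composition of two bounded linear operators, so $\|DA^{\rho}f\|_{Z_{r-1}(u\vphi)}\le \|D\|_{Z_{r,0}(u)\to Z_{r-1}(u\vphi)}\,\|A^{\rho}f\|_{Z_{r,0}(u)}\le \C\|f\|_{Z_r(\vv)}$, where the last step uses the norm equivalence $\|A^{\rho}f\|_{Z_r(u)}\sim\|f\|_{Z_r(\vv)}$ from \eqref{normA} (and $\|\cdot\|_{Z_{r,0}(u)}=\|\cdot\|_{Z_r(u)}$ by definition). For invertibility, I would exhibit the inverse explicitly as $A^{\rho^{-1}}\circ D^{-1}$. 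Here $D^{-1}:Z_{r-1}(u\vphi)\to Z_{r,0}(u)$ is the bounded inverse furnished by Lemma \ref{Dcontinuo}, and $A^{\rho^{-1}}:Z_{r,0}(u)\to Z_r(\vv)$ is bounded by Lemma \ref{A-teo}; since $A^{\rho}$ and $A^{\rho^{-1}}$ are mutually inverse, one checks $(DA^{\rho})(A^{\rho^{-1}}D^{-1})=D(A^{\rho}A^{\rho^{-1}})D^{-1}=DD^{-1}=\mathrm{id}$ on $Z_{r-1}(u\vphi)$ and symmetrically $(A^{\rho^{-1}}D^{-1})(DA^{\rho})=\mathrm{id}$ on $Z_r(\vv)$. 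Boundedness of this inverse follows from boundedness of each factor. One should take care that the composition $A^{\rho^{-1}}\circ D^{-1}$ is also well posed: $D^{-1}$ lands in $Z_{r,0}(u)$, which is exactly the domain of $A^{\rho^{-1}}$ in Lemma \ref{A-teo}, so again there is no mismatch.

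There is no serious obstacle here — this corollary is essentially a bookkeeping consequence of the two lemmas. The only thing that requires genuine attention, and which I would state carefully, is the role of the zero-moment constraint: the naive claim ``$D$ is invertible'' is false on all of $Z_r(u)$ because $D$ kills constants, and ``$A^{\rho}$ maps into $Z_r(u)$'' would not be enough to feed $D^{-1}$; it is the refinement $\mathrm{Range}(A^{\rho})\subseteq Z_{r,0}(u)$ together with $D^{-1}$ being defined precisely on $Z_{r,0}(u)$ that makes the two pieces fit. I would also note in passing that the restriction $r>1$ is inherited from Lemma \ref{Dcontinuo} (and is sharp there, by the counterexample $f(x)=x\log|x|$ already recorded), whereas the $A^{\rho}$-part would work for all $r>0$; thus the corollary is stated for $r>1$, which is exactly the range used in Theorems \ref{teo-sigma0} and \ref{stability-convergence}.
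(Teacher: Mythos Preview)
Your proposal is correct and follows precisely the approach the paper intends: the corollary is stated as a direct consequence of Lemmas \ref{Dcontinuo} and \ref{A-teo}, and you have spelled out the composition $DA^{\rho}=D\circ A^{\rho}$ through the intermediate space $Z_{r,0}(u)$ exactly as required. Your additional remarks on the role of the zero-moment subspace and the sharpness of $r>1$ are accurate and clarify what the paper leaves implicit.
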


The following lemma will be useful in the sequel.
\begin{lemma}\label{maky_lem}
Let us assume that the kernel $k(x,y)$ satisfies \eqref{ky-cond}. Then there exists a sequence $\{P_m\}_m$ of polynomials $P_m(x,y)=\sum_{i=0}^m p_{i,m}(x)y^{i},$ of degree not greater than $m$ in $y$, such that $p_{i,m}(x)$ is piecewise constant for all
$i =0,\ldots,m$ and
\begin{equation}
\label{maky}\sup_{x,y\in [-1,1]} (u\vphi)(y)|P_m(x,y)-k(x,y)|\leq \C \sup_{|x|\leq 1}E_m(k_x)_{\vgdf},
\end{equation}
where $\C\neq \C(m)$.
\end{lemma}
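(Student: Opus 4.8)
The plan is to construct the polynomial approximant $P_m(x,y)$ explicitly by first taking, for each fixed $x$, a near-best polynomial approximation of $k_x$ in the variable $y$, and then discretizing the dependence on $x$. First I would fix $x \in [-1,1]$ and let $Q_m(x,\cdot) \in \PP_m$ be (a polynomial realizing) the error of best approximation $E_m(k_x)_{\vgdf}$, so that $\|u\vphi\,(k_x - Q_m(x,\cdot))\|_\infty \leq \C E_m(k_x)_{\vgdf}$ uniformly in $x$ by \eqref{ky-cond}. Writing $Q_m(x,y)=\sum_{i=0}^m q_{i,m}(x)y^i$, the coefficient functions $q_{i,m}(x)$ are bounded on $[-1,1]$ (this needs a short argument: the map sending a polynomial to its coefficient vector is bounded on the finite-dimensional space $\PP_m$, with a constant depending only on $m$, and one controls $\|Q_m(x,\cdot)\|_\infty$ by $\|u\vphi\, k_x\|_\infty + \C E_m(k_x)_{\vgdf}$ via the boundedness of $1/(u\vphi)$ away from nothing — more precisely one works on a fixed compact subinterval and uses that the $y^i$ are linearly independent there).

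Next I would replace each $q_{i,m}(x)$ by a piecewise constant function $p_{i,m}(x)$. The natural device is to partition $[-1,1]$ into finitely many subintervals and set $p_{i,m}$ equal to the value of $q_{i,m}$ at a sample point in each subinterval; the number and location of subintervals can be chosen so that the resulting perturbation is dominated by $\sup_{|x|\le1}E_m(k_x)_{\vgdf}$. To make this quantitative I would use uniform continuity: since $k$ satisfies \eqref{ky-cond}, the family $\{k_x\}$ is equicontinuous in an appropriate sense, hence $x \mapsto q_{i,m}(x)$ is continuous (indeed the best-approximation coefficients depend continuously on $k_x$, which varies continuously with $x$), so by choosing the mesh fine enough — fineness allowed to depend on $m$ — one gets $\sup_x |q_{i,m}(x) - p_{i,m}(x)| \le \C\, m^{-1} \sup_{|x|\le1} E_m(k_x)_{\vgdf}$, say, which after multiplying by $|y^i|\le1$ and summing over $i=0,\dots,m$ still gives a bound of the required order. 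Setting $P_m(x,y)=\sum_{i=0}^m p_{i,m}(x)y^i$ and combining the two perturbation estimates via the triangle inequality yields \eqref{maky}.

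The main obstacle I anticipate is controlling the passage from $Q_m$ to the piecewise-constant-coefficient polynomial $P_m$ uniformly, because a crude bound on $\|q_{i,m} - p_{i,m}\|_\infty$ picks up a factor that grows with $m$ (both from the number of coefficients and from the norm of the coefficient-extraction map on $\PP_m$). The resolution is that the mesh used to define $p_{i,m}$ is permitted to depend on $m$, so for each $m$ one simply refines it until the total error contributed by the discretization step is at most, say, $\sup_{|x|\le1}E_m(k_x)_{\vgdf}$ itself; since \eqref{maky} only asserts a bound with an unspecified constant $\C \neq \C(m)$ and does not require the mesh to be $m$-independent, this is legitimate. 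A secondary technical point is justifying that $x \mapsto q_{i,m}(x)$ is continuous (so that such a refinement exists): this follows because the best approximation operator $k_x \mapsto Q_m(x,\cdot)$, while not linear, is continuous from $C_{\vgdf}$ to $\PP_m$ on the relevant bounded set, and $x \mapsto k_x$ is continuous from $[-1,1]$ into $C_{\vgdf}$ by \eqref{ky-cond} (the finiteness of $\sup_x\|k_x\|_{Z_s(\vgdf)}$ forces equicontinuity). I would state these facts and refer to standard approximation-theory references rather than reproving them.
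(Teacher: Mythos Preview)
The paper gives no self-contained proof here; it simply cites Lemma~4.11 of \cite{JuLu} and says the argument carries over step by step. Your outline --- take a near-best $y$-approximant $Q_m(x,\cdot)$ for each fixed $x$ and then freeze the $x$-dependence to piecewise constants on a mesh allowed to depend on $m$ --- is the standard construction and is almost certainly what that cited proof does.

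There is, however, a real gap in your justification of the discretization step. You assert that $x\mapsto k_x$ is continuous into $C_{u\varphi}$ ``by \eqref{ky-cond}'', on the grounds that the uniform Zygmund bound forces equicontinuity. But \eqref{ky-cond} is a bound on the $y$-smoothness of each slice $k_x$, uniform in $x$; it gives equicontinuity of the family $\{k_x\}_x$ \emph{as functions of $y$}, not continuity of the assignment $x\mapsto k_x$. For instance $k(x,y)=f(y)\,\mathbf{1}_{\mathbb{Q}}(x)$ with $0\ne f\in Z_s(u\varphi)$ satisfies \eqref{ky-cond}, yet $x\mapsto k_x$ is nowhere continuous, and no refinement of the mesh makes your $p_{i,m}$ close to the resulting $q_{i,m}$. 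The continuity in $x$ that your argument uses is an \emph{additional} hypothesis, implicit in the paper's description of $k$ as a ``smooth'' kernel; you should state it rather than claim it follows from \eqref{ky-cond}. A related minor point: best approximation in the non-strictly-convex space $C_{u\varphi}$ need not be unique, so ``the best approximation operator $k_x\mapsto Q_m(x,\cdot)$ is continuous'' is not well-posed as written. Replacing $Q_m(x,\cdot)$ by a bounded \emph{linear} near-best projector (a de la Vall\'ee Poussin--type mean) sidesteps this cleanly and makes the coefficient functions $q_{i,m}$ inherit whatever continuity in $x$ the map $x\mapsto k_x$ has.
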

\begin{proof}
The proof can be easily deduced following step by step the proof of Lemma 4.11 in \cite{JuLu}.
\end{proof}

\begin{lemma}\label{K-compact-1}Let $0<\al<1$ and let  $\ga,\de<1$. If for some $s> 0$ the kernel $k$ satisfies \eqref{ky-cond},
then $K : C_{u\rho} \to  Z_{r-1}(u \vphi) $ is continuous for all  $1\leq r\leq s+1$ and compact for all $1\leq r<s+1$.
\end{lemma}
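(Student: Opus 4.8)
The operator $K$ factors through the integral $(Kf)(y)=\frac1\pi\int_{-1}^1 k(x,y)(f\v)(x)\,dx$, and the key structural fact is that for each fixed $y$, this is a bounded linear functional on $C_{u\rho}$ whose "smoothness in $y$" is controlled by the smoothness of $k$ in its second variable. The plan is therefore to (i) show $K$ maps $C_{u\rho}$ boundedly into $Z_{r-1}(u\vphi)$ by estimating both $\|(Kf)u\vphi\|_\infty$ and the modulus $\Omega_\vphi^{k}((Kf),t)_{u\vphi}$, using hypothesis \eqref{ky-cond} and Lemma \ref{Nevai} (which provides $\int_{-1}^1|L_m^\rho(g,x)|u^{-1}(x)\,dx\le\C\|g\|_\infty$, and in particular, by density/limiting or directly, $\int_{-1}^1 u^{-1}(x)|\v(x)|\,dx<\infty$ since $\ga,\de<1$ guarantees integrability of $u^{-1}\v$), and then (ii) obtain compactness for $r<s+1$ by an approximation argument: replace $k$ by the piecewise-constant-coefficient polynomials $P_m$ furnished by Lemma \ref{maky_lem}, note that each $K_{P_m}$ has finite-dimensional range (hence is compact), and show $\|K-K_{P_m}\|_{C_{u\rho}\to Z_{r-1}(u\vphi)}\to 0$.

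For step (i): writing $|(Kf)(y)|(u\vphi)(y)\le \frac1\pi\|f u\rho\|_\infty\,(u\vphi)(y)\int_{-1}^1|k(x,y)|\,u^{-1}(x)|\v(x)|\,dx$ and bounding $(u\vphi)(y)|k(x,y)|\le \|k_x\|_{Z_0(u\vphi)}\le\|k_x\|_{Z_s(u\vphi)}$ uniformly, together with $\int_{-1}^1 u^{-1}|\v|\,dx<\infty$, gives the uniform-norm bound. For the modulus term, I apply the finite difference $\Delta^k_{\tau\vphi}$ in the $y$-variable under the integral sign; since differences commute with the integral, $(u\vphi)(y)|\Delta^k_{\tau\vphi(y)}(Kf)(y)|\le\frac1\pi\|f u\rho\|_\infty\int_{-1}^1|\Delta^k_{\tau\vphi(y)}[(u\vphi)(y)k(x,y)]|\,u^{-1}(x)|\v(x)|\,dx$ (a small bookkeeping point: one pulls the weight $(u\vphi)(y)$ inside so that what one differences is exactly the $Z_{s}(u\vphi)$-regular object $y\mapsto(u\vphi)(y)k(x,y)$), and then $|\Delta^k_{\tau\vphi(y)}[(u\vphi)k_x](y)|\le\C\,\tau^{\min(k,s)}\|k_x\|_{Z_s(u\vphi)}$ for $y\in I_{k\tau}$, uniformly in $x$. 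Choosing $k\ge s$ in the definition of $\Omega^k_\vphi$ (which is legitimate by the norm equivalence \eqref{norm2} making $Z_{r-1}$ independent of $k$), and recalling $r-1<s$, we get $\Omega_\vphi^{k}((Kf),t)_{u\vphi}\le\C\,t^{s}\|f\|_{C_{u\rho}}\le\C\,t^{r-1}\|f\|_{C_{u\rho}}$ for $t$ bounded, which yields $\|Kf\|_{Z_{r-1}(u\vphi)}\le\C\|f\|_{C_{u\rho}}$ for all $1\le r\le s+1$ (the endpoint $r=s+1$ works because we only used $s$, not strict inequality).

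For step (ii): by Lemma \ref{maky_lem} there are polynomials $P_m(x,y)=\sum_{i=0}^m p_{i,m}(x)y^i$ with $p_{i,m}$ piecewise constant and $\sup_{x,y}(u\vphi)(y)|P_m(x,y)-k(x,y)|\le\C\sup_{|x|\le1}E_m(k_x)_{u\vphi}$. The difference $K-K_{P_m}$ is again an integral operator with kernel $k-P_m$, whose second variable, after multiplication by $(u\vphi)(y)$, is a polynomial of degree $\le m$ perturbed, so that its $Z_{r-1}(u\vphi)$-norm is controlled — via the Favard-type estimate \eqref{Em-est-Zr} relating $E_m(\cdot)_{Z_{r-1}}$ to $\sup_k k^{r-1}E_k(\cdot)_{u\vphi}$, applied to $k_x-$its best approximant, combined with \eqref{ky-cond} giving $E_m(k_x)_{u\vphi}\le\C m^{-s}\|k_x\|_{Z_s(u\vphi)}$ — by $\C\,m^{-(s-(r-1))}\sup_{|x|\le1}\|k_x\|_{Z_s(u\vphi)}$. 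Since $r<s+1$ this tends to $0$, so $\|K-K_{P_m}\|_{C_{u\rho}\to Z_{r-1}(u\vphi)}\to0$. Each $K_{P_m}f(y)=\frac1\pi\sum_{i=0}^m\bigl(\int_{-1}^1 p_{i,m}(x)(f\v)(x)\,dx\bigr)y^i$ has range in $\PP_m$, hence is a bounded finite-rank operator, hence compact; as a uniform operator-norm limit of compact operators, $K$ is compact for every $1\le r<s+1$. The main obstacle is the careful handling of the difference operator under the integral in step (i) — making sure the weight $(u\vphi)(y)$ is distributed so that what is being differenced is genuinely in $Z_s(u\vphi)$ uniformly in $x$, and that the difference increments $\tau\vphi(y)$ stay in the domain where the $Z_s$ estimate applies (i.e. restricting to $y\in I_{k\tau}$ as in \eqref{mainMod}); everything else is routine once this is set up.
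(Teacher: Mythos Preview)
Your strategy differs from the paper's in both halves. For continuity, the paper never touches the modulus directly: it uses Lemma~\ref{maky_lem} to bound $E_m(Kf)_{u\vphi}\le\C m^{-s}\|f\|_{C_{u\rho}}$ and then invokes the equivalence~\eqref{norm2} to get $Kf\in Z_s(u\vphi)$. For compactness, the paper does not approximate $K$ by finite-rank operators; having just shown $K:C_{u\rho}\to Z_s(u\vphi)$ is bounded, it simply observes (via~\eqref{immZ}) that $Z_s(u\vphi)$ embeds compactly into $Z_{r-1}(u\vphi)$ for $r-1<s$ and composes. This is considerably shorter than your route, though yours is also valid in principle and has the merit of being more self-contained.

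Your step~(i), however, contains a genuine error in the ``bookkeeping''. You cannot pull the weight $(u\vphi)(y)$ \emph{inside} the difference: $(u\vphi)(y)\,\Delta^k_{\tau\vphi}g(y)\ne\Delta^k_{\tau\vphi}[(u\vphi)g](y)$, and more to the point, the hypothesis $k_x\in Z_s(u\vphi)$ says nothing about the regularity of $y\mapsto(u\vphi)(y)k(x,y)$. What it \emph{does} say, by the very definition~\eqref{mainMod}, is that $(u\vphi)(y)\,|\Delta^k_{\tau\vphi}k_x(y)|\le\C\tau^s\|k_x\|_{Z_s(u\vphi)}$ for $y\in I_{k\tau}$ --- weight outside, difference applied to $k_x$ only. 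Fortunately this is exactly what you need: since the $y$-difference commutes with the $x$-integral,
\[
(u\vphi)(y)\,|\Delta^k_{\tau\vphi}(Kf)(y)|\le\tfrac1\pi\|fu\rho\|_\infty\int_{-1}^1(u\vphi)(y)\,|\Delta^k_{\tau\vphi}k_x(y)|\,u^{-1}(x)\,dx\le\C\tau^s\|f\|_{C_{u\rho}},
\]
where $\int_{-1}^1 u^{-1}<\infty$ because $\ga,\de<1$ (Lemma~\ref{Nevai} is irrelevant here, and the integrand is $u^{-1}$, not $u^{-1}\v$ as you wrote). Your step~(ii) sketch is also workable once tightened: the key is that for $j\ge m$ one has $E_j((K-K_{P_m})f)_{u\vphi}=E_j(Kf)_{u\vphi}\le\C j^{-s}$ since $K_{P_m}f\in\PP_m$, while for $j<m$ one bounds $E_j$ by the sup-norm $\le\C m^{-s}$ coming from Lemma~\ref{maky_lem}; in either case $j^{r-1}E_j\le\C m^{r-1-s}\to0$.
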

\begin{proof} Taking into account (\ref{ky-cond}) we have
\begin{eqnarray}\vgdfy|(Kf)(y)|
&\leq & \frac 1\pi\|f\vv\|_\infty \vgdfy\int_{-1}^1 |k(x,y)|u^{-1}(x)dx\nonumber\\
&\leq & \C \|f\|_{C_{\vv}}  \sup_{|x|\leq 1}\|k_x \vgdf\|_\infty \leq  \C \|f\|_{C_{\vv}}. \label{Kbound1}
 \end{eqnarray}
Let  $\{P_{m}\}_m$ be the sequence of polynomials defined in Lemma \ref{maky_lem}. Then
\begin{eqnarray*}
E_m(K f)_{\vgdf}&\leq & \frac 1\pi \sup_{|y|\leq 1}\vgdfy\int_{-1}^1|k(x,y)-P_m(x,y)||(f\v)(x)|dx \\
&\leq & \C \|f\vv\|_\infty \sup_{x, y\in [-1,1]}\vgdfy |k(x,y)-P_m(x,y)|\int_{-1}^1u^{-1}(x)dx \\
&\leq & \C \|f\vv\|_\infty \sup_{|x|\leq 1} E_m(k_x)_{\vgdf}.
\end{eqnarray*}
Since, under the  assumption \eqref{ky-cond}, $k_x\in Z_s(\vgdf),$ using \eqref{Favard}, we get
\begin{equation}\label{EmKf-est-1}E_m(K f)_{\vgdf}\leq \frac{\C}{m^s}\|f\|_{C_{\vv}}.\end{equation}
Combining \eqref{Kbound1} and \eqref{EmKf-est-1} with  \eqref{norm1} and \eqref{norm2},  the continuity of $K: C_{\vv}\to Z_{r-1}(\vgdf)$ with $1\leq r\leq s+1$ follows.

Now, since for all $f\in Z_{r-1}(u\vphi)$ we have \cite[p. 6]{MR}
\begin{equation}
\label{immZ}\|f-L_m^w(f)\|_{Z_{r-1}(u\vphi)}\leq \frac{\C}{m^{s-r+1}}\|f\|_{Z_s(u\vphi)}\log m, \quad 0\leq r-1<s,
\end{equation}
the imbedding operator $E:Z_s(u\vphi)\to Z_{r-1}(u\vphi)$ can be approximated by a sequence of finite dimensional operators and then $Z_s(u\vphi)$ is compactly imbedded in $Z_{r-1}(u\vphi)$ for $0\leq r-1< s$. Consequently, from the continuity of the operator $K :C_{\vv}\to Z_{s}(\vgdf)$ we deduce the compactness of the operator $K :C_{\vv}\to Z_{r-1}(\vgdf)$ for $1\leq r<s+1$.
\end{proof}

\begin{lemma}\label{H-compact-1}Let $0<\al<1$. If the kernel $h(x,y)$ satisfies \eqref{h-cond1} and \eqref{h-cond2},
then the operator $H : C_{\vv} \to  Z_{r-1}(\vgd \vphi), $ is continuous for all  $1\leq r\leq s+1$ and compact for all $1\leq r<s+1$.

In particular, when $\al=\frac 12$ and $h(x,y)=\log|x-y|$, for all $r\geq 1$ the operator $H$ is continuous as a map from $Z_{r}(\varphi)$ into $Z_{r}(\varphi)$ and compact as a map from $Z_{r}(\varphi)$ into $Z_{r-1}(\varphi)$.
\end{lemma}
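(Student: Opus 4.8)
The strategy mirrors the one used for Lemma \ref{K-compact-1}, replacing the smooth-kernel polynomial approximation argument by the weak-singularity hypotheses \eqref{h-cond1}--\eqref{h-cond2}. First I would prove the boundedness of $H$ from $C_{\vv}$ into $Z_{s}(\vgdf)$. For the uniform part of the norm, using \eqref{h-cond1},
\[
\vgdfy|(Hf)(y)|\leq \frac 1\pi\|f\vv\|_\infty\, \vgdfy\int_{-1}^1|h(x,y)|u^{-1}(x)dx\leq \C\|f\|_{C_{\vv}}.
\]
For the modulus-of-smoothness part, I would estimate $\Omega_{\vphi}^{1}(Hf,\tau)_{\vgdf}$ directly: for $y\in I_{\tau}$,
\[
(u\vphi)(y)\bigl|\Delta_{\tau\vphi(y)}(Hf)(y)\bigr|\leq \frac 1\pi\|f\vv\|_\infty\,(u\vphi)(y)\int_{-1}^1\bigl|\Delta_{\tau\vphi(y)}h(x,y)\bigr|u^{-1}(x)dx\leq \C A(\tau)\leq \C\tau^{s}\|f\|_{C_{\vv}},
\]
by \eqref{h-cond2}. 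Taking the supremum over $0<\tau\leq t$ and dividing by $t^{s}$ gives $\sup_{t>0}\Omega_{\vphi}^{1}(Hf,t)_{\vgdf}/t^{s}\leq \C\|f\|_{C_{\vv}}$, whence $\|Hf\|_{Z_{s}(\vgdf)}\leq \C\|f\|_{C_{\vv}}$, i.e. $H:C_{\vv}\to Z_{s}(\vgdf)$ is continuous. Since $Z_{s}(\vgdf)\hookrightarrow Z_{r-1}(\vgdf)$ continuously for $1\leq r\leq s+1$, the continuity claim follows, and the compactness for $1\leq r<s+1$ follows exactly as in Lemma \ref{K-compact-1}: by \eqref{immZ} the imbedding $Z_{s}(\vgdf)\to Z_{r-1}(\vgdf)$ is compact (being approximated in operator norm by the finite-rank maps $L_{m}^{w}$), and a compact operator post-composed with a bounded one is compact.

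For the special case $\al=\tfrac12$, $h(x,y)=\log|x-y|$, I would argue that the logarithmic kernel maps $Z_{r}(\varphi)$ into $Z_{r}(\varphi)$ for every $r\geq1$. The natural route is to use the known explicit action of the finite-part/logarithmic potential on the Chebyshev basis: the operator $f\mapsto \frac1\pi\int_{-1}^{1}\log|x-y|\,f(x)\varphi(x)\,dx$ has a diagonal (or near-diagonal) representation in the orthonormal system $\{p_{n}^{\varphi}\}_{n}$, with eigenvalue-type factors decaying like $1/(n+1)$; consequently it maps $Z_{r}(\varphi)$ boundedly into $Z_{r+1}(\varphi)\subset Z_{r}(\varphi)$, using the Fourier-coefficient characterization \eqref{norm2} of the Zygmund norm. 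Composing with the compact imbedding $Z_{r}(\varphi)\hookrightarrow Z_{r-1}(\varphi)$ (again via \eqref{immZ}) yields compactness as a map into $Z_{r-1}(\varphi)$. Alternatively, one can verify \eqref{h-cond1}--\eqref{h-cond2} hold for $\log|x-y|$ with arbitrarily large $s$ on the range where $\varphi$ is bounded away from the endpoints and patch near $\pm1$ using $\varphi(y)\to0$; Proposition \ref{remark1} already records $A(\tau)\leq \C\tau\log\tau^{-1}$ for this kernel, so the first part of the lemma gives continuity into $Z_{1}(\varphi)$, and the smoothing-by-one-order improvement is what promotes this to $Z_{r}(\varphi)$ for all $r$.

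\textbf{Main obstacle.} The routine half is the first statement: once \eqref{h-cond1}--\eqref{h-cond2} are in hand the estimates above are mechanical. The delicate point is the logarithmic-kernel claim, specifically establishing the one-order gain of smoothness ($Z_{r}\to Z_{r+1}$) rather than merely $Z_{r}\to Z_{r}$; this requires either the precise spectral description of the logarithmic potential on $[-1,1]$ in the Chebyshev-$\varphi$ basis (so that the Fourier coefficients $c_{n}$ of $Hf$ satisfy $|c_{n}|\leq \C|d_{n}|/(n+1)$ where $d_{n}$ are those of $f$, giving $\sup_{n}(1+n)^{r+1}|c_{n}|\leq \C\sup_{n}(1+n)^{r}|d_{n}|$), or a careful endpoint analysis of $\Delta_{\tau\vphi(y)}^{k}$ applied to $\log|x-y|$ uniformly in $y$ including near $y=\pm1$. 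I would prioritize the Fourier-coefficient approach since the orthonormal-polynomial machinery and the equivalence \eqref{norm2} are already available in the paper.
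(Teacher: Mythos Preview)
Your treatment of the first (general) part matches the paper's proof essentially line for line: bound $\|(Hf)\vgdf\|_\infty$ via \eqref{h-cond1}, bound $\Omega_\vphi(Hf,\tau)_{\vgdf}$ via \eqref{h-cond2}, conclude $H:C_{\vv}\to Z_s(\vgdf)$ is bounded, and then use the compact imbedding $Z_s(\vgdf)\hookrightarrow Z_{r-1}(\vgdf)$.

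For the special case $\al=\tfrac12$, $h(x,y)=\log|x-y|$, the paper takes a different and shorter route than your Fourier-coefficient plan: it observes the identity
\[
\frac{d}{dy}(Hf)(y)=-\,(A^{\vphi}f)(y),
\]
and then refers to \cite[Proof of Theorem 2.2]{MT2}; morally, the one-order gain of smoothness comes from combining the continuity of $A^{\vphi}:Z_r(\vphi)\to Z_r$ (Lemma~\ref{A-teo}) with the equivalence $f'\in Z_{r-1}(\vphi)\Leftrightarrow f\in Z_r$ of Lemma~\ref{exlemma1}. This uses only machinery already in the paper. Your spectral approach is in the right spirit, but there is a gap in the step where you invoke \eqref{norm2}: that equivalence is between the Zygmund seminorm and $\sup_i(1+i)^rE_i(f)_u$ (best-approximation errors in the \emph{weighted uniform} norm), not $\sup_n(1+n)^r|c_n|$. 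Passing from Fourier-coefficient decay to uniform best-approximation decay in $C_\vphi$ requires controlling the partial-sum operators in $\|\cdot\|_{C_\vphi}$, which costs an extra $\log m$ and would only yield $H:Z_r(\vphi)\to Z_{r'}(\vphi)$ for $r'<r+1$, not the sharp $r'=r+1$ that drives the claim. The differentiation identity avoids this detour entirely.
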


\begin{proof} We have
\begin{eqnarray}\label{Hbound}\vgdfy|(Hf)(y)|
&\leq & \C \|f\vv\|_\infty \vgdfy\int_{-1}^1 |h(x,y)|u^{-1}(x)dx,
\end{eqnarray}
and, by \eqref{h-cond1}, we deduce the continuity of the operator $H:C_{\vv}\to\C_{\vgdf}$.

We observe that the compactness of $H:C_{\vv} \to  Z_{r-1}(\vgd \vphi)$ can be proved  if the following estimate holds  
\begin{equation}\label{OmegaH}
\frac{\Omega_{\vphi}(Hf,t)_{\vgdf}}{t^s}\leq \C \|f\|_{C_{\vv}}
\end{equation}
for some $s>r-1$.
Indeed, by using the weak-Jackson inequality \cite[Th. 8.2.1]{DT} together with \eqref{OmegaH}, we get
\begin{equation}\label{EmHf}E_m(Hf)_{\vgdf}\leq \frac{\C}{m^s} \|f\|_{C_{\vv}},\end{equation}
and, combining \eqref{Hbound} and \eqref{EmHf} with \eqref{norm1} and \eqref{norm2} we get the con\-ti\-nui\-ty of \ $H:C_{\vv} \ $ $\to  Z_{r-1}(\vgdf)$, $1\leq r\leq s+1$.
Moreover, by (\ref{Em-est-Zr}) and (\ref{EmHf}), we obtain
\begin{eqnarray*}
E_m(H f)_{Z_{r-1}(\vgdf)}&\leq& \C \sup_m m^{r-1} E_m(Hf)_{\vgdf}\leq \frac{\C}{m^{s-r+1}} \|f\|_{C_{\vv}}, \quad s>r-1,  
\end{eqnarray*}
and, therefore,
by \cite[p. 44]{Timan} it follows  that $H :C_{\vv}\to Z_{r-1}(\vgdf)$ is compact. \\
So, it remains to prove \eqref{OmegaH}. By the assumption \eqref{h-cond2}, we have
\begin{eqnarray*}&&\|\vgdf \Delta_{\tau\vphi} Hf\|_{I_{\tau}}= \frac 1\pi\sup_{y\in I_{\tau}} \vgdfy \left|\int_{-1}^1 \Delta_{\tau\vphi(y)}h (x,y) f(x)\v(x)dx\right|\\
&\leq & \C \|f\vv\|_\infty \sup_{y\in I_{\tau}}  \vgdfy\int_{-1}^1 |\Delta_{\tau\vphi(y)} h(x,y)|u^{-1}(x)dx \leq  \C \|f\|_{C_{\vv}}  \tau^s.
\end{eqnarray*}
Thus, by  definition of $\Omega_\vphi$,
\eqref{OmegaH} follows.

The case $\al=\frac 12$ and $h(x,y)=\log|x-y|$ is special since $\frac{d}{dy}(Hf)(y)=-(A^\vphi f)(y)$. The proof can be deduced following \cite[Proof of Theorem 2.2]{MT2}.

\end{proof}

\begin{proof}[Proof of Theorem \ref{teo-sigma0}]
The theorem follows by Corollary \ref{corolInv}, Lemmas \ref{K-compact-1},\ref{H-compact-1} and the Fredholm alternative Theorem (see, for instance, \cite[Cor. 3.8]{Kress}).
\end{proof}

\subsection{Proofs of Theorems \ref{stability-convergence} and \ref{condiz}}
In order to prove the theorems, we need the following lemmas.
\begin{lemma}\label{(K-Km)fm} Let $0< \al<1$. If, for some $s>0$ and $\ga,\de$ satisfying \eqref{ipotesitheo3.2}, the kernel $k$ satisfies \eqref{ky-cond},
then, for every $1\leq r<s+1$,
{\small{\[\|(K-L_m^{w}K_m)f\|_{Z_{r-1}(\vgdf)}\leq \C \|f\|_{C_{\vv}}\sup_{|x|\leq 1}\|k_x\|_{Z_{s}(\vgdf)} \frac{\log m}{m^{s-r+1}}, \ \C\neq \C(m,f,k).\]}}
\end{lemma}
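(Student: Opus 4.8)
The plan is to estimate $K - L_m^{w}K_m$ by inserting the polynomial $K_m f$ as an intermediate term and splitting
\[
\|(K - L_m^{w}K_m)f\|_{Z_{r-1}(\vgdf)} \leq \|(K - K_m)f\|_{Z_{r-1}(\vgdf)} + \|(I - L_m^{w})K_m f\|_{Z_{r-1}(\vgdf)}.
\]
For the second term I would exploit that $K_m f \in \PP_{m-1}$, so $(I-L_m^{w})K_m f$ can be controlled by the difference $(I-L_m^{w})(K_m f - Kf)$ together with $(I-L_m^{w})Kf$; since $Kf \in Z_s(\vgdf)$ by Lemma \ref{K-compact-1} and the interpolation error estimate \eqref{immZ} gives $\|(I-L_m^{w})Kf\|_{Z_{r-1}(\vgdf)} \leq \C m^{-(s-r+1)}\|Kf\|_{Z_s(\vgdf)}\log m$, this piece behaves as required once we recall the continuity bound $\|Kf\|_{Z_s(\vgdf)} \leq \C\|f\|_{C_{\vv}}\sup_{|x|\le 1}\|k_x\|_{Z_s(\vgdf)}$ from \eqref{EmKf-est-1} and \eqref{Kbound1}. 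The boundedness $\|L_m^w G\vgdf\|_\infty \leq \C\log m\,\|G\vgdf\|_\infty$ from \eqref{lag1alal} (valid under \eqref{ipotesitheo3.2}, which is contained in \eqref{gade-cond3}) then also absorbs the logarithmic factor when we pass through $L_m^w$ in the Zygmund norm.

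For the genuinely new term $\|(K-K_m)f\|_{Z_{r-1}(\vgdf)}$, the point is that $K_m$ is obtained from $K$ by replacing $k_y$ with its Lagrange interpolant $L_m^{\rho}(k_y,\cdot)$, so
\[
((K-K_m)f)(y) = \frac 1\pi\int_{-1}^1 \bigl(k_y(x) - L_m^{\rho}(k_y,x)\bigr)(f\v)(x)\,dx.
\]
I would bound this pointwise in $y$ using Lemma \ref{Nevai} (whose hypotheses follow from \eqref{ipotesitheo3.2} since there $\ga < -\al/2 + 1/2 < -\al/2 + 3/4$ and $\de < \al/2 < \al/2 + 1/4$), which gives $\int_{-1}^1 |L_m^{\rho}(G,x)|u^{-1}(x)\,dx \leq \C\|G\|_\infty$; combined with the trivial $\int_{-1}^1 |k_y(x)|u^{-1}(x)\,dx$ estimate and a best-approximation argument — replacing $k_y$ by its best approximating polynomial $P^*_{m}(k_y)$ in $C_{\vgdf}$, which is reproduced by $L_m^\rho$ — one gets $\vgdfy|((K-K_m)f)(y)| \leq \C\|f\|_{C_{\vv}}\sup_{|x|\le 1}E_{m-1}(k_x)_{\vgdf}$, hence $\leq \C\|f\|_{C_{\vv}}\sup_{|x|\le 1}\|k_x\|_{Z_s(\vgdf)}m^{-s}$ by Favard's inequality \eqref{Favard}. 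To upgrade this uniform-norm estimate to the $Z_{r-1}(\vgdf)$-norm I would instead estimate $E_j((K-K_m)f)_{\vgdf}$ for each $j$: for $j \geq m$ use the bound just derived, while for $j < m$ note $E_j((K-K_m)f)_{\vgdf} \leq E_j(Kf)_{\vgdf} + E_j(K_m f)_{\vgdf}$ and each $K_m$-term is again reduced via Lemma \ref{Nevai} and the smoothness of $k_x$; then feed the sequence $\{E_j((K-K_m)f)_{\vgdf}\}_j$ into the norm equivalence \eqref{norm2} and \eqref{Em-est-Zr}, which converts the decay rate $m^{-s}$ into the claimed $m^{-(s-r+1)}$ (with the extra $\log m$ coming from the $L_m^w$ in front, via \eqref{lag1alal}).

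The main obstacle I expect is the careful bookkeeping in the Zygmund norm rather than the uniform norm: one cannot simply take suprema of pointwise bounds, and must instead argue through the equivalent characterization \eqref{norm2} in terms of weighted best-approximation errors $E_j(\cdot)_{\vgdf}$, handling the ranges $j < m$ and $j \geq m$ separately and keeping track of how the operator $L_m^w$ acts on a Zygmund-norm ball (where \eqref{immZ} and the Lebesgue-type bound \eqref{lag1alal} are the two essential ingredients). A secondary technical care point is verifying that the kernel-approximation quantity appearing after applying Lemma \ref{Nevai} is uniform in $y$, which is exactly what assumption \eqref{ky-cond} guarantees through $\sup_{|x|\le 1}\|k_x\|_{Z_s(\vgdf)} < \infty$. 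Once these are in place, collecting the two contributions yields the stated estimate with constant independent of $m$, $f$ and $k$.
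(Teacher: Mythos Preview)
There is a genuine gap, rooted in a confusion between the two variables. First, $K_mf\notin\PP_{m-1}$: by \eqref{Km-def}, $K_m$ interpolates the kernel in the \emph{integration variable} $x$, so $(K_mf)(y)$ is a linear combination of the functions $y\mapsto k(t_i,y)$ and is not a polynomial in $y$ (were it one, $(I-L_m^w)K_mf$ would vanish and your further splitting would be superfluous). More importantly, your bound on $(K-K_m)f$ tries to control the $x$-interpolation error $k_y-L_m^\rho(k_y,\cdot)$ via ``the best approximating polynomial $P_m^*(k_y)$ in $C_{\vgdf}$'' and arrives at $\sup_{|x|\le 1}E_{m-1}(k_x)_{\vgdf}$; but $k_y$ is a function of $x$, while $C_{\vgdf}$ and $E_{m-1}(k_x)_{\vgdf}$ refer to approximation \emph{as functions of $y$}. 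Hypothesis \eqref{ky-cond} gives smoothness in $y$ only and says nothing about regularity in $x$. For a separable kernel $k(x,y)=g(x)h(y)$ with $g$ merely continuous and $h$ analytic, \eqref{ky-cond} holds for every $s>0$, yet $((K-K_m)f)(y)=h(y)\cdot\frac1\pi\int_{-1}^1(g-L_m^\rho g)f\rho\,dx$, and for $f\equiv 1$ the integral is the Gaussian-quadrature error for $g$, which decays only at the rate dictated by the smoothness of $g$; your claimed pointwise bound therefore fails.

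The paper does not split into $(K-K_m)$ and $(I-L_m^w)K_m$. It invokes Lemma~\ref{maky_lem} to produce a single bivariate approximant $P_m(x,y)$ that is a polynomial in $y$ (with piecewise-constant coefficients in $x$), writes $R=k-P_m$, and uses $K^{P_m}-\tilde K_m^{P_m}=0$ to reduce $\|(K-L_m^wK_m)f\,\vgdf\|_\infty$ to $\|K^Rf\,\vgdf\|_\infty+\|\tilde K_m^Rf\,\vgdf\|_\infty$. Lemmas~\ref{Lag1} and~\ref{Nevai} then bound each piece by $\C\log m\,\|f\|_{C_{\vv}}\sup_{|x|\le1}E_m(k_x)_{\vgdf}$, using only the $y$-smoothness from \eqref{ky-cond}; the passage to the $Z_{r-1}(\vgdf)$-norm is done through \eqref{norm2}. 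The essential device you are missing is precisely Lemma~\ref{maky_lem}, which handles the two interpolations (in $x$ and in $y$) simultaneously without ever requiring $x$-regularity of the kernel.
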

\begin{proof} By definitions of $K$ and $K_m$ we have
{\small{\[(Kf)(y)\!-\!L_m^{w}(K_mf)(y)\!\!=\!\!\frac 1\pi \left[\int_{-1}^1\! \!k(x,y)(f\v)(x)dx-\!L_m^{w}\left(\!\int_{-1}^1 \! L_m^{\rho}(k_y,x)(f\v)(x)dx,y \right)\right].\]}}
In what follows for any $a: [-1,1]^2\to \RR$, we set
{\small{\[(K^af)(y)\!\!=\!\! \frac 1\pi\int_{-1}^1 \!\! a(x,y)(f\v)(x)dx, (\!\tilde K_m^a f)(y)\!=\! \frac 1\pi
 L_m^{w}\left(\int_{-1}^1 L_m^{\rho}\left(\!a(\cdot,y),x\right)(f\v)(x)dx, y\!\! \right).\]}}
Let $\{P_m\}_m$ be the sequence of polynomials defined in Lemma \ref{maky_lem}. Since
$K^{P_m}-\tilde K_m^{P_m}=0$, then
 for $R(x,y)=k(x,y)-P_m(x,y)$ we get
\begin{equation*}\|\vgdf (K-L_m^{w}K_m)f \|_\infty \leq \|\vgdf K^R f \|_\infty +\|\vgdf \tilde K_m^R f \|_\infty.\end{equation*}
By (\ref{Kbound1}), we obtain
\begin{eqnarray*}\|\vgdf K^R f \|_\infty &\leq & \C\|f\|_{C_{\vv}} \sup_{|x|\leq 1} \|R_x \vgdf\|_\infty,\end{eqnarray*}
and, by Lemmas \ref{Lag1} and \ref{Nevai}, we deduce
\begin{eqnarray*}
\|\vgdf \tilde K_m^R f \|_\infty &\leq &\C \log m \ \|f\|_{C_{\vv}} \sup_{|y|\leq 1}\vgdfy\int_{-1}^1 |L_m^{\rho}\left(R_y,x\right)|u^{-1}(x)dx \nonumber \\
&\leq & \C \log m \  \|f\|_{C_{\vv}} \sup_{|y|\leq 1} \vgdfy \|R_y\|_\infty.
\end{eqnarray*}
Thus, by Lemma \ref{maky_lem},
\begin{eqnarray*}
\|\vgdf(K-L_m^{w}K_m)f \|_\infty &\leq&  \C \log m\  \|f\|_{C_{\vv}} \sup_{x, y\in [-1,1]} \vgdfy|k(x,y)-P_m(x,y)| \\
&\leq & \C \log m \  \|f\|_{C_{\vv}} \sup_{|x|\leq 1} E_m(k_x)_{\vgdf}.
\end{eqnarray*}
Finally, in virtue of the assumption \eqref{ky-cond} on $k$, using \eqref{Favard} we obtain
\begin{equation}\label{K-Kmf-inf}\|\vgdf (K-L_m^{w}K_m)f \|_\infty \leq \C  \|f\|_{C_{\vv}} \sup_{|x|\leq 1}\|k_x\|_{Z_{s}(\vgdf)} \frac{\log m}{m^s}, \quad \C=\C(s). \end{equation}
Now, taking into account the equivalence (\ref{norm2}), we get
\begin{eqnarray*}
\|\vgdf(K-L_m^{w}K_m)f \|_{Z_{r-1}(\vgdf)}&\leq &\C \|\vgdf (K-L_m^{w}K_m)f \|_\infty \\
&+& \C \sup_m m^{r-1} \|\vgdf (K-L_m^{w}K_m)f \|_\infty, \, \C=\C(r,s),
\end{eqnarray*}
and  the thesis follows combining last estimate with  (\ref{K-Kmf-inf}).
\end{proof}
\begin{lemma}\label{(H-Hm)fm} Let $0< \al<1$. If, for some $s>0$ and $\ga,\de$ satisfying $(\ref{gade-cond3})$, the kernel $h$ satisfies \eqref{h-cond1}-\eqref{h-cond2},
then, for every $1\leq r<s+1$,
\[\|(H-L_m^{w}H)f\|_{Z_{r-1}(\vgdf)}\leq \C \|f\|_{C_{\vv}} \frac{\log m}{m^{s-r+1}},\quad \C\neq \C(m,f).\]
\end{lemma}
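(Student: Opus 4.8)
The plan is to mimic the structure of the proof of Lemma \ref{(K-Km)fm}, since the quantity $(H-L_m^{w}H)f$ has an analogous structure but with the weakly singular kernel $h$ in place of the smooth kernel $k$ and, crucially, \emph{without} an inner discretization of the integral (the operator $L_m^{w}H$ applies only the outer interpolation at the zeros of $p_m^w$). First I would write, for $R(x,y)$ now denoting a suitable error in approximating $h$, the splitting
\[\|\vgdf (H-L_m^{w}H)f\|_\infty \leq \|\vgdf (I-L_m^{w})(Hf)\|_\infty,\]
and estimate $\|\vgdf (I-L_m^{w})(Hf)\|_\infty$ by the standard Lagrange error bound together with $E_m(Hf)_{\vgdf}$. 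Concretely, by Lemma \ref{Lag1} (the optimality of $L_m^{w}$ under \eqref{gade-cond3}, which is implied by the hypothesis here) one has the de la Vall\'ee-Poussin type estimate $\|(I-L_m^{w})(Hf)\vgdf\|_\infty \leq \C\log m\, E_m(Hf)_{\vgdf}$.

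Next I would invoke the mapping properties already established in Lemma \ref{H-compact-1}: under \eqref{h-cond1}--\eqref{h-cond2}, the estimate \eqref{EmHf} gives $E_m(Hf)_{\vgdf}\leq \C m^{-s}\|f\|_{C_{\vv}}$. Combining this with the previous display yields the uniform-norm bound
\[\|\vgdf (H-L_m^{w}H)f\|_\infty \leq \C\,\|f\|_{C_{\vv}}\,\frac{\log m}{m^{s}}, \quad \C\neq\C(m,f).\]
Then, exactly as at the end of the proof of Lemma \ref{(K-Km)fm}, I would pass from the uniform norm to the Zygmund norm $Z_{r-1}(\vgdf)$ using the equivalence \eqref{norm2}: since $(H-L_m^{w}H)f\in\PP_{?}$... more carefully, since $E_j\big((H-L_m^{w}H)f\big)_{\vgdf}\leq \C\,\|f\|_{C_{\vv}}\,\frac{\log m}{m^{s}}$ uniformly in $j$ (being bounded by the full uniform norm), one gets
\[\|(H-L_m^{w}H)f\|_{Z_{r-1}(\vgdf)}\leq \C\Big(1+\sup_{j\geq 1}j^{r-1}\Big)\|\vgdf (H-L_m^{w}H)f\|_\infty,\]
but this naive bound only controls indices $j\lesssim m$, which is precisely what is needed: for $j>m$ one instead uses $E_j\big((H-L_m^{w}H)f\big)_{\vgdf}\leq E_j(Hf)_{\vgdf}+E_j(L_m^{w}Hf)_{\vgdf}=E_j(Hf)_{\vgdf}\leq \C m^{-s}\|f\|_{C_\vv}$ wait, for $j>m$ one has $L_m^w Hf\in\PP_{m-1}\subset\PP_j$ so $E_j(L_m^w Hf)_{\vgdf}=0$ and $E_j((H-L_m^wH)f)_{\vgdf}=E_j(Hf)_{\vgdf}\leq \C j^{-s}\|f\|_{C_\vv}$, so $\sup_{j> m} j^{r-1}E_j((H-L_m^wH)f)_{\vgdf}\leq \C m^{r-1-s}\|f\|_{C_\vv}$, while for $1\le j\le m$, $j^{r-1}E_j(\cdot)_{\vgdf}\leq m^{r-1}\cdot\C\frac{\log m}{m^s}\|f\|_{C_\vv}$. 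In both ranges the bound is $\C\frac{\log m}{m^{s-r+1}}\|f\|_{C_\vv}$, which together with \eqref{norm2} gives the claim.

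The main obstacle is the bookkeeping in the final step: one must split the supremum over the degree index $j$ at $j\sim m$ and treat the two regimes separately (for small $j$ using the uniform bound with the extra factor $m^{r-1}$, for large $j$ using that $L_m^{w}Hf$ is annihilated by $E_j$ so that only $E_j(Hf)_{\vgdf}\leq \C j^{-s}\|f\|_{C_{\vv}}$ survives). A secondary point to handle with care is ensuring the hypotheses of Lemma \ref{H-compact-1} and Lemma \ref{Lag1} are both in force: the assumption here is stated with \eqref{gade-cond3}, which is more restrictive than the $\ga,\de<1$ needed for Lemma \ref{H-compact-1} and suffices for the optimality \eqref{lag1alal} of $L_m^{w}$; so no additional work is required there. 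Everything else is routine, parallel to Lemma \ref{(K-Km)fm}.
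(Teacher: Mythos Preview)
Your proposal is correct and follows essentially the same route as the paper: bound $\|\vgdf(H-L_m^{w}H)f\|_\infty$ by $\C\log m\,E_{m-1}(Hf)_{\vgdf}$ via Lemma~\ref{Lag1}, insert \eqref{EmHf}, then lift to the Zygmund norm using \eqref{norm2}. In fact your final step is more careful than the paper's, which simply writes ``Using the equivalence \eqref{norm2} and \eqref{H-Hmfest} the theorem follows''; your explicit splitting of $\sup_{j}j^{r-1}E_j\big((H-L_m^{w}H)f\big)_{\vgdf}$ at $j=m$ (using $L_m^{w}Hf\in\PP_{m-1}$ for $j>m$ and the uniform bound for $j\le m$) is exactly what makes that passage rigorous.
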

\begin{proof} By Lemma \ref{Lag1} and using \eqref{EmHf} we get
\begin{eqnarray}\label{H-Hmfest}
\|\vgdf(H-L_m^{w}H)f\|_\infty \leq \C \log m E_{m-1}(Hf)_{\vgdf} \leq \frac{\log m}{m^s} \|f\|_{C_{\vv}}.
\end{eqnarray}
Using the equivalence (\ref{norm2}) and (\ref{H-Hmfest}) the theorem follows. 
\end{proof}

\begin{proof}  [ Proof of Theorem \ref{stability-convergence}.]  We \ first \ note \ that, by \ Corollary \ref{corolInv}, \ $g\in Z_s(\vgdf)$ \ implies \ $[DA^{\rho}]^{-1}g\in Z_{s+1}(\vv)$ \ and, \ by \ Lemmas \ \ref{K-compact-1} \ and \ \ref{H-compact-1}, \
$[DA^{\rho}]^{-1}Kf, $ $[DA^{\rho}]^{-1}Hf$ $\in Z_{s+1}(\vv)$ for any $f\in C_{\vv},$ then \\
$$f=-[DA^{\rho}]^{-1}Kf-[DA^{\rho}]^{-1}Hf+[DA^{\rho}]^{-1}g\in Z_{s+1}(\vv)$$
too.

Since by Lemmas \ref{(K-Km)fm} and \ref{(H-Hm)fm} we can choose $m$ sufficiently large (say $m>m_0$) such that
\[\|(DA^{\rho}+K+H)^{-1}[(L_m^{w}K_m-K)+(L_m^{w}H-H)]\|_{Z_{r}(\vgdal)\to Z_r(\vgdal)}<1,\]
then, using a well-known result (see, for example, \cite[Theorem 10.1, p. 142]{Kress}), the inverse operators  $(DA^{\rho}+L_m^{w}K_m+L_m^{w}H)^{-1} : Z_{r-1}(\vgdf)\to Z_r(\vgdal)$ exist and are uniformly bounded w.r.t. $m$, i.e.
\begin{equation}\label{invBound}\sup_{m\geq m_0} \|(DA^{\rho}+L_m^{w}K_m+L_m^{w}H)^{-1}\|_{Z_{r-1}(\vgdf)\to Z_r(\vgdal)}<+\infty.\end{equation}
In order to prove (\ref{convergence}), we use the following identity
\begin{eqnarray}\label{rel2}(f-f_m)=(DA^{\rho}+L_m^{w}K_m+L_m^{w}H)^{-1}\!\!\!& &\left[(g-L_m^{w} g)-(L_m^{w}K_m-K)f\right.\nonumber \\
& &\left. -(L_m^{w}H-H)f\right].
\end{eqnarray}
Since by \eqref{immZ} and the assumptions on $g$ it is
$$\|g-L_m^{w} g\|_{Z_{r-1}(\vgdf)}\leq \C \frac{\log m}{m^{s-r+1}}\|g\|_{Z_s(\vgdf)},$$
using \eqref{invBound} and Lemmas $\ref{(K-Km)fm}$ and \ref{(H-Hm)fm}, we get
\begin{equation}
\label{rel22}\|f-f_m\|_{Z_r(u\rho)}\leq \C \frac{\log m}{m^{s-r+1}}\left[\|g\|_{Z_s(\vgdf)} + \|f\|_{Z_{s+1}(u\rho)}\right].
\end{equation}
On the other hand, since $g=(DA^{\rho}+K+H)^{-1} f$, by Theorem \ref{teo-sigma0},
\begin{equation}\label{gest}
\|g\|_{Z_s(u\vphi)}\leq \C \|f\|_{Z_{s+1}(u\rho)}.
\end{equation}
Combining \eqref{gest} and \eqref{rel22}, (\ref{convergence}) follows.
\end{proof}

\begin{proof}[Proof of Theorem \ref{condiz}]
Let  $P\in\PP_{m-1},$ $m>1$. Using \cite[Th. 3.1]{russoLuther} with $w_1=\vv$ and $w_2=u$, we get
\begin{eqnarray*}\|A^\rho P\|_{C_u}&\leq &\C \|P\|_{C_{\vv}} + \C \int_0^1 \frac{\Omega_\vphi^k(P,t)_{\vv}}t dt\\ &=&
\|P\|_{C_{\vv}} + \C \left\{\int_0^{\frac 1 m}+ \int_{\frac 1 m}^1\right\} \frac{\Omega_\vphi^k(P,t)_{\vv}}t dt.\end{eqnarray*}
Applying
$\Omega_\vphi^k(P,t)_{\vv}\le \C t \|P'\varphi \vv\|_\infty$ and the Bernstein inequality \cite[Th. 8.4.7]{DT}
 in the first integral  and $\Omega_\vphi^k(P,t)_{\vv}\le \C \|P \vv\|_\infty$ in the second one,   it is easy to deduce that
\[\|A^\rho P\|_{C_u}\leq \C \log m \|P\|_{C_{\vv}}. \]
Using the above inequality together with the Bernstein inequality \cite[Th. 8.4.7]{DT} we get
\begin{equation}\label{DAP-est}\|DA^{\rho}P\|_{C_{\vgdf}}\leq  \C  m \log m \| P\|_{C_{\vv}}.\end{equation}
Then, taking into account \eqref{Dap} and (see, for example, \cite{CaCriJuLu})
\begin{equation}\label{DAm1pm}(DA^{\rho})^{-1} p_m^{w}=\frac 1{m+1}p_m^{\rho},\end{equation}
the operator $DA^{\rho}:(\PP_{m-1}, \|\cdot\|_{C_{\vv}})\to (\PP_{m-1}, \|\cdot\|_{C_{\vgdf}})$ is continuous and invertible. Consequently, the operator $(DA^{\rho}+L_m^{w}K_m+L_m^{w}H):(\PP_{m-1}, \|\cdot\|_{C_{\vv}})\to (\PP_{m-1}, \|\cdot\|_{C_{\vgdf}})$ is continuous and invertible too and its inverse is bounded (see, for example, \cite[Theorem 3.4]{Kress}).

Now, for every $\mathbf{\theta}=(\theta_1, \ldots,\theta_m)^T$ there exists
$\mathbf{\eta}=(\eta_1,\ldots,\eta_m)^T$ such that $\mathbf{A}_m\mathbf{\theta}=\mathbf{\eta}$ iff
 $(DA^{\rho}+L_m^{w}K_m+L_m^{w}H)\tilde {\mathbf{\theta}}(y)= \tilde {\mathbf{\eta}}(y),$ where
\[\tilde{\mathbf{\theta}}(y)=\sum_{i=1}^m\psi_i^{\rho}(y)\theta_i, \quad \tilde
\eta(y)=\sum_{i=1}^m \psi^{w}_i(y)\eta_i,\]
with $\theta_i=(\vv\tilde \theta)(t_i)$ and $\eta_i=(u\varphi\tilde \eta)(x_i).$

Then, for every $\theta$ we have\\
\noindent $\|\mathbf{A}_m\theta\|_{\infty}= \|\eta\|_{\infty}=|\eta_\nu |=|(\tilde\eta v^{\ga,\be}\varphi)(x_\nu)|\leq
\|\tilde \eta\|_{C_{\vgdf}} $
\begin{eqnarray*}
\hphantom{H}&=&\|(DA^{\rho}+L_m^{w}K_m+L_m^{w}H)\tilde \theta\|_{C_{\vgdf}}\\
&\leq& \|DA^{\rho}\tilde \theta\|_{C_{\vgdf}}+\|L_m^{w}K_m\tilde \theta\|_{C_{\vgdf}}+\|L_m^{w}H\tilde \theta\|_{C_{\vgdf}} \\
&=:& N_1+N_2+N_3,
\end{eqnarray*}
where
$\displaystyle |\eta_\nu|=\max_{1\leq i\leq n}|\eta_i|.$ Using \eqref{DAP-est} and \eqref{lagal1al}, we get
\begin{eqnarray}\label{N1}N_1&\leq & \C m \log^2 m\|\theta\|_{\infty}.\end{eqnarray}
Moreover, by \eqref{lag1alal}, the definition \eqref{Km-def} of $K_m \tilde \theta$, Lemma \ref{Nevai}, the assumption \eqref{ky-cond} and \eqref{lagal1al}, we deduce
\begin{eqnarray*}
N_2&\leq &\C \log m \|K_m \tilde \theta\|_{C_{\vgdf}}\\
&\leq & \C \log m \|\tilde \theta\|_{C_{\vgdf}}\sup_{|y|\leq 1}(\vgdf)(y) \int_{-1}^1|L_m^{\rho}(k_y,x)|u^{-1}(x)dx \\
&\leq & \C \log m \|\tilde \theta\|_{C_{\vgdf}}\sup_{|y|\leq 1}(\vgdf)(y) \|k_y\|_\infty\\
&= & \C \log m \|\tilde \theta\|_{C_{\vgdf}}\sup_{|x|\leq 1}\|k_x \vgdf\|_\infty\\
&\leq & \C \log^2 m \|\theta\|_{C_{\vgdf}}.
\end{eqnarray*}
Finally, by \eqref{lag1alal}, the definition of $H \tilde \theta$, the assumption \eqref{h-cond1} and \eqref{lagal1al}, we have
\begin{eqnarray*}
N_3&\leq &\C \log m \|H \tilde \theta\|_{C_{\vgdf}}\\ &\leq &\C \log m \|\tilde \theta\|_{C_{\vgdf}}\sup_{|y|\leq 1}(\vgdf)(y) \int_{-1}^1|h(x,y)|u^{-1}(x)dx\\ &\leq &\C \log^2 m \|\theta\|_{\infty}.
\end{eqnarray*}
Summing up, we get
\begin{eqnarray}\label{normAA}
\|\mathbf{A}_m\|_{\infty}&\leq &\C m \log^2 m
\end{eqnarray}
Similarly proceeding, for every $\eta$, using \eqref{lag1alal}, we get
\begin{eqnarray*}
\|\mathbf{A}_m^{-1}\eta\|_{\infty} 
&\leq&   \C \left\|\left((DA^{\rho}+L_m^{w}K_m+L_m^{w}H)_{\ |\PP_{m-1}}\right)^{-1}\right\|_{C_{\vgdf}\to C_{\vv}}
\|\tilde \eta\|_{C_{\vgdf}} \\
&\leq&   \C \log m \left\|\left((DA^{\rho}+L_m^{w}K_m+L_m^{w}H)_{\ |\PP_{m-1}}\right)^{-1}\right\|_{C_{\vgdf}\to C_{\vv}}
\|\eta\|_{\infty}
\end{eqnarray*}
and, then,
\begin{eqnarray}\label{norminvA}
\|\mathbf{A}_m^{-1}\|_{\infty}&\leq &\C \log m \left\|\left((DA^{\rho}+L_m^{w}K_m+L_m^{w}H)_{\ |\PP_{m-1}}\right)^{-1}\right\|_{C_{\vgdf}\to C_{\vv}}.
\end{eqnarray}
Combining \eqref{normAA} and \eqref{norminvA}, the theorem follows.
\end{proof}

\subsection{Proof of Theorem \ref{teo-sigma}}

\begin{lemma}\label{M-compact-1} Under the assumptions $0\leq  \ga <  \frac 12, \, 0\leq \de < \frac{1}2,$
and if $\sigma\vphi\in Z_r$ with $r\geq 0$,
then the multiplying operator $M_{\sigma\vphi}:Z_{r}(\vgdf)\to Z_{r}(\vgdf)$ is continuous. Moreover, if $\sigma\vphi\in Z_r$ with $r\geq 1$, then  $M_{\sigma\vphi}:Z_{r}(\vgdf)\to Z_{r-1}(\vgd \vphi)$  is compact.
\end{lemma}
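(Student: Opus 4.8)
The statement asserts two facts about the multiplication operator $M_{\sigma\vphi}$: continuity from $Z_r(\vgdf)$ to itself when $\sigma\vphi\in Z_r$ with $r\ge 0$, and compactness from $Z_r(\vgdf)$ to $Z_{r-1}(\vgd\vphi)$ when $\sigma\vphi\in Z_r$ with $r\ge 1$. The natural strategy is to reduce everything to estimates on the errors of best polynomial approximation through the equivalence \eqref{norm2}, and to exploit the fact that the product of two polynomials is a polynomial (so that approximating both $f$ and $\sigma\vphi$ well forces $M_{\sigma\vphi}f$ to be well approximated).

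For the continuity part, I would first record the trivial pointwise bound $\|(M_{\sigma\vphi}f)\vgdf\|_\infty=\|(\sigma\vphi)(f\vgdf)\|_\infty\le \|\sigma\vphi\|_\infty\|f\vgdf\|_\infty$, noting $\sigma\vphi\in Z_r\subset C([-1,1])$ so $\|\sigma\vphi\|_\infty<\infty$. The main work is to control $E_m(M_{\sigma\vphi}f)_{\vgdf}$. I would take $P_m\in\PP_m$ a near-best approximant of $f$ in $C_{\vgdf}$ and $q_m\in\PP_m$ a near-best approximant of $\sigma\vphi$ in $C=Z_0(v^{0,0})$, form the product $P_mq_m\in\PP_{2m}$, and write
\[
(\sigma\vphi f - P_mq_m)=(\sigma\vphi)(f-P_m)+P_m(\sigma\vphi-q_m),
\]
so that, weighting by $\vgdf$ and using $\|P_m\vgdf\|_\infty\le\|f\vgdf\|_\infty+E_m(f)_{\vgdf}\le\C\|f\|_{Z_r(\vgdf)}$,
\[
E_{2m}(M_{\sigma\vphi}f)_{\vgdf}\le \|\sigma\vphi\|_\infty E_m(f)_{\vgdf}+\C\|f\|_{Z_r(\vgdf)}E_m(\sigma\vphi).
\]
Invoking the Favard inequality \eqref{Favard} (applied to $\sigma\vphi\in Z_r$ with the weight $v^{0,0}$) and \eqref{norm2} for $f$, both terms are $O(m^{-r}\|f\|_{Z_r(\vgdf)})$; combined with the pointwise bound and the norm equivalence \eqref{norm2} this yields $\|M_{\sigma\vphi}f\|_{Z_r(\vgdf)}\le\C\|f\|_{Z_r(\vgdf)}$, i.e. continuity. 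Here I would also need the hypotheses $0\le\ga<\tfrac12$, $0\le\de<\tfrac12$ to ensure $f\vgdf$ is a legitimate weighted-sup object and the constants in \eqref{norm2}, \eqref{Favard} apply; these are exactly the Pollaczek-type restrictions under which the moduli-of-smoothness machinery is valid.

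For the compactness part, observe that by what was just proved $M_{\sigma\vphi}:Z_r(\vgdf)\to Z_r(\vgdf)$ is continuous when $\sigma\vphi\in Z_r$, $r\ge 1$. It then suffices to show that the inclusion $Z_r(\vgdf)\hookrightarrow Z_{r-1}(\vgd\vphi)$ factors through a compact embedding, or more directly, that $M_{\sigma\vphi}$ maps bounded sets of $Z_r(\vgdf)$ into relatively compact sets of $Z_{r-1}(\vgd\vphi)$. The cleanest route is the one used already in Lemma \ref{K-compact-1} and Lemma \ref{H-compact-1}: from the estimate $E_m(M_{\sigma\vphi}f)_{\vgdf}\le\C m^{-r}\|f\|_{Z_r(\vgdf)}$ together with \eqref{Em-est-Zr} one gets
\[
E_m(M_{\sigma\vphi}f)_{Z_{r-1}(\vgdf)}\le\C\sup_{k\ge1}k^{r-1}E_k(M_{\sigma\vphi}f)_{\vgdf}\le\frac{\C}{m}\|f\|_{Z_r(\vgdf)},
\]
so $M_{\sigma\vphi}$ can be uniformly approximated in operator norm $Z_r(\vgdf)\to Z_{r-1}(\vgdf)$ by the finite-rank operators $f\mapsto L^{\vphi}_m(M_{\sigma\vphi}f)$ (or $f\mapsto\Pi_m M_{\sigma\vphi}f$ for a suitable polynomial projector $\Pi_m$), and a uniform limit of finite-rank operators is compact (as in \cite[p. 44]{Timan}). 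The one genuine subtlety to get right is the change of weight from $\vgdf$ to $\vgd\vphi$: since $\vphi$ vanishes at $\pm1$, passing to the weight $\vgd\vphi$ is a gain, and I would handle it by noting $E_m(M_{\sigma\vphi}f)_{\vgd\vphi}\le\C E_m(M_{\sigma\vphi}f)_{\vgdf}$ is not automatic — rather one uses that for polynomials the extra $\vphi$ factor is controlled via a Bernstein-type bound — alternatively, and more simply, I would prove compactness directly into $Z_{r-1}(\vgdf)$ and then use $\|g\|_{Z_{r-1}(\vgd\vphi)}\le\C\|g\|_{Z_{r-1}(\vgdf)}$ (which holds since $\vphi\le1$) to land in the slightly larger target space. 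The main obstacle is therefore bookkeeping the weights correctly and checking that the hypotheses $\ga,\de<\tfrac12$ suffice for all the invoked inequalities; the functional-analytic content (finite-rank approximation $\Rightarrow$ compact) is routine once the quantitative estimate $E_m(M_{\sigma\vphi}f)_{Z_{r-1}(\vgdf)}=O(m^{-1})$ is in hand.
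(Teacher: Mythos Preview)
Your proposal is correct and, for the continuity part, essentially identical to the paper's: both use the trivial bound $\|(\sigma\vphi)f\,u\vphi\|_\infty\le\|\sigma\vphi\|_\infty\|fu\vphi\|_\infty$ together with the product decomposition $\sigma\vphi f-P_mq_m=(\sigma\vphi)(f-P_m)+P_m(\sigma\vphi-q_m)$ (the paper writes it with degree $\lfloor m/2\rfloor$ pieces, you with degree $m$ pieces and conclude for $E_{2m}$; these are equivalent), then invoke Favard and \eqref{norm2}.

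For compactness the two arguments diverge slightly. The paper takes the shortest route: once $M_{\sigma\vphi}:Z_r(u\vphi)\to Z_r(u\vphi)$ is bounded, it simply composes with the compact embedding $Z_r(u\vphi)\hookrightarrow Z_{r-1}(u\vphi)$ already recorded in \eqref{immZ}, and is done in one line. You instead reprove that compactness by finite-rank approximation through \eqref{Em-est-Zr}, which is also valid (and mirrors the paper's own argument in Lemma~\ref{H-compact-1}) but longer. Your route has the advantage of being self-contained; the paper's buys brevity by reusing an embedding established earlier. One notational point: your worry about the ``change of weight from $u\vphi$ to $u\vphi$'' is spurious, since in this paper $\vgdf$ and $\vgd\vphi$ both abbreviate $u\vphi$; there is no weight change, and the discussion of $\vphi\le1$ and Bernstein-type control can be dropped.
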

\begin{proof}
Since
\begin{equation}
\label{Msf-bound}
\|(M_{\sigma\vphi}f)\vgd\vphi\|_\infty=\|\sigma\vphi f\vgd\vphi\|_\infty\leq \|\sigma\vphi\|_\infty \|f\vgd\vphi\|_\infty
\end{equation}
and, by standard computation and the Favard inequality \eqref{Favard}, denoting by $\lfloor \!a\!\rfloor$ the greatest integer smaller or equal to $\!a\!>0,$ we have

{\small{\begin{equation}\label{EmM-est}
E_m(M_{\sigma\vphi}f)_{\vgd\vphi}\!\leq \! \|\sigma\vphi\|_\infty\! E_{\left\lfloor\frac m2\right\rfloor}(f)_{\vgd\vphi}\!+\! 2 \|f\vgd\vphi\|_\infty \! E_{\left\lfloor\frac m2\right\rfloor}(\sigma\vphi)
\!\leq \! \frac{\C}{m^r}\|f\|_{Z_r(\vgd\vphi)}\! \|\sigma\vphi\|_{Z_r},
\end{equation}}}
\hspace*{-1mm}it follows $\|M_{\sigma\vphi}f\|_{Z_r(\vgd\vphi)}\!=\!\|(M_{\sigma\vphi}f)\vgd\vphi\|_\infty\!+\! \sup_m \! m^r E_m(M_{\sigma\vphi}f)_{\!\vgd\vphi\!}\!<\! \C  \|f\|_{Z_r(\vgd\vphi)},$\\
i.e., the operator $M_{\sigma\vphi}:Z_{r}(\vgdf)\to Z_{r}(\vgdf)$ is continuous. Then $M_{\sigma\vphi}:Z_{r}(\vgdf)\to Z_{r-1}(\vgd \vphi)$ is compact, since $Z_{r}(\vgdf)$ is compactly imbedded into $Z_{r-1}(\vgd \vphi)$ (see \eqref{immZ}).
\end{proof}

\begin{proof}[Proof of Theorem \ref{teo-sigma}] The theorem follows by Corollary \ref{corolInv}, Lemmas \ref{K-compact-1}, \ref{H-compact-1} and \ref{M-compact-1} and the Fredholm alternative Theorem.
\end{proof}

\subsection{Proof of Theorems \ref{stability-convergence2} and \ref{condiz2}}

\begin{lemma}\label{ConvM} If, for some $s>0$, $0\leq \ga < \frac{1}4$ and $0\leq \de < \frac{1}4$, we have $\sigma\vphi\in Z_s$,
then, for every $1\leq r<s+1$,
\[\|(M_{\sigma\vphi}-L_m^{\vphi}M_{\sigma\vphi})f\|_{Z_{r-1}(\vgdf)}\leq \C \|f\|_{Z_s(\vgdf)}\|\sigma\vphi\|_{Z_{s}} \frac{\log m}{m^{s-r+1}}, \quad \C\neq \C(m,f,\sigma).\]
\end{lemma}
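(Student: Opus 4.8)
The plan is to bound the error $\|(M_{\sigma\vphi}-L_m^{\vphi}M_{\sigma\vphi})f\|_{Z_{r-1}(\vgdf)}$ by combining the boundedness of the Lagrange operator $L_m^{\vphi}$ in $C_{\vgdf}$ with an estimate of the best polynomial approximation error $E_{m-1}(M_{\sigma\vphi}f)_{\vgdf}$. First I would observe that the hypotheses $0\le\ga<\frac14$ and $0\le\de<\frac14$ fall within the range $(\ref{gade-cond3})$ of Lemma \ref{Lag1} with $\al=\frac12$ (since then $-\frac\al2+\frac14=0$, $-\frac\al2+\frac54=1$, $\frac\al2-\frac14=0$, $\frac\al2+\frac34=1$), so that $\|L_m^{\vphi}(g)\vgdf\|_\infty\le\C\log m\,\|g\vgdf\|_\infty$ for all $g\in C_{\vgdf}$. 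Applying this with $g=M_{\sigma\vphi}f-Q_{m-1}$ for $Q_{m-1}$ the polynomial of best approximation to $M_{\sigma\vphi}f$ in $C_{\vgdf}$ (which is fixed by $L_m^{\vphi}$), I get the standard Lebesgue-type bound
\[
\|(M_{\sigma\vphi}-L_m^{\vphi}M_{\sigma\vphi})f\|_{C_{\vgdf}}\le\C\log m\,E_{m-1}(M_{\sigma\vphi}f)_{\vgdf}.
\]

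Next I would invoke the approximation estimate \eqref{EmM-est} already established inside the proof of Lemma \ref{M-compact-1}, namely $E_m(M_{\sigma\vphi}f)_{\vgdf}\le\frac{\C}{m^s}\|f\|_{Z_s(\vgdf)}\|\sigma\vphi\|_{Z_s}$, valid because $\sigma\vphi\in Z_s$ and $\ga,\de<\frac12$. Combining this with the Lebesgue bound yields
\[
\|(M_{\sigma\vphi}-L_m^{\vphi}M_{\sigma\vphi})f\|_{C_{\vgdf}}\le\C\,\frac{\log m}{m^s}\|f\|_{Z_s(\vgdf)}\|\sigma\vphi\|_{Z_s}.
\]
This handles the sup-norm part of the $Z_{r-1}(\vgdf)$ norm. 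It remains to control $\sup_{i\ge0}(1+i)^{r-1}E_i\big((M_{\sigma\vphi}-L_m^{\vphi}M_{\sigma\vphi})f\big)_{\vgdf}$, using the equivalence \eqref{norm2}. For $i\ge m$ one can bound $E_i(\cdot)_{\vgdf}$ by the sup norm just estimated; for $i<m$ one uses that $(M_{\sigma\vphi}-L_m^{\vphi}M_{\sigma\vphi})f$ differs from $M_{\sigma\vphi}f$ by a polynomial of degree $\le m-1$, combined again with the bound on $E_{m-1}(M_{\sigma\vphi}f)_{\vgdf}$ and on $\|L_m^{\vphi}\|$; multiplying by $(1+i)^{r-1}\le m^{r-1}$ produces the factor $m^{r-1}$ and hence the claimed rate $\frac{\log m}{m^{s-r+1}}$. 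This is exactly the pattern already used in the proofs of Lemmas \ref{(K-Km)fm} and \ref{(H-Hm)fm}, and I would mimic it.

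The main obstacle — or rather the one step requiring care — is the passage from the sup-norm estimate to the full $Z_{r-1}(\vgdf)$-norm estimate, i.e. controlling the modulus-of-smoothness / best-approximation tail of $(M_{\sigma\vphi}-L_m^{\vphi}M_{\sigma\vphi})f$ uniformly in $m$. One must split the supremum over $i$ at $i\sim m$ and argue that the low-degree part contributes no worse than $m^{r-1}$ times the sup norm, which is where the restriction $r<s+1$ enters to keep $m^{r-1-s}\log m$ bounded. I expect this to be routine given the equivalence \eqref{norm2} and the cited arguments, so the proof should be short: essentially ``apply Lemma \ref{Lag1} with $\al=\frac12$, invoke \eqref{EmM-est}, and then pass to the Zygmund norm via \eqref{norm2} as in Lemma \ref{(H-Hm)fm}.''
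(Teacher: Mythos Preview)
Your proposal is correct and follows essentially the same route as the paper: apply Lemma~\ref{Lag1} with $\al=\frac12$ to get the Lebesgue-type bound in $C_{\vgdf}$, feed in \eqref{EmM-est} with $r=s$, and then upgrade to the $Z_{r-1}(\vgdf)$ norm via the equivalence \eqref{norm2} exactly as in Lemmas~\ref{(K-Km)fm} and~\ref{(H-Hm)fm}. In fact you spell out the splitting of $\sup_i(1+i)^{r-1}E_i(\cdot)_{\vgdf}$ at $i\sim m$ more carefully than the paper, which simply writes ``by equivalence \eqref{norm2}, the lemma follows.''
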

\begin{proof} By Lemma \ref{Lag1} for $\al=\frac 12$, under the assumptions on $\ga,\de$
\[\|\vgdf(M_{\sigma\vphi}-L_m^{\vphi}M_{\sigma\vphi})f\|_\infty\leq \C \log m E_m(M_{\sigma\vphi} f)_{\vgdf}.\]
Since $\sigma\vphi\in Z_s$, by \eqref{EmM-est} with $r=s$, we obtain
\[\|\vgdf(M_{\sigma\vphi}-L_m^{\vphi}M_{\sigma\vphi})f\|_\infty\leq \C \frac{\log m}{m^s} \|f\|_{Z_s(\vgd\vphi)} \|\sigma\vphi\|_{Z_s}.\]
Finally, by equivalence \eqref{norm2}, the lemma follows.
\end{proof}

\begin{proof}[Proof of Theorem \ref{stability-convergence2}]
Taking into account Lemma \ref{ConvM} the proof is similar to that of Theorem \ref{stability-convergence}.
\end{proof}
\begin{proof}[Proof of Theorem \ref{condiz2}] The proof is similar to that of Theorem \ref{condiz}
taking into account that, by \eqref{lag1alal}, \eqref{Msf-bound} and \eqref{lagal1al}, we get
\begin{eqnarray*}
\|L_m^{\vphi} M_{\sigma\vphi}\tilde \theta\|_{C_{\vgdf}} &\leq& \C \log m \|M_{\sigma\vphi}\tilde \theta\|_{C_{\vgdf}}\leq \C \log m \|\tilde \theta\|_{C_{\vgdf}}\\
&\leq & \C \log^2 m \|\theta\|_{\infty}.
\end{eqnarray*}
\end{proof}

\subsection{Proof of Proposition \ref{remark1}}
We prove (\ref{xi-value}) for  $h(x,y)=|x-y|^\mu$, since the other cases similarly follows.

\noindent Setting $\varphi_1(x)=\sqrt{1-|x|},$ by $\frac{\varphi(y)}{\sqrt{2}}\le \varphi_1(y)\le \varphi(y)$ it follows $\Omega_\varphi\sim\Omega_{\varphi_1}$ \cite{DT}. Assume at first $y\in [-1+4\tau^2,0]$ and consider the following decomposition
\begin{eqnarray*}
&&(u\varphi)(y)\int_{-1}^1 u^{-1}(x) |\Delta_{\tau\varphi_1(y)}k_x(y) |dx =(u\varphi)(y)\times \\
&\times &\left(\int_{-1}^{-1+\frac{1+y}2}+
\int_{-1+\frac{1+y}2}^{y-\tau \varphi_1(y)}+\int_{y-\tau \varphi_1(y)}^y+\int_y^{y+\tau\varphi_1(y)}+\int_{y+\tau\varphi_1(y)}^{y+\frac{1+y}2}+
\int_{y+\frac{1+y}2}^1\right)\\ && u^{-1}(x) |\Delta_{\tau\varphi_1(y)}k_x(y) |dx=:\sum_{k=1}^6 S_k(y).\end{eqnarray*}
Since  for $x<y-\frac {\tau} 2 \varphi_1(y)$ (\cite[(13.5.3)]{LMR})
\begin{equation}\label{uno}|\Delta_{\tau\varphi_1(y)}k_x(y)|\le \tau\varphi_1(y)\left(y-\frac \tau 2 \varphi_1(y)-x\right)^{\mu-1}, \end{equation}
and  for $y\in [-1+4\tau^2,0],$ by $\mu-1<0$,
$\left(y-\frac \tau 2 \varphi_1(y)-x\right)^{\mu-1}\le $ $\left( \frac{1+y}4\right)^{\mu-1},$  we have
\begin{equation*}
S_1(y)\le \C \tau (1+y)^{\de+\mu}\int_{-1}^{-1+\frac{1+y}2}(1+x)^{-\de}dx\le
\C \tau(1+y)^{\mu+1}\le \C \tau,\end{equation*}
being  $\mu+1>0$. By (\ref{uno}) again
\begin{eqnarray*}S_2(y)\le \C \tau (1+y)^{\de+1}\int_{-1+\frac{1+y}2}^{y-\tau \varphi_1(y)}\left(y-\frac \tau 2 \varphi_1(y)-x\right)^{\mu-1}(1+x)^{-\de}dx.
\end{eqnarray*}
Then, by $(1+x)\ge \frac{1+y}2$ and  setting $y-x=u\sqrt{1+y},$ it follows
\begin{eqnarray*}S_2(y)\le \! \C \tau (1+y)^{1+\frac\mu 2} \int_{\tau}^{\frac{\sqrt{1+y}}2}\!\!\!\left(\!u-\!\frac {\tau} 2\!\right)^{\mu-1}\!\!\!dx\!=\!\C \tau \frac{(1+y)^{1+\frac\mu 2}}\mu\! \!\left[\!\left (\!\frac{\sqrt{1+y}}2\!-\!\frac {\tau} 2\!\right)^{\mu}\!-\!\tau^{\mu}\!\right]\end{eqnarray*}
and, using $\frac{\sqrt{1+y}}2\ge \tau$, we can conclude
$
S_2(y)\le \C \tau^{1+\mu}.$ 
Similar  estimates hold for $S_5$ and $S_6$.
To estimate $S_3$ we use
$|\Delta_{\tau\varphi_1(y)}k_x(y)|\le \left|y-\frac {\tau} 2 \varphi_1(y)-x\right|^{\mu}$
and by $1+x\sim 1+y$,
\begin{eqnarray*}S_3(y)\le \C (1+y)^{\frac 1 2}\int_{y-\tau \varphi_1(y)}^y \left|y-\frac {\tau} 2 \varphi_1(y)-x\right|^{\mu}dx\end{eqnarray*} and by the change of variable $y-x-\frac {\tau} 2 \sqrt{1+y}=\theta$ it follows
\begin{eqnarray*}S_3(y)\le \C (1+y)^{\frac 1 2}\int_{-\frac \tau 2 \sqrt{1+y}}^{\frac \tau 2 \sqrt{1+y}}|\theta|^\mu d\theta =
\C \tau^{\mu+1} (1+y)^{\frac 3 2+\mu}\le \C \tau^{\mu+1} .\end{eqnarray*}
Similarly we estimate  $S_4$ by using $|\Delta_{\tau\varphi_1(y)}k_x(y)|\le \left|y+\frac \tau 2 \varphi_1(y)-x\right|^{\mu},$
and the lemma is proved for $y\in [-1+4\tau^2,0]$. We omit the proof in the case $y\in [0,1-4\tau^2]$, since it follows by similar arguments.

\section{Conclusions}
In this paper we have proposed a numerical scheme based on Lagrange projection  for solving integral equations of the kinds  (\ref{eqSigma0}) and (\ref{eqSigma}).  The approximate solution has been obtained by solving a system of algebraic equations, whose conditioning has been studied.
Stability and convergence have been proved, giving estimates of the errors in Zygmund norm. We have illustrated various aspects of the theory by means of some examples,  evaluating  the efficiency of the proposed scheme from different points of view. In the first test we have   compared our results  with those reached by the  procedure proposed in \cite{Calio}, by showing that our method faster converges. Examples 2 and 3 have been devoted especially to test the agreement of the predicted orders of convergence with the numerical $EOCs$, choosing for this goal functions of different smoothness. In both examples numerical evidence shows also that the condition numbers  of the linear systems diverge at most like  $m \log^3 m$. This fact encourages us to believe that the norms in \eqref{cond1} and \eqref{cond2} do not increase w.r.t. $m$. Moreover, in Example 2 we have compared the condition numbers of the linear systems of our procedures  with those of the procedure  in \cite{CaCriJuLu}, showing the  substantial different behaviors between them (see Table \ref{tab:b22}). The better conditioning  in our procedure is ascribable to the choice of the basis to represent the Lagrange polynomials (see  \cite{La,DBSIam}).

Finally we have considered the application of our method in solving  the  Prandtl's equation governing the circulation air flow   along the contour of a plane wing profile,
for two different wing-shapes. Also in these cases we have shown that our experimental results are more accurate  than  those obtained in \cite{dragos1}, \cite{dragos2}, highlighting  once again the efficiency of our  approach.

\subsection*{Acknowledgment} This research was supported by University of Basilicata (local funds) and by GNCS Project 2019 ``Discretizzazione di misure, approssimazione di operatori integrali ed applicazioni''.


\bibliography{bibDeBonisOccorsio}

{\small{\it Maria Carmela De Bonis, {Department of Mathematics, Computer Science and Economics, University of Basilicata, Via dell'Ateneo Lucano 10, 85100 Potenza, ITALY.} mariacarmela.debonis@unibas.it.

\vspace{0.5cm}

Donatella Occorsio {Department of Mathematics, Computer Science and Economics, University of Basilicata, Via dell'Ateneo
Lucano 10, 85100 Potenza, ITALY. \\ donatella.occorsio@unibas.it.}}}

\end{document}